\theoremstyle{plain}
\newtheorem{thm}{Theorem}
\newtheorem{prop}[thm]{Proposition}
\newtheorem{cor}[thm]{Corollary}
\newtheorem{lem}[thm]{Lemma}
\theoremstyle{remark}
\newtheorem{rem}[thm]{Remark}
\newtheorem{ex}[thm]{Example}
\theoremstyle{definition}
\newtheorem{defi}[thm]{Definition}
\def\N{\mathbb{N}}
\def\Z{\mathbb{Z}}
\def\cA{\mathcal{A}}
\def\cB{\mathcal{B}}
\def\cC{\mathcal{C}}
\def\cD{\mathcal{D}}
\def\cE{\mathcal{E}}
\def\cG{\mathcal{G}}
\def\cL{\mathcal{L}}
\def\cR{\mathcal{R}}
\def\cS{\mathfrak{S}}
\def\cT{\mathcal{T}}
\def\eps{\varepsilon}
\def\bsigma{{\boldsymbol{\sigma}}}
\DeclareMathOperator{\Card}{Card}
\DeclareMathOperator{\Suff}{Suff}
\DeclareMathOperator{\dom}{dom}
\DeclareMathOperator{\diag}{diag}
\DeclareDocumentCommand{\longdash}{}{%
  \mathrel{\relbar\mkern-6.5mu\relbar\mkern-6.5mu\relbar}}
\title{$S$-adic characterization of minimal dendric shifts}
\author{France Gheeraert\affiliationmark{1}\thanks{Supported by a FNRS fellowship during this work.} \and Julien Leroy\affiliationmark{2}}
\affiliation{%
LAMFA, Université de Picardie Jules Verne, Amiens, France\\
UR Mathematics, University of Liège, Liège, Belgium}
\keywords{Dendric shifts, $S$-adic, substitution, Sturmian shifts, Interval exchange transformations, Arnoux-Rauzy shifts.}
\begin{document}

\publicationdata{vol. 27:2}{2025}{2}{10.46298/dmtcs.13130}{2024-02-27; 2024-02-27; 2024-11-07}{2025-01-10}

\maketitle

\begin{abstract}
Dendric shifts are defined by combinatorial restrictions of the extensions of the words in their languages. 
This family generalizes well-known families of shifts such as Sturmian shifts, Arnoux-Rauzy shifts and codings of interval exchange transformations.
It is known that any minimal dendric shift has a primitive $\mathcal{S}$-adic representation where the morphisms in $\mathcal{S}$ are positive tame automorphisms of the free group generated by the alphabet.
In this paper, we present an $\mathcal{S}$-adic characterization of this family using two finite graphs.
As an application, we are able to decide whether a shift space generated by a uniformly recurrent morphic word is (eventually) dendric.
\end{abstract}

\section{Introduction}
Dendric shifts are defined in terms of extension graphs that describe the left and right extensions of their factors. 
Extension graphs are bipartite graphs that can roughly be described as follows: if $u$ is a word in the language $\cL(X)$ of the shift space $X$, one puts an edge between the left and right copies of letters $a$ and $b$ such that $aub$ is in $\cL(X)$. 
A shift space is then said to be dendric if the extension graph of every word of its language is a tree.
These shift spaces were initially defined through their languages under the name of tree sets~\citep{acyclic} and were studied in a series of papers.
They generalize classical families of shift spaces such as Sturmian shifts~\citep{morse_hedlund}, Arnoux-Rauzy shifts~\citep{arnoux_rauzy}, codings of regular interval exchange transformations (IET)~\citep{oseledec,arnold} , and shift spaces arising from the application of the Cassaigne multidimensional continued fraction algorithm (MCF)~\citep{Cassaigne_Labbe_Leroy}.

Minimal dendric shifts exhibit striking combinatorial~\citep{bifix_decoding,rigidity}, algebraic~\citep{acyclic,finite_index} , and ergodic properties~\citep{dimension_group}. 
For instance, they have factor complexity $\#(\cL(X) \cap \cA^n) = (\#\cA-1)n+1$~\citep{acyclic} and topological rank $\#\cA$~\citep{dimension_group}, where $\cA$ is the alphabet of the shift space. 
They also fall into the class of eventually dendric shift spaces, which are exactly those satisfying the regular bispecial condition~\citep{Dolce_Perrin:2021}. This implies that the number of their ergodic measures is at most $\#\cA/2$~\citep{damron_fickenscher:2022}.
An important property for our work is that the derived shift of a minimal dendric shift is again a minimal dendric shift on the same alphabet, where derivation is here understood as derivation by return words.
This allows to give $\cS$-adic representations of such shift spaces~\citep{Ferenczi:1996}, i.e., to define a set $\cS$ of endomorphisms of the free monoid $\cA^*$ and a sequence $\bsigma = (\sigma_n)_{n \geq 0} \in \cS^\N$, called an $\cS$-adic representation, such that 
\[
    X = \{x \in \cA^{\Z} \mid u \in \cL(x) \Rightarrow \exists n\in \N, a\in \cA: u \in \cL(\sigma_0 \sigma_2 \cdots \sigma_n(a))\}.
\]

$\cS$-adic representations are a classical tool that allows to study several properties of shift spaces such as the factor complexity~\citep{Durand_Leroy_Richomme,donoso_durand_maass_petite}, the number of ergodic measures~\citep{berthe_delecroix,bedaride_hilion_lustig_1,bedaride_hilion_lustig_2}, the dimension group and topological rank~\citep{dimension_group} or yet the automorphism group~\citep{espinoza_maass}. 
In the case of minimal dendric shifts, the involved endomorphisms are particular tame automorphisms of the free group generated by the alphabet~\citep{bifix_decoding,rigidity}. 
This in particular allows to prove that minimal dendric shifts have topological rank equal to the cardinality of the alphabet and that ergodic measures are completely determined by the measures of the letter cylinders~\citep{dimension_group,bedaride_hilion_lustig_2}.

In the case of the ternary alphabet $\{1,2,3\}$, we were able to give an $\cS$-adic characterization of minimal dendric shifts~\citep{Gheeraert_Lejeune_Leroy:2021}.
We explicitly define an infinite set $\cS_3$ of positive automorphisms of the free group $F_3$ and a labeled directed graph $\cG$ with two vertices and labels in $\Sigma_3 \cS_3 \Sigma_3$ (where $\Sigma_3$ is the symmetric group of $\{1,2,3\}$) such that a shift space over $\{1,2,3\}$ is a minimal dendric shift if and only if it has a primitive $\Sigma_3 \cS_3 \Sigma_3$-adic representation labelling an infinite path in $\cG$.
We were then able to localize in $\cG$ classical families of dendric shifts such as codings of regular interval exchanges or Arnoux-Rauzy shifts.

In this paper we extend this work to any alphabet as follows.

\begin{restatable}{thm}{mainThm}\label{T:main}
Let $\cS$ be a family of dendric return morphisms from $\cA^*$ to $\cA^*$ and let $X$ be a shift space having an $\cS$-adic representation $\bsigma = (\sigma_n)_{n \geq 0}$.  
Then $X$ is minimal dendric if and only if $\bsigma$ is primitive and labels infinite paths in the graphs $\cG^L(\cS)$ and $\cG^R(\cS)$.
\end{restatable}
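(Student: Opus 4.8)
The plan is to establish the two implications separately, in both cases exploiting the correspondence between the directive sequence $\bsigma$ and the associated sequence of derived shifts $(X^{(n)})_{n\ge 0}$: since each $\sigma_n$ is a return morphism, $X^{(n)}$ is the shift generated by the return words defining $\sigma_n$ inside $X^{(n-1)}$ (with $X^{(0)}=X$), and $\sigma_{[0,n)}:=\sigma_0\cdots\sigma_{n-1}$ maps $X^{(n)}$ onto $X$. Everything reduces to one local statement: a description of how, for each fixed $n$, the extension graph in $\cL(X^{(n-1)})$ of an image $\sigma_{n-1}(w)$ (and of the words occurring strictly inside the letter images $\sigma_{n-1}(a)$) is obtained from the extension graph of $w$ in $\cL(X^{(n)})$. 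The edges of $\cG^L(\cS)$ and $\cG^R(\cS)$ are precisely the record of this transformation restricted to left, respectively right, extensions, and the vertices carry exactly the finite data — the left (resp. right) ``extension type'' of the relevant bispecial words, together with the first (resp. last) letters forced by the return-word structure — needed to iterate it.

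For the forward implication, assume $X$ is minimal dendric. By the stability result recalled in the introduction, every $X^{(n)}$ is again a minimal dendric shift on the \emph{full} alphabet $\cA$; minimality of the $X^{(n)}$ on $\cA$ then yields, by a standard argument, that every letter occurs in $\sigma_{[n,m)}(b)$ once $m-n$ is large enough, i.e.\ $\bsigma$ is primitive. Moreover, because $X^{(n)}$ is dendric its left extension type $v^L_n$ is a well-defined vertex of $\cG^L(\cS)$, and the local transformation lemma, read forwards, says exactly that $v^L_n\to v^L_{n+1}$ is an edge labelled $\sigma_n$; hence $(v^L_n)_n$ is an infinite path in $\cG^L(\cS)$, and symmetrically $(v^R_n)_n$ is an infinite path in $\cG^R(\cS)$.

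For the converse, assume $\bsigma$ is primitive and labels an infinite path in each of the two graphs. Primitivity gives minimality of $X$ by the classical criterion, the $\sigma_n$ being non-erasing. To prove dendricity it suffices to show that $\cE_X(w)$ is a tree for every bispecial $w\in\cL(X)$ (for non-bispecial $w$ the extension graph is a single edge or a star). Since $\sigma_{[0,n)}$ comes from return words, such a $w$ desubstitutes: there is a level $n$ and a word $w'\in\cL(X^{(n)})$ — a bispecial word or a letter — such that $\cE_X(w)$ is the image of $\cE_{X^{(n)}}(w')$ under the transformation attached to $\sigma_{[0,n)}$, which factors as the composition of the elementary transformations attached to $\sigma_0,\dots,\sigma_{n-1}$. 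One then argues by induction along the two given paths: membership of $\bsigma$ in an infinite path of $\cG^R(\cS)$ guarantees that no elementary transformation introduces a cycle passing through a right vertex, membership in a path of $\cG^L(\cS)$ does the same on the left, and connectedness is preserved throughout because return morphisms are left- or right-proper; hence $\cE_X(w)$ is acyclic and connected, i.e.\ a tree. The finitely many degenerate cases ($w$ a letter or $w=\varepsilon$) are matched directly against the designated initial vertices of the two graphs.

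The main obstacle is the local transformation lemma itself: over an arbitrary alphabet one must describe explicitly how the extension graph of $\sigma_{n-1}(w)$ and of the internal factors of the $\sigma_{n-1}(a)$ is assembled from that of $w$, keeping careful track of the letters that the return-word structure forces to the left and to the right, and then isolate from this description the \emph{finite} amount of information that has to be remembered — this is what makes $\cG^L(\cS)$ and $\cG^R(\cS)$ finite graphs with well-defined transitions. Once that lemma is available, both directions amount to transporting it along $\bsigma$, and the split into a ``left'' graph and a ``right'' graph is exactly what keeps the acyclicity bookkeeping manageable.
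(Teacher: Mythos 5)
Your forward direction follows the paper's outline, but it glosses over a real step: the vertices of $\cG^L(\cS)$ are \emph{trees} on $\cA$, whereas the natural invariant attached to the dendric shift $X_\bsigma^{(n)}$ is the multi-clique $G^L(X_\bsigma^{(n)})$, which is acyclic for the coloring and connected but in general not a tree (distinct left special factors can share extensions). So the ``left extension type'' is not a single well-defined vertex of $\cG^L(\cS)$; one must choose a covering tree $T_0$ of $G^L(X)$ and then prove that covering trees $T_n$ of each $G^L(X_\bsigma^{(n)})$ can be chosen coherently so that every $(T_n,\sigma_n,T_{n+1})$ is still a valid triplet. This is the content of Proposition~\ref{P:from graphs to trees}, whose proof (splitting a clique $C_i$ into $D\cup E$ with $\Card(D\cap E)=1$ and tracking all the images $\varphi^L_{\sigma,s'}$) is not a formality.

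The more serious gap is in the converse. Your ``induction along the two given paths'' has no base case: for a factor $u\in\cL(X)$ with $\sigma_k$-initial antecedent $v\in\cL(X_\bsigma^{(k)})$, you would need to know that the tree $T_{k+1}$ labeling the path at level $k+1$ is a covering subgraph of $G^L(X_\bsigma^{(k+1)})$ — but $X_\bsigma^{(k+1)}$ is not yet known to be dendric (that is what is being proved), so $G^L(X_\bsigma^{(k+1)})$ is not even available, and nothing ties the abstract trees of the path to the actual shifts. The paper resolves this by a substitution trick: since $v$ is $\sigma_k$-initial, its extension graph depends only on $\sigma_k$, so one may replace $X_\bsigma^{(k+1)}$ by an Arnoux-Rauzy shift $Y$ and work with $Z=\sigma_k\cdot Y$; because $G^L(Y)$ is the complete graph, \emph{any} tree, in particular $T_{k+1}$, is a covering subgraph of it, and Lemma~\ref{L:graph containing valid triplet} then propagates both dendricity and the covering-subgraph property down through the $k+1$ valid triplets to level $0$. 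Your proposal contains nothing playing this role, and the assertion that ``connectedness is preserved throughout because return morphisms are left- or right-proper'' is not correct as stated: the connectedness that matters is that of the subgraphs $G^L_{\sigma,s}$ for $s\in\cT^L(\sigma)$, which is precisely condition (2) of a valid triplet and does not follow from properness of the morphism.
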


Observe that, contrary to the ternary case, we are not able to explicitely define the set $\cS$.
We also consider two finite graphs instead of one. 
A key argument in the ternary case is that for all $N$, the $\cS$-adic shift $X^{(N)}$ generated by $(\sigma_n)_{n \geq N}$ is minimal and dendric, so that $\sigma_N$ preserves dendricity from $X^{(N+1)}$.
The finiteness of $\cG$ was obtained by defining an equivalence relation with finite index on the set of dendric shifts such that two shifts $X,Y$ are equivalent if and only if the same morphisms of $\Sigma_3 \cS_3 \Sigma_3$ preserve dendricity from $X$ and from $Y$.
The vertices of $\cG$ are the two equivalent classes of this relation.

In the general case, while we are not able to explicitly define $\cS$, we have enough information on the morphisms in $\cS$ to similarly define an equivalence relation characterizing how dendricity is preserved.
It is defined by means of two finite graphs describing the left and right extensions of infinite special factors of $X$ (see Section~\ref{S:definition of G^L and G^R}).

The paper is organized as follows.
We start by giving, in Section~\ref{S:definitions}, the basic definitions for the study of shift spaces. We introduce the notion of extension graph of a word and of a dendric shift.
In Section~\ref{S:image under return morphism}, we recall and extend some results of~\citep{Gheeraert_Lejeune_Leroy:2021} about the stability of dendricity when taking the image under a return morphism.

In Section~\ref{S:definition of G^L and G^R}, we introduce new graphs associated with a shift space. They provide another characterization of dendric shift spaces. We also study the link with eventual dendricity in Section~\ref{S:stabilization for eventually dendric}.

After that, in Section~\ref{S:image of graphs}, we reformulate the results of Section~\ref{S:image under return morphism} and study the image under a return morphism using these graphs. We then use these results to obtain an $S$-adic characterization of dendric shift spaces (Theorem~\ref{T:main}) in Section~\ref{S:S-adic characterization}. We illustrate our results by giving an explicit graph characterizing the minimal dendric shift spaces over four letters having exactly one right special factor of each length. We also show that (eventual) dendricity, and the corresponding threshold, is decidable for substitutive shift spaces.

Finally, in Section~\ref{S:interval exchanges}, we focus on the sub-family of interval exchanges and provide an $\cS$-adic characterization (Theorem~\ref{T:iet in G}) using a subgraph of the graph obtained in the dendric case.

\section{Definitions}\label{S:definitions}

\subsection{Words, languages and shift spaces}

Let $\cA$ be a finite alphabet of cardinality $d \geq 2$ (note that some of the results do not work for unary alphabets).
Let us denote by $\varepsilon$ the empty word of the free monoid $\cA^*$ (endowed with concatenation), by $\cA^+$ the set of non-empty finite words on $\cA$ and by $\cA^{\Z}$ the set of bi-infinite words over $\cA$.
For a word  $w= w_{1} \cdots w_{\ell} \in \cA^\ell$, its  {\em length} is denoted $|w|$ and equals $\ell$.
We say that a word $u$ is a {\em factor} of a word $w$ if there exist words $p,s$ such that $w = pus$.
If $p = \varepsilon$ (resp., $s = \varepsilon$) we say that $u$ is a {\em prefix} (resp., {\em suffix}) of $w$.
For a word  $u \in  \cA^{*}$ and a word $w \in \cA^* \cup \cA^\Z$, an index $j$ such that $w_{j}\cdots w_{j+|u|-1} = u$ is called an {\em occurrence} of $u$ in $w$.

The set  $\cA^{\Z}$ endowed with the product topology of the discrete topology on each copy of $\cA$ is topologically a Cantor set. 
The {\em shift map}  $S$ defined by $S \left( (x_n)_{n \in \mathbb{Z}} \right) = (x_{n+1})_{n \in \mathbb{Z}}$ is a homeomorphism of  $\cA^{\Z}$. 
A {\em shift space} is a pair $(X,S)$ where $X$ is a closed shift-invariant subset of some $\cA^{\Z}$.
It is thus a {\em topological dynamical system}.
It is {\em minimal} if the only closed shift-invariant subset $Y \subseteq X$ are $\emptyset$ and $X$.
Equivalently, $(X,S)$ is minimal if and only if the orbit of every $x \in X$ is dense in $X$.
Usually we say that the set $X$ is itself a shift space.

The {\em language} of a sequence $x \in \cA^{\Z}$ is its set of factors and is denoted $\cL(x)$. 
For a shift space $X$, its {\em language} $\cL(X)$ is $\bigcup_{x\in X} \cL(x)$ and we set $\cL_n(X) = \cL(X) \cap \cA^n$, $n \in \N$, and $\cL_{\leq N}(X) = \bigcup_{n \leq N} \cL_n(X)$.
Its {\em factor complexity} is the function $p_X:\N \to \N$ defined by $p_X(n) = \Card \cL_n(X)$.
We say that a shift space $X$ is {\em over} $\cA$ if $\cL_1(X) = \cA$.

\subsection{Extension graphs and dendric shifts}

Dendric shifts are  defined with respect to combinatorial properties of their language expressed in terms of extension graphs. 
Let $F$ be a set of finite words on the alphabet $\cA$ which is factorial, i.e., if $u \in F$ and $v$ is a factor of $u$, then $v \in F$.
For $w \in F$, we define the sets of left, right and bi-extensions of $w$ by
\begin{align*}
	E_F^L(w) &= \{ a \in \cA \mid aw \in F\};	\\
	E_F^R(w) &= \{ b \in \cA \mid wb \in F\};	\\
	E_F(w) &= \{ (a,b) \in \cA \times \cA \mid awb \in F\}.		
\end{align*}
The elements of $E_F^L(w)$, $E_F^R(w)$ and $E_F^L(w)$ are respectively called the {\em left extensions}, the {\em right extensions} and the {\em bi-extensions} of $w$ in $F$. If $F$ is the language of a shift space $X$, we instead use the terminology \emph{extensions in $X$} and the index $F$ is replaced by $X$ or even omitted if the context is clear.
Observe that as $X \subseteq \cA^\Z$, the set $E_X(w)$ completely determines $E_X^L(w)$ and $E_X^R(w)$.  
A word $w$ is said to be {\em right special} (resp., {\em left special}) if $\Card(E^R(w))\geq 2$ (resp., $\Card(E^L(w)) \geq 2$). 
It is {\em bispecial} if it is both left and right special.
The factor complexity of a shift space is completely governed by the extensions of its special factors.
In particular, we have the following result.

\begin{prop}[Cassaigne and Nicolas~\citep{CANT_cassaigne}]
\label{P:complexity}
Let $X$ be a shift space. 
For all $n$, we have 
\begin{align*}
	p_X(n+1)-p_X(n) 
	&= \sum_{w \in \cL_n(X)} (\Card(E^R(w))-1)	\\
	&= \sum_{w \in \cL_n(X)} (\Card(E^L(w))-1).
\end{align*}
In addition, if for every bispecial factor $w \in \cL(X)$, one has 
\begin{equation}
\label{eq:bilateral multiplicity}
	\Card(E(w)) - \Card(E^L(w)) - \Card(E^R(w)) + 1 = 0,
\end{equation}
then $p_X(n) = (p_X(1)-1)n +1$ for every $n$. 
\end{prop}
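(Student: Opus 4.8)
The plan is to treat the two assertions of the proposition separately: the displayed identities are a pure bookkeeping computation that needs no hypothesis, and the final equality $p_X(n)=(p_X(1)-1)n+1$ is a short telescoping argument built on top of them.

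For the first line, I would set up the bijection sending a word $u\in\cL_{n+1}(X)$ to the pair $(w,b)$, where $w$ is the length-$n$ prefix of $u$ and $b$ its last letter; its image is exactly $\bigsqcup_{w\in\cL_n(X)}E^R(w)$. The one thing to check is that every $w\in\cL_n(X)$ really occurs as a prefix of some length-$(n+1)$ word, which holds because $w$ is a factor of some point of $X$ and hence has at least one right extension. This gives $p_X(n+1)=\sum_{w\in\cL_n(X)}\Card(E^R(w))$, and subtracting $p_X(n)=\sum_{w\in\cL_n(X)}1$ produces the first equality; the second is identical with suffixes and left extensions in place of prefixes and right extensions.

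For the last claim, the key preliminary remark I would make is that the bilateral multiplicity $m(w):=\Card(E(w))-\Card(E^L(w))-\Card(E^R(w))+1$ in fact vanishes for \emph{every} $w\in\cL(X)$, not only the bispecial ones: if $w$ is not left special and $a$ is its unique left extension, then $awb\in\cL(X)\iff wb\in\cL(X)$, so $\Card(E(w))=\Card(E^R(w))$ and $\Card(E^L(w))=1$, giving $m(w)=0$; the non-right-special case is symmetric, and the bispecial case --- including the empty word, which is bispecial --- is precisely the hypothesis~\eqref{eq:bilateral multiplicity}. Then I would run the same bookkeeping one step further: writing each length-$(n+2)$ word as $awb$ with $w\in\cL_n(X)$ and $(a,b)\in E(w)$ gives a bijection $\cL_{n+2}(X)\leftrightarrow\bigsqcup_{w\in\cL_n(X)}E(w)$, so $p_X(n+2)=\sum_{w\in\cL_n(X)}\Card(E(w))$. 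Combining this with the two expressions for $p_X(n+1)$ from the first part and with $p_X(n)=\sum_{w\in\cL_n(X)}1$, the second difference becomes $p_X(n+2)-2p_X(n+1)+p_X(n)=\sum_{w\in\cL_n(X)}m(w)=0$ for all $n\ge 0$.

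Hence $n\mapsto p_X(n+1)-p_X(n)$ is constant, equal to its value $p_X(1)-p_X(0)=p_X(1)-1$ at $n=0$, and telescoping gives $p_X(n)=(p_X(1)-1)n+1$. I do not expect a real obstacle here; the only points that need care are keeping the fiber/decomposition bijections honest and noticing that $m(w)$ is automatically $0$ away from the bispecial factors, which is exactly what lets a hypothesis stated only at bispecial words control the full sum $\sum_{w\in\cL_n(X)}m(w)$.
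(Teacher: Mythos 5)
Your proof is correct. Note that the paper does not prove this proposition at all: it is stated with a citation to Cassaigne and Nicolas and used as a known result, so there is nothing internal to compare against. Your argument is the standard one --- the prefix/suffix bijections for the first differences, the observation that the bilateral multiplicity $m(w)$ vanishes automatically at non-bispecial words so that the hypothesis at bispecial words controls the whole sum, and the identity $p_X(n+2)-2p_X(n+1)+p_X(n)=\sum_{w\in\cL_n(X)}m(w)$ followed by telescoping. The only imprecision is the parenthetical claim that the empty word is always bispecial; it need not be (e.g.\ for a single constant sequence), but this is harmless since in that case $\varepsilon$ falls under your non-left-special or non-right-special case and $m(\varepsilon)=0$ anyway.
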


A classical family of factors satisfying Equation~\eqref{eq:bilateral multiplicity} is made of the {\em ordinary} factors that are defined by the existence of $(a,b) \in E(w)$ such that $E(w) \subseteq (\{a\} \times \cA) \cup (\cA \times \{b\})$.
A larger family of factors also satisfying Equation~\eqref{eq:bilateral multiplicity} are the dendric factors defined below.

For a word $w \in F$, we consider the undirected bipartite graph $\cE_F(w)$ called its \emph{extension graph} with respect to $F$ and defined as follows:
its set of vertices is the disjoint union of $E_F^L(w)$ and $E_F^R(w)$ and its edges are the pairs $(a,b) \in E_F^L(w) \times E_F^R(w)$ such that $awb \in F$.
For an illustration, see Example~\ref{ex:fibo} below.
Note that a letter $a$ can label a vertex both on the left and on the right side. To distinguish the two, we will denote the left vertex $a^L$ and the right vertex $a^R$.
We say that $w$ is {\em dendric} if $\cE(w)$ is a tree. 
We then say that a shift space $X$ is a \emph{dendric shift} if all its factors are dendric in $\cL(X)$ and it is an \emph{eventually dendric shift (with threshold $N$)} if all factors of length at least $N$ are dendric (where $N$ is chosen minimal).

Note that every non-bispecial word and every ordinary bispecial word is trivially dendric.
In particular, the Arnoux-Rauzy shift spaces are dendric (recall that Arnoux-Rauzy shift spaces are the minimal shift spaces having exactly one left special factor $u_n$ and one right special factor $v_n$ of each length $n$ and such that $E^L(u_n) = \cA = E^R(v_n)$; all factors of an Arnoux-Rauzy shift are ordinary).
By Proposition~\ref{P:complexity}, we deduce that any dendric shift has factor complexity $p_X(n) = (p_X(1)-1)n +1$ for every $n$.

\begin{ex}\label{ex:fibo}
Take a shift $X$ such that $\cL_3(X) = \{001,010,100,101\}$ (for instance, the Fibonacci shift on $\{0,1\}$).
The extension graphs of the empty word and of the two letters $0$ and $1$ are represented in Figure~\ref{fig:fibo-ext}.

\begin{figure}[h]
 \tikzset{node/.style={circle,draw,minimum size=0.5cm,inner sep=0pt}}
 \tikzset{title/.style={minimum size=0.5cm,inner sep=0pt}}

 \begin{center}
  \begin{tikzpicture}
   \node[title](ee) {$\cE(\varepsilon)$};
   \node[node](eal) [below left= 0.5cm and 0.5cm of ee] {$0$};
   \node[node](ebl) [below= 0.7cm of eal] {$1$};
   \node[node](ear) [right= 1.5cm of eal] {$0$};
   \node[node](ebr) [below= 0.7cm of ear] {$1$};
   \path[draw,thick]
    (eal) edge node {} (ear)
    (eal) edge node {} (ebr)
    (ebl) edge node {} (ear);
   \node[title](ea) [right = 3cm of ee] {$\cE(0)$};
   \node[node](aal) [below left= 0.5cm and 0.5cm of ea] {$0$};
   \node[node](abl) [below= 0.7cm of aal] {$1$};
   \node[node](aar) [right= 1.5cm of aal] {$0$};
   \node[node](abr) [below= 0.7cm of aar] {$1$};
   \path[draw,thick]
    (aal) edge node {} (abr)
    (abl) edge node {} (aar)
    (abl) edge node {} (abr);
   \node[title](eb) [right = 3cm of ea] {$\cE(1)$};
   \node[node](bal) [below left= 0.5cm and 0.5cm of eb] {$0$};
   \node[node](bar) [right= 1.5cm of bal] {$0$};
   \path[draw,thick]
    (bal) edge node {} (bar);
  \end{tikzpicture}
 \end{center}

 \caption{The extension graphs of $\varepsilon$ (on the left), $0$ (in the center) and $1$ (on the right) are trees.}
 \label{fig:fibo-ext}
\end{figure}
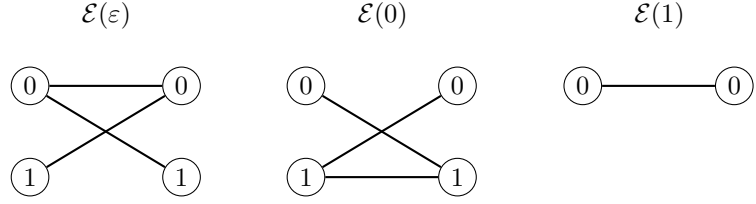
\end{ex}

\section{Extension graphs in morphic images}
\label{S:image under return morphism}

Let $\cA, \cB$ be finite alphabets with cardinality at least 2. 
By a {\em morphism} $\sigma:\cA^* \to \cB^*$, we mean a monoid homomorphism (also called a \emph{substitution} when $\cA = \cB$). We will always assume that $\cB$ is minimal, i.e. each letter appears in the image of some word.
A morphism is said to be {\em non-erasing} if the image of any letter is a non-empty word. 
We stress the fact that all morphisms are assumed to be non-erasing in the following.
Using concatenation, we extend $\sigma$ to~$\cA^\mathbb{Z}$. 
In particular, if $X$ is a shift space over $\cA$, the {\em image of $X$ under $\sigma$} is the shift space 
\[
\sigma \cdot X := \{S^k \sigma(x) \mid x \in X, 0\leq k < |\sigma(x_0)|\}.
\]

In general, it can be difficult to deduce extension graphs of words in $\cL(\sigma \cdot X)$ from those of $\cL(X)$. 
In this paper, we will restrict ourselves to particular morphisms which offer some nice recognizability properties in the sense that the pre-images of words are well understood.
These morphisms are defined using the notion of return words. Let $X$ be a shift space. If $w \in \cL(X)$ is non-empty, a \emph{return word} to $w$ in $X$ is a non-empty word $r$ such that $rw \in \cL(X)$ and $rw$ contains exactly two occurrences of $w$, one as a prefix and one as a suffix.

\begin{defi}
A \emph{return morphism for a word} $w \in \cB^+$ is an injective morphism $\sigma : \cA^* \to \cB^*$ such that, for all $a \in \cA$, $\sigma(a)w$ contains exactly two occurrences of $w$, one as a (proper) prefix and one as a (proper) suffix.
\end{defi}

In particular, the set $\sigma(\cA)$ is then a suffix code, i.e. $\sigma(a)$ is not a suffix of $\sigma(b)$ if $b \ne a$. Hence, injectivity on the letters is equivalent to injectivity on the words.

\begin{ex}
The morphism $\sigma : 0 \mapsto 01, 1 \mapsto 010, 2 \mapsto 0102$ is a return morphism for $01$ and it is also a return morphism for $010$.
\end{ex}

As we have seen in the previous example, the word $w$ for which $\sigma$ is a return morphism is not always unique. However, we have the following result.

\begin{prop}\label{P:return morphism for two words}
Let $\sigma$ be a return morphism for two distinct words $w$ and $w'$ and let $X$ be a shift space. If $|w| \leq |w'|$, then
\begin{enumerate}
\item
	$w$ is a proper prefix of $w'$;
\item
	$\sigma$ is a return morphism for all prefixes of $w'$ of length at least $|w|$;
\item
	$w$ is not right special in $\sigma \cdot X$;
\item
	if $w'$ is of maximal length, it is right special in $\sigma \cdot X$.
\end{enumerate}
\end{prop}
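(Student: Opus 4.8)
The plan is to exploit the definition of a return morphism very directly: saying that $\sigma$ is a return morphism for $w$ means that each $\sigma(a)$, read with $w$ appended, ``carries'' exactly one internal copy of $w$ at its start and one at its end. I would first record the basic structural fact that underlies everything: since $\sigma$ is a return morphism for $w$, the word $w$ is both a proper prefix and a proper suffix of each $\sigma(a)w$; comparing this for $w$ and $w'$ with $|w| \le |w'|$, both $w$ and $w'$ are proper prefixes of $\sigma(a)w$ and of $\sigma(a)w'$. Looking at $\sigma(a)w'$ (pick any $a$), $w$ is a prefix of it of length $\le |w'|$, and $w'$ is a prefix of it; hence $w$ is a prefix of $w'$, and it is proper since $w \ne w'$ and $|w|\le|w'|$. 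This gives (1). For (2): let $v$ be a prefix of $w'$ with $|w| \le |v| \le |w'|$. I need to show $\sigma(a)v$ has exactly two occurrences of $v$, one a proper prefix and one a proper suffix. The prefix occurrence is immediate. For the suffix occurrence, note $w'$ is a proper suffix of $\sigma(a)w'$, so $\sigma(a)w'$ factors as (something)$w'$ with the (something) nonempty; since $v$ is a prefix of $w'$, the suffix $v$ of $\sigma(a)v$ sits exactly where the matching prefix-$v$ of the suffix-$w'$ does. The delicate point is uniqueness: any occurrence of $v$ in $\sigma(a)v$ would, because $v$ contains $w$ as a prefix (wait — $v$ has length $\ge |w|$ and is a prefix of $w'$, so $v$ has $w$ as a prefix, hence every occurrence of $v$ begins with an occurrence of $w$), force a corresponding occurrence of $w$ in $\sigma(a)v$, and we control those via $\sigma$ being a return morphism for $w$. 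I would make this count precise: occurrences of $w$ in $\sigma(a)v$ are exactly those in $\sigma(a)w$ plus possibly occurrences straddling into the appended $v$, and since $v$ is a prefix of $w'$ and $\sigma$ is also a return morphism for $w'$, these are pinned down; conclude there are exactly two occurrences of $v$, at the claimed positions.

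For (3) and (4) I would pass to the shift $\sigma \cdot X$. Recall $\sigma \cdot X = \{S^k \sigma(x) : x \in X,\ 0 \le k < |\sigma(x_0)|\}$, so every point of $\sigma\cdot X$ is a shift of some $\sigma(x)$; in particular every occurrence of $w$ in $\cL(\sigma\cdot X)$ is contained in the $\sigma$-image of a point of $X$ and (using (2) applied to each $\sigma(a)$, together with how images concatenate) sits at a position of the form ``just before some $\sigma(a)$ factor'' — more precisely, the occurrences of $w$ in $\sigma(x)$ are synchronized with the boundaries between consecutive $\sigma(x_i)$. For (3): an occurrence of $w$ in a point of $\sigma\cdot X$ is followed by the tail of some $\sigma(a)$ and then $\sigma$ of the subsequent letters; but the defining property of a return morphism for $w$ says $\sigma(a)w$ contains $w$ only as prefix and suffix, which means the letter of $\sigma\cdot X$ immediately to the right of a $w$-occurrence is forced — it is determined by knowing we are at the start of a $\sigma(a)$ block and which $a$, but more to the point, the continuation up to the \emph{next} occurrence of $w$ is exactly one of the words $\sigma(a)$, and the single letter right after $w$ is the first letter after $w$ inside $w\sigma(a)$... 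Here I need to be a bit careful; the cleaner route is: $w$ being right special in $\sigma \cdot X$ would mean two distinct letters $c_1 \ne c_2$ with $wc_1, wc_2 \in \cL(\sigma\cdot X)$; trace each such extension back through $\sigma$ and the synchronization of $w$-occurrences with block boundaries to see that the letter following $w$ is determined by the (unique, by the return-morphism property for $w$) way $w$ sits as prefix of $\sigma(a)w$ — the continuation after the prefix-$w$ in $\sigma(a)w$ begins with a fixed letter depending only on... no: it depends on $a$. So right-specialness of $w$ in $\sigma \cdot X$ is possible a priori; the point of (3) is that it is \emph{not} possible precisely because $\sigma$ is \emph{also} a return morphism for the longer word $w'$, and $w$ being a proper prefix of $w'$ (by (1)) forces every occurrence of $w$ to actually extend to an occurrence of $w'$ — this last implication uses (2): $\sigma$ is a return morphism for every prefix of $w'$ of length $\ge |w|$, in particular for $w$ and for $w'$, and the ``next $w$-occurrence'' and ``next $w'$-occurrence'' coincide, so $w$ is always immediately followed (within $\cL(\sigma\cdot X)$) by the same word $w_{|w|+1}\cdots w_{|w'|}$, hence its right extension is unique. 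That proves (3).

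For (4): if $w'$ has maximal length among words for which $\sigma$ is a return morphism, I want $w'$ right special in $\sigma \cdot X$. Suppose not: then $w'$ has a unique right extension, say $w'c$, on all of $\cL(\sigma\cdot X)$. I claim then $\sigma$ would be a return morphism for $w'c$ as well, contradicting maximality. I would verify: $\sigma(a)w'c$ — since $\sigma(a)w'$ has exactly two occurrences of $w'$ (prefix and suffix), and the suffix occurrence of $w'$ in $\sigma(a)w'$ is followed by $c$ (because $\sigma(a)w' \in \cL(\sigma\cdot X)$ read appropriately... one needs $\sigma(a)$ to lie in $\cL(\sigma \cdot X)$ and to actually see the extension $c$, which holds since $w'c\in\cL(\sigma\cdot X)$ and $\sigma \cdot X$ is the image shift so every $\sigma(a)$ appears there), $\sigma(a)w'c$ has exactly two occurrences of $w'c$ — the prefix one (inside $\sigma(a)w'$, the prefix-$w'$ followed by... hmm, need the prefix-$w'$ to be followed by $c$ inside $\sigma(a)w'c$: the symbol after prefix-$w'$ in $\sigma(a)$ need not be $c$ unless $|\sigma(a)| \le |w'|$, but one can append enough: actually $\sigma(a)w'c$ contains $w'$ as a prefix and the next symbol is the $(|w'|+1)$-th symbol of $\sigma(a)w'c$; to make this a $w'c$ occurrence I should instead argue occurrences of $w'c$ correspond to occurrences of $w'$ that happen to be followed by $c$, and in $\sigma(a)w'c$ the prefix $w'$-occurrence is followed by $c$ iff $\sigma(a)w'c$ starts with $w'c$, which requires examining $\sigma(a)$). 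I will instead phrase (4) via return words in the image and the maximality, matching the treatment in the ternary predecessor paper: the word $w'$ is right special in $\sigma \cdot X$ iff it is not always extended to a longer synchronizing word, and maximality of $|w'|$ forces it to be right special. The main obstacle I anticipate is exactly this bookkeeping in (4): correctly relating ``$\sigma$ is a return morphism for $v$'' to ``$v$ is a synchronization point / $v$ has a unique right extension in $\sigma\cdot X$'', being careful about short images $\sigma(a)$ where $w'$ overflows a single block. Everything else — (1), (2), (3) — is a clean induction/uniqueness argument from the definition plus the already-available recognizability of $\sigma$ on the image shift.
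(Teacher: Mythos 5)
Your treatment of items (1)--(3) follows essentially the same route as the paper. For (1), the clean way to finish is the one you gesture at: $w$ and $w'$ are each prefixes of $\sigma(a)w$ and $\sigma(a)w'$ respectively, hence both are prefixes of the periodic word $\sigma(a)^\omega$, hence comparable; note that your intermediate claim that $w$ is a prefix of $\sigma(a)w'$ is not immediate as stated (when $|w|>|\sigma(a)|$ it already presupposes a comparison between $w$ and $w'$), but this is a one-line repair. For (2), the count is exactly the paper's: an occurrence of $v$ at position $i$ in $\sigma(a)v$ forces $i\leq|\sigma(a)|+1$, so the occurrence of $w$ it begins with lies entirely inside $\sigma(a)w$ and $i\in\{1,|\sigma(a)|+1\}$ --- your worry about occurrences of $w$ ``straddling into the appended $v$'' is moot for this reason. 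For (3), the argument you settle on (occurrences of $w$ in $\sigma\cdot X$ are synchronized with the starts of the blocks $\sigma(a)$, where the continuation begins with $w'$, so the unique right extension of $w$ is $w'_{|w|+1}$) is the paper's proof.

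The genuine gap is in item (4), and you flag it yourself without closing it. The paper's argument is precisely the contrapositive you begin with: if $w'$ has a unique right extension $b$ in $\sigma\cdot X$, then $w'b$ is a prefix and a suffix of every $\sigma(a)w'b$, has exactly two occurrences there (by the same counting as in (2), now applied to occurrences of $w'$), and so $\sigma$ is a return morphism for $w'b$, contradicting maximality. The step that derails you --- showing that the prefix occurrence of $w'$ in $\sigma(a)w'$ is followed by $b$ --- does not require ``examining $\sigma(a)$'' or splitting on whether $|\sigma(a)|\leq|w'|$. Since $w'$ is a prefix of $\sigma(c)w'$ for every letter $c$, it is a prefix of $\sigma(y)$ for every right-infinite word $y$ occurring in $X$; hence $\sigma(a)w'$ is a prefix of $\sigma(ay)$ and its prefix of length $|w'|+1$ is a factor of $\sigma\cdot X$ of the form $w'd$. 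By uniqueness of the right extension, $d=b$, regardless of whether the letter $d$ falls inside $\sigma(a)$ or inside the appended $w'$. Your fallback (``phrase (4) via return words in the image and the maximality'') is not an argument, so as written item (4) is unproved, even though the strategy you chose is the correct one.
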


\begin{proof}\hfill
\begin{enumerate}
\item
    Let $a \in \cA$. 
    If $|w|\leq |\sigma(a)|$, then $w$ is a prefix of $\sigma(a)$.
    As $w'$ is also prefix of $\sigma(a)w'$ and $|w|\leq|w'|$, $w$ is a proper prefix of $w'$.
    Otherwise, $|w'|\geq|w|>|\sigma(a)|$ and both $w$ and $w'$ are of the form $(\sigma(a))^np$, where $p$ is a prefix of $\sigma(a)$. 
    Thus $w$ is a proper prefix of $w'$.
\item
	Let $wu$ be a prefix of $w'$. By definition, it is a prefix and a suffix of $\sigma(a)wu$ for all $a \in \cA$.
	Moreover, since $w$ has only two occurrences in $\sigma(a)w$, $wu$ has only two occurrences in $\sigma(a)wu$ thus $\sigma$ is a return morphism for $wu$.
\item
	Since $\sigma$ is a return morphism for $w$, $w$ only occurs in $\sigma \cdot X$ as a proper prefix of the words $\sigma(a)w$, $a \in \cA$.
	However, for all $a \in \cA$, $w'$ is a prefix of $\sigma(a)w'$ thus $ww'_{|w| + 1}$ is a prefix of $\sigma(a)w$. The only right extension of $w$ is then $w'_{|w| + 1}$.
\item
	Assume that $w'$ has only one right extension $b$. Thus $w'b$ is a prefix (and a suffix) of each $\sigma(a)w'b$, $a \in \cA$. As $w'$ has only two occurrences in $\sigma(a)w'$, the word $w'b$ has only two occurrences in $\sigma(a)w'b$, which proves that $\sigma$ is a return morphism for $w'b$, a contradiction.
\end{enumerate}
\end{proof}

We will sometimes talk about a \emph{return morphism} without specifying the word $w$ for which it is a return morphism. 
By default, this $w$ will be chosen of maximal length which is possible by the previous proposition. Because of the previous proposition, most results are independent of this choice (or only depend in a minor way).

We now give a description of the extension graphs in $\sigma \cdot X$ when $\sigma$ is a return morphism. We first introduce some notations.

\begin{defi}
Let $\sigma : \cA^* \to \cB^*$ be a return morphism for the word $w \in \cB^+$. For all $u \in \cB^*$, we define the sets
\[
	\cA^L_{\sigma, u} = \{a \in \cA \mid \sigma(a) \in \cB^+u\}
\]
and
\[
	\cA^R_{\sigma, u} = \{a \in \cA \mid \sigma(a)w \in u\cB^+\}.
\]

For a shift $X$ over $\cA$ and a word $v \in \cL(X)$, we then note $E_{X, s, p}(v) = E_X(v) \cap (\cA^L_{\sigma, s} \times \cA^R_{\sigma, p})$.
\end{defi}

\begin{prop}\label{P:definition of antecedent}
Let $X$ be a shift space over $\cA$, $\sigma : \cA^* \to \cB^*$ a return morphism for the word $w$ and $Y = \sigma \cdot X$.
\begin{itemize}
\item
	If $u \in \cL(Y)$ does not contain any occurrence of $w$, then
	\[
		\cE_Y(u) = \cE_{\cL(\sigma)}(u),
	\]
	where we define
	\[
		\cL(\sigma) = \bigcup_{a \in \cA} \cL(\sigma(a)w).
	\]
\item
	If $u \in \cL(Y)$ contains an occurrence of $w$, then there exists a unique triplet $(s, v, p) \in \cB^* \times \cL(X) \times w\cB^*$ such that $u = s\sigma(v)p$ and for which the set $E_{X, s, p}(v)$ is not empty.
The bi-extensions of $u$ are then governed by those of $v$ through the equation
\begin{equation}\label{Eq: link between extensions}
	E_Y(u) =
	\{(a', b') \in \cB \times \cB \mid \exists (a,b) \in E_{X,s,p}(v) : \sigma(a) \in \cB^*a's \wedge \sigma(b)w \in pb'\cB^*\}.
\end{equation}
\end{itemize}
\end{prop}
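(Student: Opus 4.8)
The whole proof rests on the recognizability of $\sigma$ on $Y=\sigma\cdot X$, which I would establish first. Every $y\in Y$ is a concatenation of blocks $\sigma(x_n)$ coming from a point $x\in X$, which determines a bi-infinite set of cut points, and the central fact is that $w$ occurs in $y$ exactly at those cut points. That $w$ occurs at every cut point is proved by peeling off one block at a time: $w$ is a proper prefix of $\sigma(b)w$ for every $b$, so either $w$ is already a prefix of $\sigma(b_1)$ or $w=\sigma(b_1)w''$ with $w''$ a shorter prefix of $w$, and one iterates; thus $w$ is a prefix of $\sigma(b_1)\sigma(b_2)\cdots$ for any legal continuation. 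That $w$ occurs nowhere else follows because, read from a cut point $c_i$, the word $y$ starts with $\sigma(x_i)w$, in which, by definition of a return morphism, $w$ occurs only at its two ends, and both ends are again cut points. In particular the block decomposition of $y$ is unique.

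For the first item, suppose $|u|_w=0$. Any occurrence of $u$ (or of $au$, $ub$, $aub$) starting inside a block $\sigma(x_i)$ cannot reach the copy of $w$ at the next cut point, hence lies inside $y_{[c_i,c_{i+1}+|w|)}=\sigma(x_i)w\in\cL(\sigma)$; the only subtlety is when the word is flush against a cut point, where being $w$-free forces $u$ to be a short prefix or suffix of $w$ and the relevant window is a single extended block $\sigma(x_{i-1})w$ or $\sigma(x_i)w$. Together with the immediate inclusion $\cL(\sigma)\subseteq\cL(Y)$ (each $\sigma(a)w$ is a prefix of the $\sigma$-image of a legal continuation of $a$, using that $X$ is over $\cA$ and the first paragraph), this yields $\cE_Y(u)=\cE_{\cL(\sigma)}(u)$, vertices and edges.

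For the second item, suppose $|u|_w>0$ and fix an occurrence $u=y_{[n,n+|u|)}$. Take $c_i$ the first cut point $\ge n$ and $c_{j+1}$ the last cut point with $c_{j+1}+|w|\le n+|u|$ (both exist since $u$ contains a $w$), and set $s=y_{[n,c_i)}$, $v=x_i\cdots x_j$, $p=y_{[c_{j+1},n+|u|)}$. One checks that $s$ is a (possibly empty) proper suffix of $\sigma(x_{i-1})$, that $p$ starts with $w$ and is a proper prefix of $\sigma(x_{j+1})w$ (here maximality of $c_{j+1}$ is used), and that $(x_{i-1},x_{j+1})\in E_{X,s,p}(v)$, so $(s,v,p)$ is admissible. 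For uniqueness: given any admissible $u=s'\sigma(v')p'$ with witness $(a,b)\in E_{X,s',p'}(v')$, the word $\sigma(av'b)w=\sigma(a)\sigma(v')\sigma(b)w$ equals $z_1\,u\,z_2$ for suitable $z_1,z_2$ and lies in $\cL(Y)$; recognizability inside this factor shows the occurrences of $w$ in this copy of $u$ sit exactly at $|s'|,|s'|+|\sigma(v'_1)|,\dots,|u|-|p'|$. Since the set of occurrences of $w$ in $u$ is intrinsic, its extreme values pin down $|s'|$ and $|p'|$, hence $s'$ and $p'$, hence $\sigma(v')$, hence $v'$ by injectivity of $\sigma$. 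Finally, for \eqref{Eq: link between extensions}: if $(a',b')\in E_Y(u)$, locate an occurrence of $a'ub'$ in some $y\in Y$; by uniqueness, $a'$ is the letter of $\sigma(x_{i-1})$ preceding $s$, so $\sigma(x_{i-1})\in\cB^*a's$, and $b'$ the letter of $\sigma(x_{j+1})w$ following $p$, so $\sigma(x_{j+1})w\in pb'\cB^*$, with $(x_{i-1},x_{j+1})\in E_{X,s,p}(v)$; conversely any $(a,b)\in E_{X,s,p}(v)$ with $\sigma(a)\in\cB^*a's$ and $\sigma(b)w\in pb'\cB^*$ makes $a'ub'$ a factor of $\sigma(avb)w\in\cL(Y)$, whence $(a',b')\in E_Y(u)$.

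I expect the main difficulty to be purely organizational: handling all partial-block and boundary cases uniformly (a factor flush against a cut point; a short $w$ straddling several $\sigma$-blocks when $|w|>\min_a|\sigma(a)|$; the meaning of ``proper'' prefix or suffix when $s$ or $p$ is empty), and making the uniqueness argument precise, which needs both recognizability and the suffix-code property of $\sigma(\cA)$, not either one alone.
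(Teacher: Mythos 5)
Your proof is correct and takes essentially the same route as the paper: the key fact that occurrences of $w$ in points of $Y$ sit exactly at the block boundaries (equivalently, that $\sigma(v)$ is prefix-comparable with $w$) is precisely what the paper's argument for the first item rests on, and your treatment of the second item is the standard recognizability argument that the paper delegates to \cite[Proposition 4.1]{Gheeraert_Lejeune_Leroy:2021}. The only difference is that your write-up is self-contained where the paper cites.
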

\begin{proof}
Let us prove the first case.
For any $(b,c) \in E_Y(u)$, there exist a letter $a$ and a word $v \in \cL(X)$ such that $buc$ is a factor of $\sigma(a)\sigma(v)$. We can moreover assume that the first occurrence of $buc$ begins in $\sigma(a)$. Since $w$ is not a factor of $u$ and $\sigma(v)$ is prefix comparable with $w$, i.e. $\sigma(v)$ is either a prefix of $w$ or has $w$ as a prefix, we deduce that $buc$ is a factor of $\sigma(a)w$. This proves that $(b,c) \in E_{\cL(\sigma)}(u)$.
As every letter of $\cA$ appears in $X$, $\cL(\sigma)$ is included in $\cL(Y)$ and we conclude that $\cE_Y(u) = \cE_{\cL(\sigma)}(u)$.

For the second case, the proof is similar to the proof of~\citep[Proposition 4.1]{Gheeraert_Lejeune_Leroy:2021}.
\end{proof}

Under the assumptions of the previous result, the words $u$ that do not contain any occurrence of $w$ are the \emph{$\sigma$-initial factors} of $Y$. 
As the set of these factors and their extension graphs only depend on $\sigma$ and not on $X$ and $Y$, we will sometimes talk about \emph{$\sigma$-initial factors} without specifying $Y$. 
Note that a different convention for the choice of $w$ leads to a smaller set of $\sigma$-initial factors. However, by Proposition~\ref{P:return morphism for two words}, the set of right special $\sigma$-initial factors is the same.

Whenever $u$ and $v$ are as in the second case of the previous proposition, $v$ is called the \emph{antecedent} of $u$ under $\sigma$ and $u$ is said to be an \emph{extended image} of $v$.

Thus, any factor of $Y$ is either a $\sigma$-initial factor or an extended image of some factor in $X$. Using Proposition~\ref{P:definition of antecedent}, we directly know when the $\sigma$-initial factors of $Y$ are dendric. It only depends on $\sigma$ and not on $X$. We introduce the following definition.

\begin{defi}
A return morphism $\sigma$ is \emph{dendric} if every $\sigma$-initial factor is dendric in $\cL(\sigma)$.
\end{defi}

Note that, by Proposition~\ref{P:return morphism for two words} and because the proof of Proposition~\ref{P:definition of antecedent} works for any choice of $w$, the fact that a return morphism for $w$ is dendric is independent of the choice of $w$.

\begin{ex}\label{Ex:dendric return morphism}
The morphism $\beta : 0 \mapsto 0, 1 \mapsto 01, 2 \mapsto 02, 3 \mapsto 032$ is a return morphism for $0$. Its $\beta$-initial factors are $\eps$, $1$, $2$, $3$ and $32$. Note that the only one which is right special is $\eps$ and we can easily check that it is dendric. Thus, $\beta$ is a dendric return morphism.
\end{ex}

We will now use the second item of Proposition~\ref{P:definition of antecedent} to characterize the fact that all the extended images are dendric in $Y$. Equation~\ref{Eq: link between extensions} can also be seen in terms of extension graphs. To do that, we introduce two types of (partial) maps.

\begin{defi}
Let $\sigma : \cA^* \to \cB^*$ be a return morphism for the word $w$. For $s, p \in \cB^*$, we define the partial maps $\varphi^L_{\sigma, s}, \varphi^R_{\sigma, p} : \cA \to \cB$ as follows:
\begin{enumerate}
\item
	for $a \in \cA^L_{\sigma, s}$, $\varphi^L_{\sigma, s}(a)$ is the letter $a'$ such that $\sigma(a) \in \cB^*a's$;
\item
	for $b \in \cA^R_{\sigma, p}$, $\varphi^R_{\sigma, p}(b)$ is the letter $b'$ such that $\sigma(b)w \in pb'\cB^*$.
\end{enumerate}
\end{defi}

We denote by $\cE_{X, s, p}(v)$ the subgraph of $\cE_X(v)$ generated by the edges in $E_{X, s, p}(v)$. Remark that, with this definition, no vertex is isolated. In particular, the set of vertices might be strictly included in the disjoint union of $E_X^L(v) \cap \cA_{\sigma, s}^L$ and $E_X^R(v) \cap \cA_{\sigma, p}^R$.

The following proposition then directly follows from Equation~\ref{Eq: link between extensions}.

\begin{prop}\label{P:image of graphs by phi}
Let $X$ be a shift space over $\cA$, $\sigma : \cA^* \to \cB^*$ a return morphism for the word $w$, $Y = \sigma \cdot X$, and $u \in \cL(Y)$ containing an occurrence of $w$. If $(s, v, p)$ is the triplet given by Proposition~\ref{P:definition of antecedent}, then the extension graph of $u$ in $Y$ is the image of the graph $\cE_{X, s, p}(v)$ by the morphisms $\varphi^L_{\sigma, s}$ acting on the left vertices and $\varphi^R_{\sigma, p}$ acting on the right vertices.
\end{prop}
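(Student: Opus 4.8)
The plan is to unwind the definitions so that Equation~\eqref{Eq: link between extensions} becomes, essentially verbatim, the assertion that $\cE_Y(u)$ is the image of $\cE_{X,s,p}(v)$ under the pair $(\varphi^L_{\sigma,s},\varphi^R_{\sigma,p})$. First I would record that these partial maps are well defined on their respective domains: if $a\in\cA^L_{\sigma,s}$ then $\sigma(a)\in\cB^+s$, so there is a (unique) letter immediately to the left of the suffix $s$ in $\sigma(a)$, namely $\varphi^L_{\sigma,s}(a)$; dually, if $b\in\cA^R_{\sigma,p}$ then $\sigma(b)w\in p\cB^+$ determines a unique letter $\varphi^R_{\sigma,p}(b)$ just after the prefix $p$. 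By this uniqueness, for a letter $a$ the condition ``$\sigma(a)\in\cB^*a's$'' is equivalent to ``$a\in\cA^L_{\sigma,s}$ and $a'=\varphi^L_{\sigma,s}(a)$'', and ``$\sigma(b)w\in pb'\cB^*$'' is equivalent to ``$b\in\cA^R_{\sigma,p}$ and $b'=\varphi^R_{\sigma,p}(b)$''.

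Next I would apply Proposition~\ref{P:definition of antecedent} to $u$ (legitimate since $|u|_w>0$) to obtain the unique triplet $(s,v,p)$, the factorization $u=s\sigma(v)p$, and Equation~\eqref{Eq: link between extensions} for $E_Y(u)$. Since every $(a,b)\in E_{X,s,p}(v)=E_X(v)\cap(\cA^L_{\sigma,s}\times\cA^R_{\sigma,p})$ already lies in $\cA^L_{\sigma,s}\times\cA^R_{\sigma,p}$, the reformulation above turns Equation~\eqref{Eq: link between extensions} into
\[
	E_Y(u)=\{(\varphi^L_{\sigma,s}(a),\varphi^R_{\sigma,p}(b))\mid (a,b)\in E_{X,s,p}(v)\}.
\]
By definition the edges of $\cE_Y(u)$ are the pairs in $E_Y(u)$ and the edges of $\cE_{X,s,p}(v)$ are the pairs in $E_{X,s,p}(v)$, so this equality says exactly that the edge set of $\cE_Y(u)$ is the image of the edge set of $\cE_{X,s,p}(v)$ under $(\varphi^L_{\sigma,s},\varphi^R_{\sigma,p})$.

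Finally I would check that the vertex sets match. The graph $\cE_{X,s,p}(v)$ has no isolated vertex by construction (only endpoints of edges in $E_{X,s,p}(v)$ are kept), and $\cE_Y(u)$, being the extension graph of a word in the language of a shift space, has none either: any $a\in E^L_Y(u)$ gives $au\in\cL(Y)$, hence $aub\in\cL(Y)$ for some $b$ and thus an edge at $a^L$, and dually on the right. Hence in both graphs the vertex set is precisely the set of endpoints of edges, so applying $(\varphi^L_{\sigma,s},\varphi^R_{\sigma,p})$ to the edges carries the vertex set of $\cE_{X,s,p}(v)$ onto that of $\cE_Y(u)$ as well, which completes the identification. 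The only point requiring any care — and it is minor — is this vertex bookkeeping: one must not forget that $\cE_{X,s,p}(v)$ can have strictly fewer vertices than $E^L_X(v)\cap\cA^L_{\sigma,s}$ and $E^R_X(v)\cap\cA^R_{\sigma,p}$, so that the correspondence of vertex sets genuinely relies on the absence of isolated vertices in extension graphs of shift languages; everything else is a direct transcription of Equation~\eqref{Eq: link between extensions}.
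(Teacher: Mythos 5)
Your proposal is correct and follows the same route as the paper, which simply observes that the proposition ``directly follows'' from Equation~\eqref{Eq: link between extensions}; you have merely spelled out the two routine verifications implicit in that remark (well-definedness of $\varphi^L_{\sigma,s}$, $\varphi^R_{\sigma,p}$ on $\cA^L_{\sigma,s}$, $\cA^R_{\sigma,p}$, and the matching of vertex sets via the absence of isolated vertices on both sides). Nothing is missing.
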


Remark that, as a consequence, $E^L_Y(u) \subseteq \varphi^L_{\sigma, s}(E^L_X(v))$ and $E^R_Y(u) \subseteq \varphi^R_{\sigma, p}(E^R_X(v))$. In particular, if $u$ is left (resp., right) special, $v$ is also left (resp., right) special and there exist two distinct letters $a, b \in \cA$ and two distinct letters $a', b' \in \cB$ such that
\[
	\sigma(a) \in \cB^*a's, \quad \sigma(b) \in \cB^*b's
\]
\[
	\text{(resp., } \sigma(a)w \in pa'\cB^*, \quad \sigma(b)w \in pb'\cB^* \text{)}.
\]

This observation motivates the following definition.

\begin{defi}
Let $\sigma : \cA^* \to \cB^*$ be a return morphism for the word $w$. For two distinct letters $a, b \in \cA$, we denote by $s_\sigma(a, b)$ the longest common suffix of $\sigma(a)$ and $\sigma(b)$ and by $p_\sigma(a, b)$ the longest common prefix of $\sigma(a)w$ and $\sigma(b)w$.
Finally, we define the two sets
\[
	\cT^L(\sigma) = \{s_\sigma(a,b) \mid a, b \in \cA, a \ne b\}
\]
and
\[
	\cT^R(\sigma) = \{p_\sigma(a,b) \mid a, b \in \cA, a \ne b\}.
\]
\end{defi}

The following result is~\citep[Proposition 4.8]{Gheeraert_Lejeune_Leroy:2021}.

\begin{prop}\label{P:extended images are dendric}
Let $X$ be a shift space over $\cA$, $\sigma : \cA^* \to \cB^*$ a return morphism, $Y = \sigma \cdot X$ and $v \in \cL(X)$ a dendric factor. All the extended images of $v$ under $\sigma$ are dendric in $Y$ if and only if the following conditions are satisfied
\begin{enumerate}
\item
	for every $s \in \cT^L(\sigma)$, $\cE_{X, s, \eps}(v)$ is connected;
\item
	for every $p \in \cT^R(\sigma)$, $\cE_{X, \eps, p}(v)$ is connected.
\end{enumerate}
\end{prop}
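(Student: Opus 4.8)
The plan is to push the question through Propositions~\ref{P:definition of antecedent} and~\ref{P:image of graphs by phi}, which already reduce it to graph theory: for the triplet $(s,v,p)$ attached to an extended image $u=s\sigma(v)p$, the graph $\cE_Y(u)$ is the image of the subgraph $\cE_{X,s,p}(v)$ of the tree $\cE_X(v)$ under the pair $(\varphi^L_{\sigma,s},\varphi^R_{\sigma,p})$, so one must decide when all such images are trees. First I would record the elementary behaviour of the folding maps: for distinct $a,b\in\cA^L_{\sigma,s}$ the word $s$ is a suffix of $s_\sigma(a,b)$, and $\varphi^L_{\sigma,s}(a)=\varphi^L_{\sigma,s}(b)$ holds precisely when this is a \emph{proper} suffix; equivalently, the fibres of $\varphi^L_{\sigma,s}$ on $\cA^L_{\sigma,s}$ are the classes ``$\sigma(\cdot)$ has the same suffix of length $|s|+1$'', and dually for $\varphi^R_{\sigma,p}$, $p_\sigma$ and prefixes. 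Two reductions then follow at once: a non-bispecial extended image is automatically dendric, and, by the observation following Proposition~\ref{P:image of graphs by phi}, if $u=s\sigma(v)p$ is bispecial then $s\in\cT^L(\sigma)$ and $p\in\cT^R(\sigma)$. So only bispecial extended images need attention.

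For sufficiency, assume (1)--(2) and let $u=s\sigma(v)p$ be a bispecial extended image. Intersecting edge sets, $\cE_{X,s,p}(v)$ is exactly the intersection of $\cE_{X,s,\eps}(v)$ and $\cE_{X,\eps,p}(v)$; by (1)--(2) these are connected subgraphs, hence subtrees, of the tree $\cE_X(v)$, and the intersection of two subtrees of a tree is again a subtree (nonempty, since $u$ exists), so $\cE_{X,s,p}(v)$ is a tree and its image $\cE_Y(u)$ is connected. To see $\cE_Y(u)$ is acyclic, the key observation is this: if $\varphi^L_{\sigma,s}$ identifies two left vertices $x,y$ of the subtree $\cE_{X,s,p}(v)$, then $s^*:=s_\sigma(x,y)$ has $s$ as a proper suffix and lies in $\cT^L(\sigma)$; by (1), $\cE_{X,s^*,\eps}(v)$ is connected, hence contains the (unique) $\cE_X(v)$-path from $x$ to $y$, so every intermediate left vertex of that path lies in $\cA^L_{\sigma,s^*}$ and has the same letter of $\sigma(\cdot)$ immediately preceding $s$ as $x$ and $y$ do, and is therefore identified with $x$ and $y$ by $\varphi^L_{\sigma,s}$ --- so the path folds onto a star. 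A (somewhat technical) analysis of how a cycle could arise from the simultaneous folding of left and right vertices, using this observation on the left via (1) and its mirror on the right via (2), then shows that no cycle is created. Hence $\cE_Y(u)$ is a tree and $u$ is dendric.

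For necessity, suppose some $\cE_{X,s,\eps}(v)$ with $s\in\cT^L(\sigma)$ is nonempty and disconnected (it is vacuously connected when empty). I would pick two left vertices $a_1,a_2$ of $\cE_X(v)$ in $\cA^L_{\sigma,s}$ lying in distinct components of $\cE_{X,s,\eps}(v)$ at minimal $\cE_X(v)$-distance --- so the path joining them has all its intermediate left vertices outside $\cA^L_{\sigma,s}$ --- and set $s':=s_\sigma(a_1,a_2)\in\cT^L(\sigma)$. Since $\cA^L_{\sigma,s'}\subseteq\cA^L_{\sigma,s}$, the graph $\cE_{X,s',\eps}(v)$ is still disconnected with $a_1,a_2$ in distinct components, while now $\varphi^L_{\sigma,s'}$ separates $a_1$ and $a_2$; tracking the images of these two components under the folding maps of the extended image $s'\sigma(v)w$ (legitimate because $\cA^R_{\sigma,w}=\cA$ and $a_1\in\cA^L_{\sigma,s'}$ is a left extension of $v$), one shows that its extension graph cannot be a tree: either it stays disconnected, or the right-hand folding reconnects the two components along more than one vertex, producing a cycle. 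Thus (1) holds, and (2) follows symmetrically by taking $s=\eps$.

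The step I expect to be the main obstacle is exactly this folding analysis, appearing both in the acyclicity half of the sufficiency proof and in the cycle/disconnection half of necessity: one must establish that connectivity of the restricted graphs $\cE_{X,s,\eps}(v)$, $s\in\cT^L(\sigma)$, and $\cE_{X,\eps,p}(v)$, $p\in\cT^R(\sigma)$, is precisely what prevents a folded path in $\cE_X(v)$ from closing into a cycle, and conversely that any failure of connectivity is witnessed by an extended image with a disconnected or cyclic extension graph. This rests on a careful bookkeeping that links the fibres of $\varphi^L_{\sigma,s}$ and $\varphi^R_{\sigma,p}$ --- governed by common suffixes and prefixes and thus indexed by the longer elements of $\cT^L(\sigma)$ and $\cT^R(\sigma)$ --- with the subtree structure of $\cE_X(v)$; once it is in place, both implications come down to elementary arguments on paths in trees and on comparisons of vertex and edge counts.
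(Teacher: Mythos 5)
First, note that the paper does not actually prove this statement: it is quoted as \cite[Proposition 4.8]{Gheeraert_Lejeune_Leroy:2021}, so there is no in-paper argument to compare against and your attempt must stand on its own. Your sufficiency half takes a sound route: the reduction to bispecial extended images with $s\in\cT^L(\sigma)$ and $p\in\cT^R(\sigma)$, the identification of $\cE_{X,s,p}(v)$ as the intersection of two subtrees of the tree $\cE_X(v)$ (hence a subtree), and the ``key observation'' that a $\varphi^L_{\sigma,s}$-fibre of left vertices spans, via connectivity of $\cE_{X,s^*,\eps}(v)$ for the longer suffix $s^*=s_\sigma(x,y)$, a subtree all of whose left vertices lie in that fibre, are all correct. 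But the acyclicity of the folded graph --- which is the entire content of the proposition --- is then only asserted (``a somewhat technical analysis\dots shows that no cycle is created''). That step can be completed (contract the left fibres one at a time, check by an edge/vertex count that each contraction removes exactly as many edges as vertices so the quotient of the tree stays a tree, then repeat on the right, verifying the fibre-path property persists), but as written the hardest part of the proof is missing.

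The necessity half contains a genuine error. You claim that if $\cE_{X,s,\eps}(v)$ is disconnected then the extension graph of the single extended image $s'\sigma(v)w$, with $s'=s_\sigma(a_1,a_2)$, ``either stays disconnected, or the right-hand folding reconnects the two components along more than one vertex, producing a cycle.'' There is no reason for the dichotomy: $\varphi^R_{\sigma,w}$ may identify exactly one right vertex of one component with exactly one right vertex of the other, and then the image is a tree. Concretely, let $\cE_X(v)$ be the path $1^L-1^R-2^L-2^R-3^L-3^R-4^L$ and let $\sigma$ be the return morphism for $0$ with $\sigma(1)=0211$, $\sigma(2)=021$, $\sigma(3)=0311$, $\sigma(4)=01$. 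Then $s=11\in\cT^L(\sigma)$, $\cA^L_{\sigma,11}=\{1,3\}$, and $\cE_{X,11,\eps}(v)$ has the two components $\{1^L,1^R\}$ and $\{3^L,2^R,3^R\}$, so condition (1) fails and here $s'=s_\sigma(1,3)=11$; yet $\varphi^R_{\sigma,0}$ sends both $1$ and $2$ to $2$, so the extension graph of $11\,\sigma(v)\,0$ is $2^L-2^R$, $3^L-2^R$, $3^L-3^R$: a tree. Your proposed witness is dendric. (The non-dendric extended image in this configuration is $1\,\sigma(v)\,021$, whose extension graph is the $4$-cycle $1^L-1^R-2^L-0^R-1^L$; it combines the \emph{shorter} suffix $s=1$, which merges the left vertices $1$ and $3$, with the \emph{longer} prefix $p=021$, which keeps the right vertices $1$ and $2$ apart.) So necessity requires a genuinely different argument: one cannot fix $p=w$ and detect a left disconnection in that single extended image; the witness must pair a suffix that folds the two components together on the left with a prefix that separates them on the right, and exhibiting such a pair is the missing idea.
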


\begin{rem}\label{R:extended images when ordinary graph}
Recall that a word $v \in \cL(X)$ is ordinary if there exist $a, b \in \cA$ such that $E_X(v) = (E^L_X(v) \times \{b\}) \cup (\{a\} \times E^R_X(v))$. In this case, for all $s \in \cT^L(\sigma)$ (resp., $p \in \cT^R(\sigma)$), the graph $\cE_{X, s, \eps}(v)$ (resp., $\cE_{X, \eps, p}(v)$) is connected (or empty). Thus $v$ only has dendric extended images. In fact, one can see using Proposition~\ref{P:image of graphs by phi} that $v$ only has ordinary extended images.
\end{rem}

As a direct consequence of Propositions~\ref{P:definition of antecedent} and~\ref{P:extended images are dendric}, we can determine when the image of a dendric shift under a return morphism is dendric.

\begin{prop}\label{P:equiv dendric image of dendric shift}
Let $X$ be a dendric shift over $\cA$ and $\sigma : \cA^* \to \cB^*$ be a return morphism. The shift space $Y = \sigma \cdot X$ is dendric if and only if $\sigma$ is dendric and, for all $v \in \cL(X)$, for all $s \in \cT^L(\sigma)$ and for all $p \in \cT^R(\sigma)$, the graphs $\cE_{X, s, \eps}(v)$ and $\cE_{X, \eps, p}(v)$ are connected.
\end{prop}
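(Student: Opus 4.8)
The plan is to combine Propositions~\ref{P:definition of antecedent} and~\ref{P:extended images are dendric} with the classification of factors of $Y = \sigma \cdot X$ into $\sigma$-initial factors and extended images. Recall from the discussion after Proposition~\ref{P:definition of antecedent} that every $u \in \cL(Y)$ is either a $\sigma$-initial factor (one with $|u|_w = 0$) or an extended image of a unique antecedent $v \in \cL(X)$. Hence $Y$ is dendric if and only if (i) every $\sigma$-initial factor is dendric in $\cL(Y)$ and (ii) every extended image of every factor of $X$ is dendric in $Y$. The first step is to observe that by the first item of Proposition~\ref{P:definition of antecedent}, the extension graph in $Y$ of a $\sigma$-initial factor $u$ coincides with $\cE_{\cL(\sigma)}(u)$; thus condition (i) holds if and only if every $\sigma$-initial factor is dendric in $\cL(\sigma)$, which is exactly the definition of $\sigma$ being a dendric return morphism.

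Next I would handle condition (ii). First note that if $X$ is dendric, then every $v \in \cL(X)$ is dendric, so Proposition~\ref{P:extended images are dendric} applies to each $v$: all extended images of $v$ are dendric in $Y$ if and only if $\cE_{X, s, \eps}(v)$ is connected for every $s \in \cT^L(\sigma)$ and $\cE_{X, \eps, p}(v)$ is connected for every $p \in \cT^R(\sigma)$. Taking the conjunction over all $v \in \cL(X)$, condition (ii) is equivalent to requiring these connectivity conditions for all $v \in \cL(X)$, all $s \in \cT^L(\sigma)$ and all $p \in \cT^R(\sigma)$. Combining the equivalences for (i) and (ii) yields the statement.

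The one point that needs a little care — and which I expect to be the only genuine (if minor) obstacle — is the logical decomposition itself: making sure that $\cL(Y)$ is partitioned exactly into $\sigma$-initial factors and extended images, with no word falling into both classes and none left out. This is guaranteed by the second item of Proposition~\ref{P:definition of antecedent} (uniqueness of the triplet $(s,v,p)$ when $|u|_w > 0$) together with the definition of $\sigma$-initial factors as those with $|u|_w = 0$; so the two classes are complementary and the case analysis is exhaustive. One should also remark that Proposition~\ref{P:extended images are dendric} requires $v$ to be a dendric factor, which is automatic here since $X$ is assumed dendric, so no additional hypothesis is hidden. With these observations in place the proof is essentially a two-line assembly of the quoted propositions.

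\begin{proof}
By the remarks following Proposition~\ref{P:definition of antecedent}, every word $u \in \cL(Y)$ is either a $\sigma$-initial factor (i.e.\ $|u|_w = 0$) or an extended image of a unique factor $v \in \cL(X)$ (when $|u|_w > 0$), and these two cases are mutually exclusive. Hence $Y$ is dendric if and only if every $\sigma$-initial factor is dendric in $\cL(Y)$ and every extended image of every $v \in \cL(X)$ is dendric in $Y$.

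For the first condition, the first item of Proposition~\ref{P:definition of antecedent} gives, for a $\sigma$-initial factor $u$, that $\cE_Y(u) = \cE_{\cL(\sigma)}(u)$. Thus all $\sigma$-initial factors are dendric in $\cL(Y)$ if and only if they are all dendric in $\cL(\sigma)$, that is, if and only if $\sigma$ is a dendric return morphism.

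For the second condition, since $X$ is dendric, each $v \in \cL(X)$ is a dendric factor, so Proposition~\ref{P:extended images are dendric} applies and shows that all extended images of $v$ are dendric in $Y$ if and only if $\cE_{X, s, \eps}(v)$ is connected for every $s \in \cT^L(\sigma)$ and $\cE_{X, \eps, p}(v)$ is connected for every $p \in \cT^R(\sigma)$. Taking the conjunction over all $v \in \cL(X)$, the second condition holds if and only if, for all $v \in \cL(X)$, all $s \in \cT^L(\sigma)$ and all $p \in \cT^R(\sigma)$, the graphs $\cE_{X, s, \eps}(v)$ and $\cE_{X, \eps, p}(v)$ are connected.

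Combining the two equivalences yields the claim.
\end{proof}
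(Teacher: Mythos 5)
Your proof is correct and takes exactly the route the paper intends: the paper presents this proposition as a direct consequence of Propositions~\ref{P:definition of antecedent} and~\ref{P:extended images are dendric}, via the same partition of $\cL(Y)$ into $\sigma$-initial factors and extended images. Your explicit verification that the two classes are complementary and that the dendricity hypothesis on $X$ supplies the hypothesis of Proposition~\ref{P:extended images are dendric} is precisely the (unwritten) content of the paper's ``direct consequence''.
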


In particular, by Remark~\ref{R:extended images when ordinary graph}, the image of an Arnoux-Rauzy shift under a return morphism is dendric if and only if the morphism is dendric.

In the case of an eventually dendric shift, we recall the following characterization.

\begin{prop}[Dolce and Perrin~\citep{Dolce_Perrin:2021}]\label{P:equiv eventually dendric DP}
Let $X$ be a shift space over $\cA$. The following properties are equivalent.
\begin{enumerate}
\item
	The shift $X$ is eventually dendric.
\item \label{item:equiv eventually dendric DP left}
	There exists $N \in \N$ such that for any left special word $v \in \cL_{\geq N}(X)$, there exists a unique letter $a \in \cA$ such that $va$ is left special. Moreover, $E^L_X(va) = E^L_X(v)$.
\item \label{item:equiv eventually dendric DP right}
	There exists $N \in \N$ such that for any right special word $v \in \cL_{\geq N}(X)$, there exists a unique letter $a \in \cA$ such that $av$ is right special. Moreover, $E^R_X(av) = E^R_X(v)$.
\end{enumerate}
\end{prop}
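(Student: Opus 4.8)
The plan is to prove the cycle $(1)\Rightarrow(2)\Rightarrow(1)$ and then obtain $(1)\Leftrightarrow(3)$ for free by left--right symmetry: reversing all the sequences of $X$ produces a shift $\widetilde X$ whose extension graphs are the transposes (swap of the two sides) of those of $X$, so $\widetilde X$ is eventually dendric exactly when $X$ is, and condition $(3)$ for $X$ is literally condition $(2)$ for $\widetilde X$ (left special words of $\widetilde X$ correspond to right special words of $X$, one-letter extensions to the right become one-letter extensions to the left, and $E^L_{\widetilde X}$ becomes $E^R_X$).

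For $(1)\Rightarrow(2)$ I would argue as follows. Assume $X$ is eventually dendric with threshold $N_0$, and for $n\ge N_0$ let $V_n$ be the set of left special factors of length $n$, each weighted by $\mu(v)=\Card E^L_X(v)-1\ge 1$. Two elementary remarks drive everything. First, since $E^L_X(vb)\subseteq E^L_X(v)$, any left special word of length $n+1$ has a left special length-$n$ prefix; hence the left special words in $\cL_{n+1}(X)$ are exactly the left special one-letter extensions of elements of $V_n$, with distinct elements of $V_n$ contributing disjoint families. Second, since every $v\in V_n$ is dendric, its extension graph $\cE_X(v)$ is a tree, and reading the degrees of its right vertices gives the conservation identity $\mu(v)=\sum_{b}\mu(vb)$, where $b$ ranges over the letters with $vb$ left special (an extension $vb$ that is not left special contributes $\Card E^L_X(vb)-1=0$, using that every factor has a left extension). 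Summing over $V_n$ shows that $\sum_{v\in V_n}\mu(v)$ is independent of $n\ge N_0$; and since each $v\in V_n$ has between $1$ and $\mu(v)$ left special extensions, the integer sequence $(\Card V_n)_{n\ge N_0}$ is nondecreasing and bounded by that constant, hence eventually constant, say for $n\ge N$. For such $n$ every $v\in V_n$ then has exactly one left special extension $va$, and from $\mu(va)=\mu(v)$ together with $E^L_X(va)\subseteq E^L_X(v)$ we get $E^L_X(va)=E^L_X(v)$; this is exactly $(2)$.

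For $(2)\Rightarrow(1)$ I would show directly that, under $(2)$ with threshold $N$, every factor of length $\ge N$ is dendric. A non-left-special $w$ has $\Card E^L_X(w)=1$, so $\cE_X(w)$ is a star centred at its single left vertex, hence a tree. If instead $w$ is left special of length $n\ge N$, let $a$ be the unique letter with $wa$ left special; then in $\cE_X(w)$ the right vertex $a^R$ is adjacent to $\{c\in\cA : cwa\in\cL(X)\}=E^L_X(wa)=E^L_X(w)$, i.e.\ to every left vertex, while every other right vertex $b^R$ satisfies that $wb$ is not left special, so $E^L_X(wb)$ is a singleton and $b^R$ is a leaf. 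Hence $\cE_X(w)$ is the star from $a^R$ to all left vertices together with one pendant edge at each remaining right vertex: it is connected and acyclic, so a tree. Thus all factors of length $\ge N$ are dendric and $X$ is eventually dendric with threshold at most $N$.

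I expect the only genuinely delicate point to be the conservation identity $\mu(v)=\sum_{b}\mu(vb)$ and its use in $(1)\Rightarrow(2)$: it is what replaces the a priori uncontrolled combinatorics of the set of left special factors by a finite, monotone, eventually constant sequence, after which the uniqueness of the left special continuation and the equality of left-extension sets are immediate. The converse implication is, by contrast, essentially a one-paragraph inspection of the extension graph, and condition $(3)$ then costs nothing.
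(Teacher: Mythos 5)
Your proof is correct. Note first that the paper itself gives no proof of this proposition: it is imported verbatim from Dolce and Perrin, so there is nothing internal to compare against, and your argument has to stand on its own --- which it does. The reduction of $(1)\Leftrightarrow(3)$ to $(1)\Leftrightarrow(2)$ by reversal is unproblematic. The heart of $(1)\Rightarrow(2)$ is the conservation identity $\mu(v)=\sum_{b\in E^R_X(v)}\mu(vb)$, which is exactly the statement that a tree on $\Card E^L_X(v)+\Card E^R_X(v)$ vertices has $\Card E^L_X(v)+\Card E^R_X(v)-1$ edges, read off by summing the degrees of the right vertices; this is the same bookkeeping as in Proposition~\ref{P:complexity} (the vanishing of the bilateral multiplicity in Equation~\eqref{eq:bilateral multiplicity}), and your use of it to make $\sum_{v\in V_n}\mu(v)$ constant, hence $(\Card V_n)_n$ nondecreasing and bounded, hence eventually constant, correctly forces exactly one left special continuation with $\mu(va)=\mu(v)$, and the set equality then follows from $E^L_X(va)\subseteq E^L_X(v)$ plus equal cardinalities. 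The converse $(2)\Rightarrow(1)$ is also right: the unique $a$ with $E^L_X(wa)=E^L_X(w)$ makes $a^R$ adjacent to every left vertex, every other right vertex is a leaf (its set of left extensions is a nonempty subset of a non-left-special word's extensions, hence a singleton), and a bipartite graph with at most one right vertex of degree $\geq 2$ and no isolated vertex is a tree once connected. The only points worth making fully explicit in a write-up are the two places where you silently use that every factor of a shift space extends on both sides (to get $\mu(vb)\geq 0$ and to get that non-special words have star-shaped, nonempty extension graphs); both are immediate since $X\subset\cA^{\Z}$.
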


\begin{cor}
A shift space $X$ is eventually dendric if and only if any long enough bispecial factor is ordinary.
\end{cor}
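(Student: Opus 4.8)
The plan is to derive this corollary directly from Proposition~\ref{P:equiv eventually dendric DP} together with the combinatorial identity in Proposition~\ref{P:complexity}. First I would observe that the equivalence to prove is between ``$X$ is eventually dendric'' and ``there exists $N$ such that every bispecial factor of length at least $N$ is ordinary''. The ``if'' direction is essentially immediate: if every long enough bispecial factor is ordinary, then every long enough factor that is bispecial satisfies Equation~\eqref{eq:bilateral multiplicity} (ordinary words always do, by the remark following Proposition~\ref{P:complexity}), and in fact an ordinary bispecial word is dendric since its extension graph, being contained in $(\{a\}\times\cA)\cup(\cA\times\{b\})$ with $(a,b)$ an edge, is a tree; non-bispecial words are trivially dendric; hence all factors of length at least $N$ are dendric, i.e.\ $X$ is eventually dendric.

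For the ``only if'' direction I would use the characterization in item~\eqref{item:equiv eventually dendric DP left} (and symmetrically \eqref{item:equiv eventually dendric DP right}) of Proposition~\ref{P:equiv eventually dendric DP}. Suppose $X$ is eventually dendric; pick $N$ large enough that both the left and right versions hold and also that every factor of length $\geq N$ is dendric. Let $w$ be a bispecial factor with $|w| \geq N$. I want to show $w$ is ordinary. The idea is that the left-special structure forces $E^L(w)$ to be ``inherited'' along a unique chain, and likewise on the right, and combining these two facts with dendricity of $w$ pins down the extension graph. Concretely: since $w$ is right special and $|w|\geq N$, there is a unique letter $a$ with $aw$ right special and $E^R(aw)=E^R(w)$; I would argue that this $a$ is the unique left extension of $w$ whose right-extension set is ``full'' in the relevant sense, and that for every other $b \in E^L(w)$ the word $bw$ has a single right extension. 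Symmetrically, there is a unique $b \in E^R(w)$ such that $wb$ is left special with $E^L(wb) = E^L(w)$, and for every other $c \in E^R(w)$ the word $wc$ has a single left extension. Translating these back to bi-extensions: every edge of $\cE(w)$ incident to a left vertex $b \ne a$ goes to the same fixed right vertex, and every edge incident to a right vertex $c \ne b$ comes from the same fixed left vertex; a short check using connectedness (from dendricity) shows these two ``fixed'' vertices must be exactly $a$ on the left and $b$ on the right, so $E(w) \subseteq (\{a\}\times\cA)\cup(\cA\times\{b\})$, i.e.\ $w$ is ordinary.

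The step I expect to be the main obstacle is making precise the claim that the uniqueness statements about left-special and right-special extensions translate cleanly into the ``star-shaped'' structure of the extension graph, and in particular verifying that the distinguished left vertex from the right-special analysis coincides with the left vertex forced by the left-special analysis. This requires carefully tracking, for a factor $w$ of length $\geq N$, which $bw$ (resp.\ $wc$) are themselves special and relating $E^R(bw)$ to the neighbourhood of $b^L$ in $\cE(w)$; one uses that $E^R(bw) \subseteq E^R(w)$ always, and that $bw$ non-special means $b^L$ has degree exactly $1$ in $\cE(w)$. Once this bookkeeping is in place, the identity $\Card(E(w)) - \Card(E^L(w)) - \Card(E^R(w)) + 1 = 0$ for ordinary words (or directly the tree count) closes the argument. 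I would also remark that this corollary is essentially a restatement of the ``regular bispecial condition'' of Dolce and Perrin, so it may suffice to cite~\cite{Dolce_Perrin:2021} for the harder implication and only spell out the easy direction; but giving the self-contained graph-theoretic argument above keeps the exposition uniform with the rest of Section~\ref{S:definitions}.
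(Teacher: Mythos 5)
Your argument is correct and follows the route the paper intends: the corollary is stated without proof as a direct consequence of Proposition~\ref{P:equiv eventually dendric DP}, and your derivation from items~\eqref{item:equiv eventually dendric DP left} and~\eqref{item:equiv eventually dendric DP right} (together with the observation that ordinary and non-bispecial words are dendric) is exactly the intended one. The coincidence issue you flag as the main obstacle in fact resolves immediately: the ``Moreover'' clauses $E^R(aw)=E^R(w)$ and $E^L(wb)=E^L(w)$ already force every degree-one left vertex to attach to $b^R$ and every degree-one right vertex to attach to $a^L$, so no appeal to connectedness or dendricity of $w$ is needed.
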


Thus, the image of an eventually dendric shift under a return morphism is also an eventually dendric shift, as stated in the following proposition.

\begin{prop}\label{P:image of eventually dendric}
Let $X$ be an eventually dendric shift over $\cA$ and $\sigma : \cA^* \to \cB^*$ a return morphism for $w$. The shift space $Y = \sigma \cdot X$ is an eventually dendric shift. More precisely, if $N$ satisfies Proposition~\ref{P:equiv eventually dendric DP} for $X$, then $Y$ is eventually dendric of threshold at most $\max_{a \in \cA} |\sigma(a)| (N+1) + |w| - 1$.
\end{prop}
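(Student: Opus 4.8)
The plan is to use the characterization of eventual dendricity via special words from Proposition~\ref{P:equiv eventually dendric DP}, applied on the right side (item~\eqref{item:equiv eventually dendric DP right}), and transport it from $X$ to $Y = \sigma \cdot X$ using the antecedent machinery of Propositions~\ref{P:definition of antecedent} and~\ref{P:image of graphs by phi}. First I would fix $N$ satisfying Proposition~\ref{P:equiv eventually dendric DP} for $X$ and set $M = \max_{a \in \cA}|\sigma(a)|(N+1) + |w| - 1$; the goal is to show that every right special word of $\cL(Y)$ of length at least $M$ has a unique right special extension by one letter, with the same set of right extensions.

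Let $u \in \cL_{\geq M}(Y)$ be right special. Since $|u| \geq M > |w|$ (assuming $d\ge 2$ forces some $|\sigma(a)|\ge 1$, and in any case $M \ge |w|$), $u$ is not $\sigma$-initial, so by Proposition~\ref{P:definition of antecedent} it has a unique antecedent decomposition $u = s\sigma(v)p$ with $(s,v,p) \in \cB^* \times \cL(X) \times w\cB^*$ and $E_{X,s,p}(v) \ne \emptyset$. The next step is a length estimate: because $|s| < \max_a|\sigma(a)|$ and $|p| \le \max_a|\sigma(a)| + |w| - 1$ (the prefix $p$ lies in $w\cB^*$ and is bounded by the length of some $\sigma(b)w$), the bound $|u| \ge M$ forces $|v| \ge N$. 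Indeed $|u| \le |s| + |\sigma(v)| + |p| \le \max_a|\sigma(a)| + |v|\max_a|\sigma(a)| + \max_a|\sigma(a)| + |w| - 1 = (|v|+2)\max_a|\sigma(a)| + |w| - 1$, so $|v| \ge N$ follows once this is compared with $M = (N+1)\max_a|\sigma(a)| + |w| - 1$; I would double-check the constants here, but the crude bound clearly gives $|v| \ge N - 1$ or so, which after possibly enlarging $M$ by a bounded amount suffices — the stated threshold should be verified to be exactly right or replaced by the correct explicit value.

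Now, since $u$ is right special, Proposition~\ref{P:image of graphs by phi} (and the remark following it) gives that $v$ is right special in $X$, and $E^R_Y(u) \subseteq \varphi^R_{\sigma,p}(E^R_X(v))$. Because $|v| \ge N$, Proposition~\ref{P:equiv eventually dendric DP}\eqref{item:equiv eventually dendric DP right} applies to $v$: there is a unique letter $a \in \cA$ with $av$ right special and $E^R_X(av) = E^R_X(v)$. The key claim is then that among the extended images of $va$ (equivalently, among the words $ub'$ with $b' \in E^R_Y(u)$ sitting over $av$), the right special ones are governed by the right special extension of $v$ by $a$, and that extending $v$ on the left by $a$ corresponds to extending $u$ on the left by a single letter of $\cB$ — namely the letter prepended to $s\sigma(v)p$ inside $\sigma(a)$, using $a \in \cA^L_{\sigma,s'}$ for the relevant suffix; one checks the antecedent of the unique right special one-letter extension $u\beta$ of $u$ is exactly $va$, and that $E^R_Y(u\beta) = \varphi^R_{\sigma,p}(E^R_X(av)) = \varphi^R_{\sigma,p}(E^R_X(v)) = E^R_Y(u)$, the last equality because the antecedents of $u$ and $u\beta$ share the same pair $(s,p)$ up to the harmless change on the left. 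Uniqueness of $\beta$ follows from uniqueness of $a$ together with injectivity of $\sigma$ on letters and the suffix-code property.

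The main obstacle I anticipate is the bookkeeping in this last step: verifying that the right-special one-letter extension $u\beta$ of $u$ has antecedent $va$ with the \emph{same} parameter $p$ (so that $\varphi^R_{\sigma,p}$ is literally the same partial map and the extension sets match on the nose), and handling the boundary case where extending $v$ by $a$ changes the decomposition data. This requires carefully unwinding the definition of extended image and the uniqueness clause in Proposition~\ref{P:definition of antecedent}, using Proposition~\ref{P:return morphism for two words} to control right specialness of $\sigma$-initial prefixes. The length/threshold computation is routine but must be pinned down to justify the explicit bound $\max_{a \in \cA}|\sigma(a)|(N+1) + |w| - 1$; if it comes out slightly off, the cleanest fix is to state the threshold as that expression and absorb any discrepancy into the (already non-tight) estimate of $|p|$.
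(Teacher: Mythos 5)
Your overall strategy --- verifying item~\ref{item:equiv eventually dendric DP right} of Proposition~\ref{P:equiv eventually dendric DP} for $Y$ by transporting it through the antecedent decomposition --- is not the paper's route, and the step you flag as ``the main obstacle'' is a genuine gap rather than routine bookkeeping. Two concrete problems. First, Proposition~\ref{P:image of graphs by phi} does not give $E^R_Y(u) = \varphi^R_{\sigma,p}(E^R_X(v))$: the extension graph of $u$ is the image of the subgraph $\cE_{X,s,p}(v)$ spanned by $E_X(v) \cap (\cA^L_{\sigma,s} \times \cA^R_{\sigma,p})$, so $E^R_Y(u)$ only sees those right extensions of $v$ coupled to a left extension lying in $\cA^L_{\sigma,s}$. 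Your chain $E^R_Y(\beta u) = \varphi^R_{\sigma,p}(E^R_X(av)) = \varphi^R_{\sigma,p}(E^R_X(v)) = E^R_Y(u)$ therefore needs a real argument, and the first link is exactly where the decomposition changes: depending on whether $\beta s$ is a proper suffix of some $\sigma(a)$ or equals a full $\sigma(a)$, the antecedent triplet of $\beta u$ is $(\beta s, v, p)$ or $(\eps, av, p)$, and both existence and uniqueness of the right special left-extension $\beta$ must be argued across these cases. None of this is done. Second, even if completed, establishing item~\ref{item:equiv eventually dendric DP right} from length $M$ on gives eventual dendricity but not, on its face, the stated threshold bound $M$: the threshold is about all factors of length at least $M$ being dendric, so you would still need to observe that the condition at a single bispecial $u$ forces $\cE_Y(u)$ to be a tree (or also run the left-hand condition and invoke ordinarity); you never make that connection.

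The paper's proof avoids all of this. Since $N$ satisfies Proposition~\ref{P:equiv eventually dendric DP} for $X$, every factor of $X$ of length at least $N$ is ordinary (this is the corollary following that proposition, noting that non-bispecial factors are trivially ordinary), and Remark~\ref{R:extended images when ordinary graph} says an ordinary factor only has dendric (indeed ordinary) extended images. So one only needs your first step --- the length computation showing that $|u| \geq \max_{a}|\sigma(a)|(N+1)+|w|-1$ forces the antecedent $v$ to satisfy $|v| \geq N$ --- and then $u$ is dendric immediately. That computation does come out exactly right provided you use $|s| \leq \max_{a}|\sigma(a)| - 1$ (the suffix $s$ is proper, since $\sigma(a) \in \cB^+s$ for $a \in \cA^L_{\sigma,s}$); your bound $|s| \leq \max_{a}|\sigma(a)|$ loses one unit and only yields $|v| \geq N-1$. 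I recommend replacing the entire second half of your argument by this ordinary-factor shortcut.
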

\begin{proof}
Let $K = \max_{a \in \cA} |\sigma(a)|$.
Every word $u$ in $\cL(Y)$ of length at least $K+|w|-1$ contains an occurrence of $w$ and thus, using Proposition~\ref{P:definition of antecedent}, has an antecedent $v \in \cL(X)$.
Furthermore, since $u$ is of the form $s \sigma(v) p$, if $u$ is of length at least $K (N+1) + |w| - 1$, then $|\sigma(v)| \geq K (N-1) + 1$, so the antecedent $v$ of $u$ is of length at least $N$. 
By hypothesis on $N$ and by Remark~\ref{R:extended images when ordinary graph}, $v$ then only has dendric extended images thus $u$ is dendric.
\end{proof}

\section{Other graphs characterizing dendricity}
\label{S:definition of G^L and G^R}

For any shift space, we introduce graphs which describe all the sets of left or right extensions for factors of length $n$ in the shift space.

\begin{defi}
Let $X$ be a shift space over $\cA$ and $n \geq 0$. The graph $G^L_n(X)$ (resp., $G^R_n(X)$) is the multi-graph with labeled edges such that
\begin{itemize}
\item
	its vertices are the elements of $\cA$,
\item
	for any $v \in \cL_n(X)$ and any distinct $a, b \in E^L_X(v)$ (resp., $a, b \in E^R_X(v)$) there is an (undirected) edge labeled by $v$ between the vertices $a$ and $b$. 
\end{itemize}
\end{defi}

\begin{ex}
Let $X$ be the Thue-Morse shift space over $\{0, 1\}$, i.e., the shift space generated by the substitution $\sigma: 0 \mapsto 01, 1 \mapsto 10$. The first few graphs $G^L_n(X)$ are given in Figure~\ref{F:exemple Thue-Morse}.
\begin{figure}
	\tikzset{node/.style={circle,draw,minimum size=0.5cm,inner sep=0pt}}
	\tikzset{title/.style={minimum size=0.5cm,inner sep=0pt}}

 	\begin{center}
	\begin{tikzpicture}
		\node[title](g0) {$G^L_0(X)$};
		\node[node](g00) [below left= 1.2cm and 0.3cm of g0] {$0$};
		\node[node](g01) [right= 1.5cm of g00] {$1$};
		\draw (g00) edge node[above,pos=.5] {$\varepsilon$} (g01);
		
		\node[title](g1) [right = 2.5cm of g0] {$G^L_1(X)$};
		\node[node](g10) [below left= 1.2cm and 0.3cm of g1] {$0$};
		\node[node](g11) [right= 1.5cm of g10] {$1$};
		\draw[-,bend right] (g10) edge node[below,pos=.5] {$1$} (g11);
		\draw[-,bend right] (g11) edge node[above,pos=.5] {$0$} (g10);
		
		\node[title](g2) [right = 2.5cm of g1] {$G^L_2(X)$};
		\node[node](g20) [below left= 1.2cm and 0.3cm of g2] {$0$};
		\node[node](g21) [right= 1.5cm of g20] {$1$};
		\draw[-,bend right] (g20) edge node[below,pos=.5] {$10$} (g21);
		\draw[-,bend right] (g21) edge node[above,pos=.5] {$01$} (g20);
		
		\node[title](g3) [right = 2.5cm of g2] {$G^L_3(X)$};
		\node[node](g30) [below left= 1.2cm and 0.3cm of g3] {$0$};
		\node[node](g31) [right= 1.5cm of g30] {$1$};
		\draw[-,bend right=10] (g30) edge node[below,pos=.5] {$100$} (g31);
		\draw[-,bend right=75] (g30) edge node[below,pos=.5] {$101$} (g31);
		\draw[-,bend right=75] (g31) edge node[above,pos=.5] {$010$} (g30);
		\draw[-,bend right=10] (g31) edge node[above,pos=.5] {$011$} (g30);
	\end{tikzpicture}
	\end{center}
	\caption{Graphs $G_n^L(X)$ for $n \in \{0, \dots, 3\}$.}
	\label{F:exemple Thue-Morse}
\end{figure}
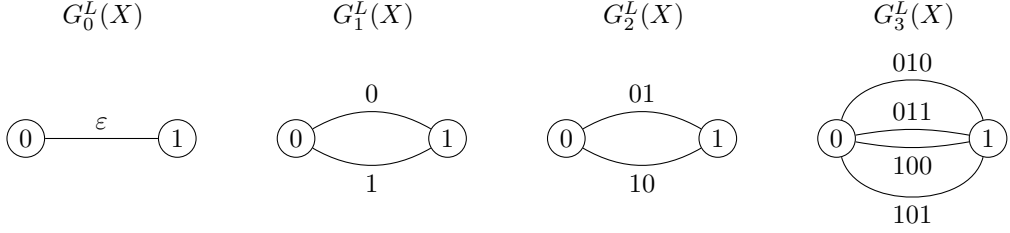
\end{ex}

\begin{rem}\label{R:basic properties of G_n}
By definition, the edges are only labeled by left (resp., right) special factors and the edges with a given label $v$ form a complete subgraph of $G^L_n(X)$ (resp., $G^R_n(X)$).
In addition, if $v = v_1 \cdots v_n$ labels an edge between $a$ and $b$ in $G^L_n(X)$ (resp., $G^R_n(X)$), then $v_1 \cdots v_k$ (resp., $v_{n-k+1} \cdots v_n$) labels an edge between $a$ and $b$ in $G^L_{k}(X)$ (resp., $G^R_{k}(X)$) for all $k \leq n$.
\end{rem}

We now explain the link between the properties of the extension graphs in $X$ and of the graphs $G^L_n(X)$, $G^R_n(X)$ regarding both acyclicity and connectedness.
We first need to define a notion of acyclicity for multi-graph with labeled edges. 

\begin{defi}
A multi-graph with labeled edges $G$ is \emph{acyclic for the labeling} if any simple cycle in $G$ only uses edges with the same label. 
\end{defi}

Recall that a cycle or a path is \emph{simple} if it uses distinct vertices (except for the first one and the last one in a cycle).

The following lemma is a direct consequence of the definition of the graphs $G^L_{n+1}(X)$ and $G^R_{n+1}(X)$.
\begin{lem} \label{L:relation between paths in extension graphs and G_n}
Let $X$ be a shift space and $v \in \cL_n(X)$.

The graph $\cE_X(v)$ contains the path $(a_1^L, b_1^R, a_2^L, \dots, b_k^R, a_{k+1}^L)$, with $a_i \ne a_{i+1}$, if and only if $G^L_{n+1}(X)$ contains the path
\[
    a_1 \overset{^{\mathlarger{vb_1}}}{\longdash} a_2 \dots \overset{^{\mathlarger{vb_k}}}{\longdash} a_{k+1}
\]

Symmetrically, the graph $\cE_X(v)$ contains the path $(a_1^R, b_1^L, a_2^R, \dots, b_k^L, a_{k+1}^R)$, with $a_i \ne a_{i+1}$, if and only if $G^R_{n+1}(X)$ contains the path
\[
    a_1 \overset{^{\mathlarger{b_1v}}}{\longdash} a_2 \dots \overset{^{\mathlarger{b_kv}}}{\longdash} a_{k+1}
\]
\end{lem}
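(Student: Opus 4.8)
The plan is to unwind both directions directly from the definitions, since the statement is purely a translation between edges of the extension graph $\cE_X(v)$ and labeled edges of $G^L_{n+1}(X)$ (resp.\ $G^R_{n+1}(X)$). First I would record the basic one-step correspondence: for $v \in \cL_n(X)$ and letters $a, a' \in \cA$, $b \in \cA$, the pair $(a^L, b^R)$ and $((a')^L, b^R)$ are both edges of $\cE_X(v)$ — equivalently $avb, a'vb \in \cL(X)$ — precisely when $a, a' \in E^L_X(vb)$, which by the definition of $G^L_{n+1}(X)$ (applied to the word $vb \in \cL_{n+1}(X)$) is exactly the condition that there is an edge labeled $vb$ between $a$ and $a'$ in $G^L_{n+1}(X)$. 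This is the content hidden in the phrase ``direct consequence of the definition''.

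Then I would prove the forward implication of the first equivalence. Suppose $\cE_X(v)$ contains the path $(a_1^L, b_1^R, a_2^L, b_2^R, \dots, b_k^R, a_{k+1}^L)$. Each consecutive triple $a_i^L, b_i^R, a_{i+1}^L$ consists of two edges $(a_i^L, b_i^R)$ and $(a_{i+1}^L, b_i^R)$ of $\cE_X(v)$, meaning $a_i v b_i, a_{i+1} v b_i \in \cL(X)$, so $a_i, a_{i+1} \in E^L_X(v b_i)$ with $a_i \ne a_{i+1}$ (consecutive vertices on a path in a bipartite graph are distinct on the same side precisely because the path alternates sides; here the $a_i$ are the left-side vertices and they are pairwise distinct along a path). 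Hence $v b_i$ labels an edge between $a_i$ and $a_{i+1}$ in $G^L_{n+1}(X)$, giving the path $a_1 \xrightarrow{v b_1} a_2 \xrightarrow{v b_2} \cdots \xrightarrow{v b_k} a_{k+1}$. For the converse, a path $a_1 \xrightarrow{v b_1} \cdots \xrightarrow{v b_k} a_{k+1}$ in $G^L_{n+1}(X)$ has each edge labeled by some word of length $n+1$; an edge with label $v b_i$ between $a_i$ and $a_{i+1}$ forces $a_i, a_{i+1} \in E^L_X(v b_i)$, i.e.\ both $(a_i^L, b_i^R)$ and $(a_{i+1}^L, b_i^R)$ are edges of $\cE_X(v)$, and concatenating these two-edge segments reconstructs the claimed path in $\cE_X(v)$. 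The symmetric statement for $G^R_{n+1}(X)$ is obtained by the same argument with the roles of prefix and suffix exchanged: an edge labeled $b_i v$ between $a_i$ and $a_{i+1}$ in $G^R_{n+1}(X)$ corresponds to $a_i, a_{i+1} \in E^R_X(b_i v)$, i.e.\ to the two edges $(a_i^R, b_i^L)$, $(a_{i+1}^R, b_i^L)$ of $\cE_X(v)$.

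There is no real obstacle here; the only point needing a little care is the bookkeeping of which side of the bipartite graph $\cE_X(v)$ hosts which vertices of the path, and the observation that along an alternating path the vertices appearing on a fixed side are genuinely the endpoints of the corresponding $G^L$- or $G^R$-edges (so that one does not accidentally produce a degenerate edge). Once the one-step dictionary is in place, both directions are just ``glue the one-step correspondences together along the path'', so I would keep the write-up short and essentially cite the definition of the graphs $G^L_{n+1}(X)$, $G^R_{n+1}(X)$.

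\begin{proof}
We prove the first equivalence; the second is obtained symmetrically by exchanging the roles of prefixes and suffixes.

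Recall that in the bipartite graph $\cE_X(v)$, the path $(a_1^L, b_1^R, a_2^L, \dots, b_k^R, a_{k+1}^L)$ consists of the edges $(a_i^L, b_i^R)$ and $(a_{i+1}^L, b_i^R)$ for $1 \leq i \leq k$, and being a path these force, in particular, $a_i \ne a_{i+1}$ for all $i$. Now $(a_i^L, b_i^R)$ and $(a_{i+1}^L, b_i^R)$ are edges of $\cE_X(v)$ if and only if $a_i v b_i \in \cL(X)$ and $a_{i+1} v b_i \in \cL(X)$, that is, if and only if $a_i, a_{i+1} \in E^L_X(v b_i)$. Since $v b_i \in \cL_{n+1}(X)$ and $a_i \ne a_{i+1}$, this is, by definition of $G^L_{n+1}(X)$, equivalent to the existence of an edge labeled $v b_i$ between $a_i$ and $a_{i+1}$ in $G^L_{n+1}(X)$.

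Hence, if $\cE_X(v)$ contains the path $(a_1^L, b_1^R, \dots, b_k^R, a_{k+1}^L)$, then each segment $a_i^L, b_i^R, a_{i+1}^L$ yields an edge $a_i \xrightarrow{v b_i} a_{i+1}$ in $G^L_{n+1}(X)$, and concatenating these gives the path $a_1 \xrightarrow{v b_1} a_2 \dots \xrightarrow{v b_k} a_{k+1}$ in $G^L_{n+1}(X)$.

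Conversely, a path $a_1 \xrightarrow{v b_1} a_2 \dots \xrightarrow{v b_k} a_{k+1}$ in $G^L_{n+1}(X)$ is made of edges labeled $v b_i$ between $a_i$ and $a_{i+1}$; by the equivalence above, each such edge gives $a_i, a_{i+1} \in E^L_X(v b_i)$, so both $(a_i^L, b_i^R)$ and $(a_{i+1}^L, b_i^R)$ are edges of $\cE_X(v)$. Concatenating these two-edge segments yields the path $(a_1^L, b_1^R, a_2^L, \dots, b_k^R, a_{k+1}^L)$ in $\cE_X(v)$.

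The proof of the second equivalence is identical, replacing $E^L_X(v b_i)$ by $E^R_X(b_i v)$, $G^L_{n+1}(X)$ by $G^R_{n+1}(X)$, and the edges $(a^L, b^R)$ of $\cE_X(v)$ by the edges $(a^R, b^L)$.
\end{proof}
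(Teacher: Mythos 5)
Your proof is correct and matches the paper's intent: the paper gives no proof of this lemma, stating only that it is a direct consequence of the definition of $G^L_{n+1}(X)$ and $G^R_{n+1}(X)$, and your write-up simply makes that one-step dictionary (two edges of $\cE_X(v)$ sharing the vertex $b_i^R$ correspond to an edge labeled $vb_i$ between $a_i$ and $a_{i+1}$) explicit and glues it along the path. The only cosmetic caveat is that in the converse direction the resulting sequence in $\cE_X(v)$ is a priori a walk (the $b_i$ need not be distinct), but this is exactly the level of informality the paper itself adopts and does not affect any of its uses of the lemma.
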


\begin{prop}\label{P:equiv acyclic}
Let $X$ be a shift space and $N \geq 0$. The following properties are equivalent.
\begin{enumerate}
\item
	The graph $\cE_X(v)$ is acyclic for all $v \in \cL_{< N}(X)$.
\item
	The graph $G^L_n(X)$ (resp. $G^R_n(X)$) is acyclic for the labeling for all $n \leq N$.
\end{enumerate}
\end{prop}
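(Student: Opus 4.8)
The plan is to prove the equivalence by relating cycles in the extension graphs $\cE_X(v)$ to cycles in the graphs $G^L_n(X)$ (and symmetrically $G^R_n(X)$) via Lemma~\ref{L:relation between paths in extension graphs and G_n}, which already translates paths on one side into paths on the other. The key observation is that a closed path in $\cE_X(v)$ of the ``alternating'' form starting and ending at a \emph{left} vertex, say $(a_1^L, b_1^R, a_2^L, \dots, b_k^R, a_1^L)$, corresponds exactly to a closed path $a_1 \xrightarrow{vb_1} a_2 \cdots \xrightarrow{vb_k} a_1$ in $G^L_{n+1}(X)$ all of whose edges share the length-$(n+1)$ prefix $v$. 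Conversely, a simple cycle in $G^L_{m}(X)$ that uses two edges with \emph{different} labels $v, v'$ (necessarily with $|v| = |v'| = m$) must, since edges with a common label form complete subgraphs (Remark~\ref{R:basic properties of G_n}), contain such a differently-labeled pair; the forbidden configuration is precisely a bad cycle, and I will show it forces a cycle in some extension graph $\cE_X(u)$ with $|u| < m$.

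First I would prove the implication $(1)\Rightarrow(2)$ by contraposition: suppose $G^L_n(X)$ is not acyclic for the labeling for some $n \leq N$, i.e. it contains a simple cycle $C$ using edges with at least two distinct labels. Pick such a cycle with $n$ minimal. Write the cycle as $a_1 \xrightarrow{v^{(1)}} a_2 \xrightarrow{v^{(2)}} \cdots \xrightarrow{v^{(k)}} a_1$ with all $|v^{(i)}| = n$ and not all $v^{(i)}$ equal. Each $v^{(i)} = u^{(i)} c_i$ for a left special word $u^{(i)} \in \cL_{n-1}(X)$ and a letter $c_i$; by Remark~\ref{R:basic properties of G_n} the prefix $u^{(i)}$ of length $n-1$ labels an edge between $a_i$ and $a_{i+1}$ in $G^L_{n-1}(X)$. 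If the $u^{(i)}$ were not all equal, $G^L_{n-1}(X)$ would contain a (not necessarily simple, but containing a simple) cycle with at least two labels, contradicting minimality of $n$ — here I need to be slightly careful to extract a genuinely simple sub-cycle that still uses two labels, which works because consecutive collapses of repeated vertices either keep a two-label cycle or shorten it. So all $u^{(i)}$ equal a common $u \in \cL_{n-1}(X)$, and by Lemma~\ref{L:relation between paths in extension graphs and G_n} the cycle $C$ is exactly a closed path $(a_1^L, c_1^R, a_2^L, \dots, c_k^R, a_1^L)$ in $\cE_X(u)$; since $C$ is simple in $G^L_n(X)$ the vertices $a_i$ are distinct, and since the $v^{(i)} = u c_i$ are not all equal the $c_i$ are not all equal, so this closed path visits at least two distinct right vertices and at least one left vertex, hence contains a genuine simple cycle in $\cE_X(u)$ with $|u| = n-1 < N$. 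This contradicts $(1)$.

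For $(2)\Rightarrow(1)$, again by contraposition: if some $\cE_X(v)$ with $|v| = n < N$ contains a simple cycle, that cycle is an alternating closed path; rotating it so it begins and ends at a left vertex gives $(a_1^L, b_1^R, \dots, b_k^R, a_1^L)$ with the $a_i^L$ distinct and the $b_j^R$ distinct and $k \geq 2$ (a tree/forest has no cycles, and a simple cycle in a bipartite graph has at least two vertices on each side). By Lemma~\ref{L:relation between paths in extension graphs and G_n} this yields a closed path $a_1 \xrightarrow{vb_1} a_2 \cdots \xrightarrow{vb_k} a_1$ in $G^L_{n+1}(X)$; its vertices are distinct so it is a simple cycle, and since $b_1, \dots, b_k$ are distinct (at least two of them) the labels $vb_1, \dots, vb_k$ are not all equal, so $G^L_{n+1}(X)$ is not acyclic for the labeling, with $n+1 \leq N$, contradicting $(2)$. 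The right-sided statements for $G^R_n(X)$ follow by the same argument using the symmetric halves of Lemma~\ref{L:relation between paths in extension graphs and G_n}.

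The main obstacle I anticipate is the bookkeeping in the minimality argument of $(1)\Rightarrow(2)$: passing from the (possibly non-simple) cycle obtained by projecting labels to length $n-1$ back to a \emph{simple} cycle that still carries two distinct labels, and making sure that when all length-$(n-1)$ prefixes coincide the resulting alternating walk in $\cE_X(u)$ really does contain an honest simple cycle (it could a priori be a walk that backtracks). Both points are handled by standard graph surgery — deleting repeated vertices from a closed walk and observing that a closed walk in a bipartite graph that is not reducible to a single edge contains a simple cycle — but they are the only non-formal part, so I would state them as a short preliminary observation before running the two contrapositives.
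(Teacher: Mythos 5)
Your proof is correct and follows essentially the same strategy as the paper: contraposition in both directions via Lemma~\ref{L:relation between paths in extension graphs and G_n}, reducing a multi-label simple cycle in $G^L_n(X)$ to a non-trivial cycle in some $\cE_X(u)$ by passing to a common prefix of the labels. The only (harmless) variations are organizational: the paper jumps in one step to the longest common prefix of all labels and reroots the cycle so that $b_1\neq b_k$, whereas you descend one level at a time by minimality of $n$ and then invoke the standard closed-walk-contains-a-cycle observation, which indeed goes through since each left vertex is visited exactly once and the final letters $c_i$ are not all equal.
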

\begin{proof}
Let us show the equivalence with the acyclicity of the graphs $G^L_n(X)$ by contraposition. 
The result with the graphs $G^R_n(X)$ is symmetric.
If $v \in \cL_n(X)$, $n < N$, is such that $\cE_X(v)$ contains a (non-trivial) cycle, then $G^L_{n+1}(X)$ contains a cycle by Lemma~\ref{L:relation between paths in extension graphs and G_n} and the edges of this cycle do not have the same label.
For the converse, assume that $G^L_n(X)$, $n \leq N$, contains a simple cycle
\[
    a_1 \overset{^{\mathlarger{u^{(1)}}}}{\longdash} a_2 \dots \overset{^{\mathlarger{u^{(k)}}}}{\longdash} a_1,
\]
with at least two distinct labels.
Let $v$ be the longest common prefix to all the $u^{(i)}$, $i \leq k$, and, for all $i \leq k$, let $b_i$ be the letter such that $vb_i$ is a prefix of $u^{(i)}$. 
By Remark~\ref{R:basic properties of G_n}, the graph $G^L_{|v| + 1}(X)$ thus contains the cycle
\[
    a_1 \overset{^{\mathlarger{vb_1}}}{\longdash} a_2 \dots \overset{^{\mathlarger{vb_k}}}{\longdash} a_{1}
\]
By definition of $v$, there exist $i < j$ such that $b_i \ne b_j$. We may assume that $b_1 \ne b_k$ because, if $b_1 = b_k$, since the subgraph of $G^L_{|v| + 1}(X)$ generated by the edges labeled by $vb_k$ is a complete graph, we can consider the path
\[
    a_2 \overset{^{\mathlarger{vb_2}}}{\longdash} a_3 \dots \overset{^{\mathlarger{vb_k}}}{\longdash} a_{2}
\]
instead.

By Lemma~\ref{L:relation between paths in extension graphs and G_n}, the graph $\cE_X(v)$ contains the cycle $(a_1^L,b_1^R,a_2^L,\dots,b_k^R,a_1^L)$ which is non-trivial as $b_1 \neq b_k$.
\end{proof}

We now look at the connectedness properties.

\begin{prop}\label{P:equiv connected with subgraphs}
Let $X$ be a shift, $N \geq 0$ and $C \subseteq \cA$. If the graph $\cE_X(v)$ is acyclic for all $v \in \cL_{<N}(X)$, then the following properties are equivalent.
\begin{enumerate}
\item
	For all $v \in \cL_{<N}(X)$ and all $a, b \in E^L_X(v) \cap C$, $a^L$ and $b^L$ are connected in $\cE_X(v)$ by a path avoiding vertices $c^L$, $c \notin C$.
\item
	For all $n \leq N$, the subgraph of $G^L_n(X)$ generated by the vertices in $C$ is connected.
\item
	The subgraph of $G^L_N(X)$ generated by the vertices in $C$ is connected.
\end{enumerate}
Similarly, the following are equivalent.
\begin{enumerate}
\item
	For all $v \in \cL_{<N}(X)$ and all $a, b \in E^R_X(v) \cap C$, $a^R$ and $b^R$ are connected in $\cE_X(v)$ by a path avoiding vertices $c^R$, $c \notin C$.
\item
	For all $n \leq N$, the subgraph of $G^R_n(X)$ generated by the vertices in $C$ is connected.
\item
	The subgraph of $G^R_N(X)$ generated by the vertices in $C$ is connected.
\end{enumerate}
\end{prop}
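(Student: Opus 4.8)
The plan is to prove the two chains of equivalences by establishing $(1)\Rightarrow(2)\Rightarrow(3)\Rightarrow(1)$ for the left version; the right version is symmetric and needs no separate argument. I would only treat the left case explicitly and end with the sentence "the proof for the graphs $G^R_n(X)$ is symmetric."

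For $(2)\Rightarrow(3)$ there is nothing to do, since $N \leq N$. For $(1)\Rightarrow(2)$, fix $n \leq N$ and two vertices $a, b \in C$ lying in the same connected component of the subgraph of $G^L_n(X)$ generated by $C$. By definition of $G^L_n(X)$, $a$ and $b$ are both left extensions of some word, and a path between them in the $C$-subgraph is a sequence of edges, each labeled by some left special word of length $n$, all of whose endpoints lie in $C$. The key point is to reduce, edge by edge, to a single word: an edge labeled $v$ between $c$ and $c'$ with $c,c'\in C$ means $c,c'\in E^L_X(v)\cap C$, hence by hypothesis $(1)$ (applied with this $v$, which has length $n<N$) the vertices $c^L$ and $c'^L$ are joined in $\cE_X(v)$ by a path avoiding all $e^L$ with $e\notin C$; translating this path back through Lemma~\ref{L:relation between paths in extension graphs and G_n} gives a walk from $c$ to $c'$ in $G^L_{n}(X)$... but here I must be careful: Lemma~\ref{L:relation between paths in extension graphs and G_n} relates paths in $\cE_X(v)$ to paths in $G^L_{|v|+1}(X)$, not $G^L_{|v|}(X)$. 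So the cleaner route is to work directly: for each edge $(c,c')$ of the given path in the $C$-subgraph of $G^L_n(X)$, I already have that single edge in $G^L_n(X)$; concatenating the edges of the original path is enough, so $(1)$ is in fact not needed for $(1)\Rightarrow(2)$ at all — $(2)$ for $n=N$ plus Remark~\ref{R:basic properties of G_n} (truncating labels) gives $(2)$ for all $n\leq N$, which is really $(3)\Rightarrow(2)$. I would therefore reorganize as $(3)\Rightarrow(2)$ (via label truncation, Remark~\ref{R:basic properties of G_n}), $(2)\Rightarrow(1)$, and $(1)\Rightarrow(3)$.

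The substantive implication is $(2)\Rightarrow(1)$: assuming all $C$-subgraphs of $G^L_n(X)$, $n\leq N$, are connected, and given $v\in\cL_{<N}(X)$ with $a,b\in E^L_X(v)\cap C$, I must produce a path in $\cE_X(v)$ from $a^L$ to $b^L$ avoiding $c^L$ for $c\notin C$. By $(2)$ applied to $n=|v|+1\leq N$, the vertices $a$ and $b$ lie in one component of the $C$-subgraph of $G^L_{|v|+1}(X)$, so there is a path $a = a_1 \xrightarrow{v^{(1)}} a_2 \cdots \xrightarrow{v^{(k)}} a_{k+1} = b$ with all $a_i\in C$ and each $v^{(i)}$ of length $|v|+1$. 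The obstacle is that the labels $v^{(i)}$ need not all equal $vb_i$ for a letter $b_i$ — a priori they are arbitrary left special words of length $|v|+1$. This is exactly the difficulty already handled inside the proof of Proposition~\ref{P:equiv acyclic}: I would replicate that device. Each $a_i, a_{i+1}\in E^L_X(v^{(i)})\subseteq E^L_X(v)$ since $v$ is a prefix of $v^{(i)}$; so it suffices to show that whenever $c,c'\in C$ are joined by an edge labeled $v^{(i)}$ in $G^L_{|v|+1}(X)$ with $v$ a prefix of $v^{(i)}$, then $c^L$ and $c'^L$ are joined in $\cE_X(v)$ avoiding $C^c$ on the left. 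Writing $v^{(i)} = vb_i$, Lemma~\ref{L:relation between paths in extension graphs and G_n} says $c$ and $c'$ are joined by the edge $c \xrightarrow{vb_i} c'$ in $G^L_{|v|+1}(X)$ iff $(c^L, b_i^R, c'^L)$ is a path in $\cE_X(v)$ — so $c^L$ and $c'^L$ are directly adjacent through $b_i^R$, and the only left vertices used are $c^L$ and $c'^L$, both in $C$. Concatenating over $i=1,\dots,k$ yields the required path in $\cE_X(v)$, all of whose left vertices are among $a_1,\dots,a_{k+1}\in C$. For $v$ with $|v| = N-1$ this uses $G^L_N(X)$, consistent with the indexing; for shorter $v$ it uses $G^L_{|v|+1}$ with $|v|+1\leq N-1<N$, fine.

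Finally $(1)\Rightarrow(3)$: take $n=N$ and $a,b$ in the same component of $G^L_N(X)$ restricted to nothing — rather, I want: if $a,b\in C$ are vertices of the $C$-subgraph of $G^L_N(X)$, they are connected there. Hmm, $(3)$ asserts connectedness of the whole $C$-subgraph of $G^L_N(X)$, which presupposes we show any two of its vertices are linked; a vertex $a\in C$ of this subgraph means $a\in E^L_X(v)$ for some left special $v\in\cL_N(X)$. Here the acyclicity hypothesis on $\cE_X(v)$ for $v\in\cL_{<N}(X)$ enters: I expect to need it to guarantee that the $C$-subgraph of $G^L_N(X)$ is not just "locally" but "globally" connected — concretely, to rule out that two $C$-vertices sit in genuinely different components. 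I would argue: given $a\in E^L_X(v)\cap C$ and $b\in E^L_X(v')\cap C$ with $v,v'\in\cL_N(X)$ left special, use $(1)$ on the empty word (length $0 < N$) to connect, within $\cE_X(\varepsilon)$ avoiding $C^c$ on the left, any two elements of $\cL_1(X)\cap C$ that appear as a first letter; then propagate up through $\cE_X(w)$ for prefixes $w$ of length $1,2,\dots,N-1$, at each stage using $(1)$ and Lemma~\ref{L:relation between paths in extension graphs and G_n} to convert a path in $\cE_X(w)$ into a path in $G^L_{|w|+1}(X)$, eventually reaching $G^L_N(X)$; acyclicity ensures these bipartite-graph paths are well-behaved (no need to worry about cycles collapsing the argument). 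The main obstacle throughout is bookkeeping the index shift between $\cE_X(v)$ and $G^L_{|v|+1}(X)$ and making sure every path produced in an extension graph genuinely avoids the forbidden left vertices; I do not expect any new idea beyond the label-truncation trick from Proposition~\ref{P:equiv acyclic} and careful use of Lemma~\ref{L:relation between paths in extension graphs and G_n}.
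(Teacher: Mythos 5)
Your reorganization into $(3)\Rightarrow(2)\Rightarrow(1)\Rightarrow(3)$ is legitimate, and your $(3)\Rightarrow(2)$ (label truncation via Remark~\ref{R:basic properties of G_n}) and your $(1)\Rightarrow(3)$ (induction on the length, replacing each edge of the $C$-subgraph at level $n$ by a path at level $n+1$ obtained from a path in the extension graph via Lemma~\ref{L:relation between paths in extension graphs and G_n}) both match what the paper does. But your $(2)\Rightarrow(1)$ has a genuine gap exactly at the point you flag and then wave away. You correctly observe that the connecting path in the $C$-subgraph of $G^L_{|v|+1}(X)$ carries labels $v^{(i)}$ that are a priori arbitrary left special words of length $|v|+1$, and then you assert ``$E^L_X(v^{(i)})\subseteq E^L_X(v)$ since $v$ is a prefix of $v^{(i)}$.'' That prefix claim is unjustified and is false in general: nothing forces the path witnessing connectedness at level $|v|+1$ to stay on edges whose labels extend $v$. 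The paper's own Remark~\ref{R:acyclic is necessary} is a counterexample to your argument as written: there $G^L_2(X)$ is connected, but the path joining $0$ and $1$ uses labels $10$ and $11$, neither of which has the word $v=0$ as a prefix, and indeed $\cE_X(0)$ is disconnected. The ``device'' from Proposition~\ref{P:equiv acyclic} that you invoke (truncating to the longest common prefix) applies to cycles, not to a path between two prescribed vertices, so it does not rescue the step.

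Any correct proof of $(2)\Rightarrow(1)$ must use the acyclicity hypothesis, which your argument never does in that step. The paper proceeds by contraposition: since $a,b\in E^L_X(v)\cap C$, there is an edge labeled $v$ between $a$ and $b$ in $G^L_{|v|}(X)$; acyclicity for the labeling then forces every simple path between $a$ and $b$ in $G^L_{|v|}(X)$ to use only $v$-labeled edges, hence (by truncation) every simple path between them in $G^L_{|v|+1}(X)$ --- in particular in its $C$-subgraph --- uses only labels in $v\cA$. Only then does Lemma~\ref{L:relation between paths in extension graphs and G_n} convert such a path into a path from $a^L$ to $b^L$ in $\cE_X(v)$ avoiding the forbidden left vertices, so that failure of $(1)$ forces the $C$-subgraph at level $|v|+1$ to be disconnected. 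This is the missing ingredient; with it inserted, the rest of your outline goes through. (A minor further point: you suggest acyclicity is also needed in $(1)\Rightarrow(3)$, but the inductive argument there works without it, as the paper notes.)
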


\begin{proof}
Let us show the first set of equivalences. We denote by $H_n$ the subgraph of $G^L_n(X)$ generated by the vertices in $C$.
Assume that the first property is satisfied. We prove that $H_n$ is connected by induction on $n$.
The graph $G^L_0(X)$ is a complete graph thus any subgraph is connected.
If $H_n$ is connected, then, to prove that $H_{n+1}$ is also connected, it suffices to show that any two vertices $a, b \in C$ that were connected by an edge in $H_n$ are connected by a path in $H_{n+1}$.
Let $a, b \in C$ be two such vertices and $v$ be the label of an edge between them. As $a$ and $b$ are left extensions of $v$, $a^L$ and $b^L$ are connected in $\cE_X(v)$ by a path avoiding vertices $c^L$, $c \notin C$. The vertices $a$ and $b$ are thus connected by a path in $H_{n+1}$ by Lemma~\ref{L:relation between paths in extension graphs and G_n}.
Note that the acyclic hypothesis is not needed for this implication.

For the converse, let us proceed by contraposition. Let $v \in \cL_n(X)$, $n < N$, be such that there exist two letters $a, b \in E^L_X(v) \cap C$ such that the vertices $a^L$ and $b^L$ are not connected in $\cE_X(v)$ by a path avoiding vertices $c^L$, $c \notin C$.
By definition, there is an edge labeled by $v$ between $a$ and $b$ in $G^L_n(X)$ thus, as this graph is acyclic for the labeling by Proposition~\ref{P:equiv acyclic}, any simple path between $a$ and $b$ in $G^L_n(X)$ only uses edges labeled by $v$.
By Remark~\ref{R:basic properties of G_n}, any simple path between $a$ and $b$ in $G^L_{n+1}(X)$ uses edges labeled by elements of $v\cA$. This is also true for the paths in $H_{n+1}$. However, using Lemma~\ref{L:relation between paths in extension graphs and G_n}, the existence of such path in $H_{n+1}$ would imply that $a^L$ and $b^L$ are connected in $\cE_X(v)$ by a path avoiding vertices $c^L$, $c \notin C$. Thus $H_{n+1}$ is not connected.

The equivalence between the second and the third properties follows from Remark~\ref{R:basic properties of G_n} as, with any path in $H_N$, we can associate a path in $H_n$ for $n \leq N$. 
\end{proof}

In particular, when $C = \cA$, Proposition~\ref{P:equiv connected with subgraphs} can be rewritten as follows.
\begin{prop}\label{P:equiv connected}
Let $X$ be a shift space and $N \geq 0$. If the graph $\cE_X(v)$ is acyclic for all $v \in \cL_{<N}(X)$, then the following properties are equivalent.
\begin{enumerate}
\item
	The graph $\cE_X(v)$ is connected for all $v \in \cL_{< N}(X)$.
\item
	The graph $G^L_n(X)$ (resp., $G^R_n(X)$) is connected for all $n \leq N$.
\item
	The graph $G^L_N(X)$ (resp., $G^R_N(X)$) is connected.
\end{enumerate}
\end{prop}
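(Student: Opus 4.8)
The plan is to obtain this as the special case $C = \cA$ of Proposition~\ref{P:equiv connected with subgraphs}. When $C = \cA$, the subgraph of $G^L_n(X)$ (resp.\ $G^R_n(X)$) generated by the vertices in $C$ is the whole graph, so items~(2) and~(3) of Proposition~\ref{P:equiv connected with subgraphs} become verbatim items~(2) and~(3) of the present statement. Moreover, the clause ``avoiding vertices $c^L$, $c \notin C$'' in item~(1) of Proposition~\ref{P:equiv connected with subgraphs} is vacuous when $C = \cA$, so that item~(1) there reads: for all $v \in \cL_{<N}(X)$ and all $a, b \in E^L_X(v)$, the vertices $a^L$ and $b^L$ lie in the same connected component of $\cE_X(v)$ (and symmetrically on the right).

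It therefore only remains to check that this last condition is equivalent to item~(1) here, namely that $\cE_X(v)$ is connected for every $v \in \cL_{<N}(X)$. One implication is trivial. For the converse I would invoke the elementary fact that $\cE_X(v)$ has no isolated vertex whenever $v \in \cL(X)$: if $b \in E^R_X(v)$, then $vb \in \cL(X)$, hence $vb$ occurs in some $x \in X$, so there is a letter $a$ with $avb \in \cL(X)$, i.e.\ $a^L$ is adjacent to $b^R$ in $\cE_X(v)$; symmetrically every left vertex has a neighbour. Consequently, if all the vertices $a^L$ with $a \in E^L_X(v)$ lie in a single component of $\cE_X(v)$, then every right vertex, being adjacent to one of them, lies in that component as well, so $\cE_X(v)$ is connected. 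The right-hand version is identical with the roles of left and right exchanged.

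Since the whole argument amounts to unwinding the definitions with $C = \cA$ and applying an already-proved proposition, there is no real obstacle; the only point needing a little care is the no-isolated-vertex remark, which is precisely what lets one pass between ``the left vertices all lie in one component'' and ``the extension graph is connected.''
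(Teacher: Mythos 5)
Your proposal is correct and follows exactly the paper's route: the paper obtains this statement as the special case $C = \cA$ of Proposition~\ref{P:equiv connected with subgraphs} without further comment. The only addition in your write-up is the explicit verification, via the no-isolated-vertex observation, that ``all left vertices lie in one component'' is equivalent to connectedness of $\cE_X(v)$ --- a small but legitimate point that the paper leaves implicit.
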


\begin{rem}\label{R:acyclic is necessary}
The result is false if we remove the acyclic hypothesis. Indeed, if a shift space $X$ is such that $\cL_3(X) = \{001, 010, 011, 100, 110, 111\}$, then the graph $\cE_X(0)$ is not connected. However, $G^L_2(X)$ is and, indeed, the graph $\cE_X(\varepsilon)$ contains a cycle (see Figure~\ref{F:acyclic hypothesis is necessary}) thus this does not contradict the previous result.
\begin{figure}
	\tikzset{node/.style={circle,draw,minimum size=0.5cm,inner sep=0pt}}
	\tikzset{title/.style={minimum size=0.5cm,inner sep=0pt}}

 	\begin{center}
	\begin{tikzpicture}
		\node[title](e0) {$\cE(0)$};
		\node[node](00l) [below left= 0.5cm and 0.5cm of e0] {$0$};
		\node[node](01l) [below= 0.7cm of 00l] {$1$};
		\node[node](01r) [right= 1.5cm of 00l] {$1$};
		\node[node](00r) [below= 0.7cm of 01r] {$0$};
		\path[draw,thick]
			(00l) edge node {} (01r)
			(01l) edge node {} (00r);
			
		\node[title](g2X) [right = 3cm of e0] {$G^L_2(X)$};
		\node[node](g0) [below left= 1cm and 0.3cm of g2X] {$0$};
		\node[node](g1) [right= 1.5cm of g0] {$1$};
		\draw[-,bend right] (g0) edge node[below,pos=.5] {$10$} (g1);
		\draw[-,bend right] (g1) edge node[above,pos=.5] {$11$} (g0);
			
		\node[title](ee) [right = 3cm of g2X] {$\cE(\varepsilon)$};
		\node[node](e0l) [below left= 0.5cm and 0.5cm of ee] {$0$};
		\node[node](e1l) [below= 0.7cm of e0l] {$1$};
		\node[node](e0r) [right= 1.5cm of e0l] {$0$};
		\node[node](e1r) [below= 0.7cm of e0r] {$1$};
		\path[draw,thick]
			(e0l) edge node {} (e0r)
			(e0l) edge node {} (e1r)
			(e1l) edge node {} (e0r)
			(e1l) edge node {} (e1r);
	\end{tikzpicture}
	\end{center}
	\caption{The acyclicity of $\cE_X(v)$ is necessary in Proposition~\ref{P:equiv connected}.}
	\label{F:acyclic hypothesis is necessary}
\end{figure}
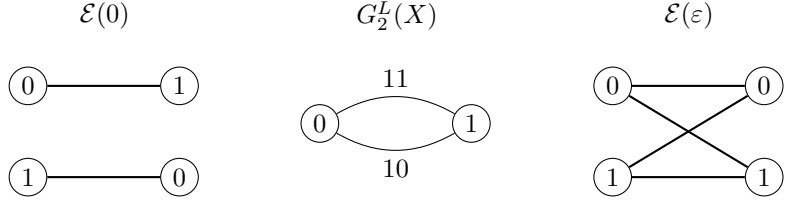
\end{rem}

As a direct consequence of Proposition~\ref{P:equiv acyclic} and Proposition~\ref{P:equiv connected}, we obtain the following characterization of dendric shifts.
\begin{cor}\label{C:equiv dendric graphs}
A shift space $X$ is dendric if and only if, for all $n \in \N$, the graph $G^L_n(X)$ (resp., $G^R_n(X)$) is acyclic for the labeling and connected.
\end{cor}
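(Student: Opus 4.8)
The plan is to deduce the statement directly from Propositions~\ref{P:equiv acyclic} and~\ref{P:equiv connected} by letting the threshold $N$ range over all of $\N$. Recall that $X$ is dendric precisely when, for every $v \in \cL(X)$, the extension graph $\cE_X(v)$ is a tree, i.e., both acyclic and connected; and since $\cL(X) = \bigcup_{N \geq 0} \cL_{<N}(X)$, this is equivalent to requiring that, for every $N \geq 0$, the graph $\cE_X(v)$ be acyclic and connected for every $v \in \cL_{<N}(X)$.

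First I would treat acyclicity. By Proposition~\ref{P:equiv acyclic}, for each fixed $N$ the graph $\cE_X(v)$ is acyclic for all $v \in \cL_{<N}(X)$ if and only if $G^L_n(X)$ (resp., $G^R_n(X)$) is acyclic for the labeling for all $n \leq N$. Quantifying over all $N$, this yields the global equivalence: $\cE_X(v)$ is acyclic for all $v \in \cL(X)$ if and only if $G^L_n(X)$ (resp., $G^R_n(X)$) is acyclic for the labeling for all $n \in \N$.

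Then, under the standing assumption that these acyclicity conditions hold --- which is exactly the hypothesis needed to invoke Proposition~\ref{P:equiv connected} --- that proposition gives, for each fixed $N$, that $\cE_X(v)$ is connected for all $v \in \cL_{<N}(X)$ if and only if $G^L_n(X)$ (resp., $G^R_n(X)$) is connected for all $n \leq N$ (equivalently, for $n = N$). Quantifying over $N$ again gives the corresponding global equivalence. Combining the two equivalences, $X$ is dendric if and only if $G^L_n(X)$ (resp., $G^R_n(X)$) is acyclic for the labeling and connected for all $n \in \N$. The only point requiring a little care is the order of application: Proposition~\ref{P:equiv connected} may be used only once acyclicity is secured, so in the ``if'' direction one first derives acyclicity of all $\cE_X(v)$ from Proposition~\ref{P:equiv acyclic}, and only then uses Proposition~\ref{P:equiv connected} to obtain connectedness; in the ``only if'' direction dendricity supplies acyclicity for free. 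There is no genuine obstacle here --- the corollary is a bookkeeping combination of the two preceding propositions.
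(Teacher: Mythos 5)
Your argument is correct and is precisely the route the paper intends: the corollary is stated there as a direct consequence of Propositions~\ref{P:equiv acyclic} and~\ref{P:equiv connected}, obtained by quantifying over all $N$ and applying the connectedness proposition only after acyclicity is established. Your remark on the order of application is exactly the one point of care, and you handle it correctly.
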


\section{Stabilization and eventually dendric shifts}
\label{S:stabilization for eventually dendric}

In the rest of the paper, we will be interested in whether two edges have the same label or not more than in the actual label of the edges. We will thus identify two multi-graphs $G$ and $G'$ with edges labeled by elements of $C$ and $C'$ respectively if they both have the same set of vertices and if there exists a bijection $\varphi : C \to C'$ such that there are $k$ edges labeled by $c \in C$ between $a$ and $b$ in $G$ if and only if there are $k$ edges labeled by $\varphi(c)$ between $a$ and $b$ in $G'$.

To make the distinction clearer, we will talk about multi-graphs with \emph{colored} edges instead of labeled edges.
Formally, a multi-graph with colored edges is a multi-graph with labeled edges, but we use the terminology ``colored'' to highlight the fact that we are not interested in the label but in the sets of edges having the same label.    
We naturally adapt the terminology of acyclicity for the labeling to acyclicity for the coloring.

Among the multi-graphs with colored edges, we will mainly be interested in graphs that can correspond to $G^L_n(X)$ (or $G^R_n(X)$) for some shift space $X$.
These graphs are exactly the ones that can be constructed as follows.

\begin{defi}\label{D:G(C_i)}
A multi-graph $G$ with colored edges and with set of vertices $V$ is a \emph{multi-clique} if there exist subsets $C_1, \dots, C_k$ of $V$ such that the set of edges is the union of the sets of colored edges $\left((C_i \times C_i) \setminus \diag(C_i)\right) \times \{c_i\}$, $i \leq k$, where $c_i$ is the color of the edges and $c_1, \dots, c_k$ are distinct colors. 
The multi-clique $G$ is then denoted $G(\{C_1, \dots, C_k\})$.
\end{defi}

\begin{ex}
If $C = \{0,1,2,3\}$, $C_1 = \{0,1\}$, $C_2 = \{1,2,3\}$ and $C_3 = \{0,3\}$, the multi-clique $G(\{C_1, C_2, C_3\})$ is represented in Figure~\ref{F:exemple graphe sous-ensembles}. It is not acyclic for the coloring because of the cycle $0,1,3,0$.

\begin{figure}
 	\begin{center}
	\begin{tikzpicture}
	
		\tikzstyle{every node}=[shape=circle, fill=none, draw=black,minimum size=20pt, inner sep=2pt]
		\node(0) at (0,0) {$0$};
		\node(1) at (2,0) {$1$};
		\node(2) at (2,-2) {$2$};
		\node(3) at (0,-2) {$3$};
		
		\draw[-,blue,thick] (0) to (1);
		\draw[-,red,thick] (1) to (2);
		\draw[-,red,thick] (1) to (3);
		\draw[-,red,thick] (3) to (2);
		\draw[-,thick] (0) to (3);
	\end{tikzpicture}
	\end{center}
	\caption{Multi-clique $G(\{\{0,1\}, \{1,2,3\}, \{0,3\}\})$}
	\label{F:exemple graphe sous-ensembles}
\end{figure}
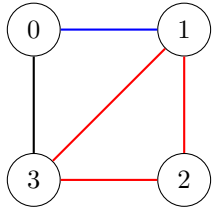
\end{ex}

With this definition, the colored version of $G^L_n(X)$ is the graph $G(\{E^L_X(v) : v \in \cL_n(X)\})$.
Remark that if $\Card(C_i) \leq 1$, then $G(\{C_1, \dots, C_k\}) = G(\{C_1, \dots, C_k\} \setminus \{C_i\})$ and, in particular, this shows once again that it suffices to consider the left-special factors to build $G^L_n(X)$.

In the case where the shift space $X$ is eventually dendric, the graphs $G^L_n(X)$ (resp., $G^R_n(X)$) are eventually constant, when seen as multi-graphs with colored edges. This is stated in the following result.

\begin{prop}\label{P:link eventually dendric with graphs}
Let $X$ be an eventually dendric shift. There exists $N \in \N$ such that, for all $n \geq N$,
\[
	G(\{E^L_X(v) \mid v \in \cL_n(X)\}) = G(\{E^L_X(v) \mid v \in \cL_N(X)\})
\]
\[
	\text{(resp., } G(\{E^R_X(v) \mid v \in \cL_n(X)\}) = G(\{E^R_X(v) \mid v \in \cL_N(X)\}\text{).}
\]
\end{prop}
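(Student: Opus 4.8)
The plan is to use the Dolce--Perrin characterization (Proposition~\ref{P:equiv eventually dendric DP}) to exhibit, beyond a suitable length, a length-preserving bijection between left special factors that preserves their sets of left extensions; this bijection will then serve directly as the recoloring identifying consecutive graphs $G^L_n(X)$.

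First I would fix $N$ so that item~\ref{item:equiv eventually dendric DP left} of Proposition~\ref{P:equiv eventually dendric DP} holds: every left special $v\in\cL_{\ge N}(X)$ admits a unique letter $a\in\cA$ with $va$ left special, and then $E^L_X(va)=E^L_X(v)$. Fix $n\ge N$ and let $\pi$ send a left special word of length $n+1$ to its prefix of length $n$. This is well defined into the set of left special words of length $n$: if $cw,dw\in\cL(X)$ with $c\ne d$ and $w=w'e$ has length $n+1$, then $cw',dw'\in\cL(X)$, so $w'$ is left special of length $n$. It is injective by the uniqueness in Proposition~\ref{P:equiv eventually dendric DP}: two left special words $va_1,va_2$ of length $n+1$ sharing the prefix $v$ (which is left special of length $\ge N$) must have $a_1=a_2$. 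It is surjective by the existence in the same item: for left special $v$ of length $n\ge N$ there is $a$ with $va$ left special, and $\pi(va)=v$. Hence $\pi$ is a bijection between the left special words of length $n+1$ and those of length $n$, and by the ``moreover'' clause it satisfies $E^L_X(u)=E^L_X(\pi(u))$ for all such $u$.

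I would then translate this into the graphs. Recall that the colored version of $G^L_n(X)$ is $G(\{E^L_X(v)\mid v\in\cL_n(X)\})$ and that only left special factors contribute edges, since a singleton or empty $C_i$ adds no edge. For $n\ge N$, the restriction of $\pi$ to left special words is precisely a bijection between the edge-colors of $G^L_{n+1}(X)$ and those of $G^L_n(X)$ that preserves, for each color, the set of its endpoints (namely $E^L_X(u)=E^L_X(\pi(u))$); hence it preserves every edge multiplicity between every pair of vertices. So $G^L_{n+1}(X)$ and $G^L_n(X)$ coincide as multi-graphs with colored edges, and by transitivity $G(\{E^L_X(v)\mid v\in\cL_n(X)\})=G(\{E^L_X(v)\mid v\in\cL_N(X)\})$ for all $n\ge N$. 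The statement for $G^R_n(X)$ is symmetric, using item~\ref{item:equiv eventually dendric DP right} instead; taking the larger of the two thresholds yields a single $N$ if desired.

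The argument is largely bookkeeping once Proposition~\ref{P:equiv eventually dendric DP} is granted; the only delicate points are checking the three properties that make $\pi$ a bijection and observing that it is this bijection on factors, rather than a mere equality of the underlying \emph{sets} of extension sets, that is doing the work, since distinct left special factors may carry the same left extension set (as already happens for the Thue--Morse shift, producing double edges in $G^L_n(X)$).
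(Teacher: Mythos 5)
Your proof is correct and follows exactly the paper's route: the paper's own proof is the one-line observation that it suffices to take $N$ satisfying condition~(2) (resp.\ condition~(3)) of Proposition~\ref{P:equiv eventually dendric DP}, and your argument is a careful unpacking of why that works, via the prefix bijection between left special factors of consecutive lengths. Your closing remark that one needs the bijection on factors (not merely equality of the sets of extension sets, since distinct special factors may share an extension set) is a worthwhile precision that the paper leaves implicit.
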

\begin{proof}
It suffices to take $N$ satisfying condition~\ref{item:equiv eventually dendric DP left} (resp., condition~\ref{item:equiv eventually dendric DP right}) of Proposition~\ref{P:equiv eventually dendric DP}.
\end{proof}

The previous result is not an equivalence as there exist non eventually dendric shift spaces $X$ whose graphs $G^L_n(X)$ and $G^R_n(X)$ are eventually constant, when seen as multi-graphs with colored edges. This is the case of the following example.

\begin{ex}
The Chacon ternary shift space over $\{0,1,2\}$ is generated by the morphism $\sigma: 0 \mapsto 0012, 1 \mapsto 12, 2 \mapsto 012$.
It is of complexity $2n + 1$~\citep{Fogg} thus, by Proposition~\ref{P:complexity}, for each length $n$, there is either a unique left special word with three extensions or two left special words with two extensions each. The graph $G^L_n(X)$ then contains three or two edges respectively.
Since the graph $G^L_1(X)$ only contains two edges between $0$ and $2$ (labeled by $0$ and $1$), using Remark~\ref{R:basic properties of G_n}, it will be the case of $G^L_n(X)$ for all $n \geq 1$.
We can similarly show that the graphs $G^R_n(X)$ correspond to the same colored graph for all $n \geq 1$. However, the Chacon shift space is not eventually dendric~\citep{Dolce_Perrin:2021}.
\end{ex}

We can however show that, if $X$ is eventually connected, i.e. the extension graph of any long enough factor is connected, and if the graphs $G^L_n(X)$ or $G^R_n(X)$ are eventually constant, then $X$ is eventually dendric.

\begin{defi}
Let $X$ be an eventually dendric shift. The graph $G^L(X)$ (resp., $G^R(X)$) is the graph $G(\{E^L_X(v) \mid v \in \cL_N(X)\})$ (resp., $G(\{E^R_X(v) \mid v \in \cL_N(X)\})$) where $N$ is as in Proposition~\ref{P:link eventually dendric with graphs}.
\end{defi}

\begin{rem}
Another way to see the graph $G^L(X)$ is to consider the right infinite words and their left extensions. More precisely, for a right infinite word $x$, we denote by $E^L_X(x)$ the set of letters $a$ such that $\cL(ax) \subseteq \cL(X)$. We then have
\[
	G^L(X) = G(\{E^R_X(x) \mid x \in \cA^\N\}).
\]
This is also related to the notion of asymptotic equivalence. Indeed, each edge of $G^L(X)$ corresponds to a (right) asymptotic pair, i.e. two elements $x, y \in X$ such that $x_n = y_n$ for each $n \geq 0$ and $x_{-1} \ne y_{-1}$.
\end{rem}

By Corollary~\ref{C:equiv dendric graphs}, if $X$ is a dendric shift, then $G^L(X)$ and $G^R(X)$ are acyclic for the coloring and connected. 
However, the converse is not true, as can be seen in the following example.

\begin{ex}
\label{Ex:not dendric with acyclic graphs}
Let $\sigma$ be the morphism defined by $\sigma(0)= 0110$ and $\sigma(1) = 011$. 
It is a return morphism for $01$. Let $X$ be the image under $\sigma$ of a Sturmian shift. It is not a dendric shift space since the graph $\cE_X(\varepsilon)$ contains a cycle but, as the $\sigma$-initial factors are of length at most 4 and as every factor of a Sturmian shift space is ordinary, $X$ is eventually dendric with threshold at most 4 by Remark~\ref{R:extended images when ordinary graph}.
We can then show that the only words which are not dendric are $\varepsilon$ and $1$ thus $G^L(X) = G^L_2(X)$ and $G^R(X) = G^R_2(X)$. These graphs are acyclic for the coloring and connected since $01$ (resp., $10$) is the only left (resp., right) special word of length 2.
\end{ex}

\section{Action of morphisms on $G^L(X)$ and $G^R(X)$}
\label{S:image of graphs}

In this section, we use the graphs $G^L(X)$ and $G^R(X)$ to study the action of a return morphism on a dendric shift $X$.
We first show that these graphs contain all the information needed to know whether the image $Y$ of $X$ under a return morphism is dendric. This gives us a simpler formulation for Proposition~\ref{P:equiv dendric image of dendric shift}.
We then explain how to use these graphs to obtain $G^L(Y)$ and $G^R(Y)$.

\begin{defi}
Let $G$ be a multi-graph (with colored or uncolored edges) with set of vertices $\cA$ and $\sigma : \cA^* \to \cB^*$ be a return morphism for $w$. For all $s \in \cT^L(\sigma)$, we define the graph $G^L_{\sigma, s}$ as the subgraph of $G$ generated by the vertices in $\cA^L_{\sigma, s}$.
Similarly, for all $p \in \cT^R(\sigma)$, the graph $G^R_{\sigma, p}$ is the subgraph of $G$ generated by the vertices in $\cA^R_{\sigma, p}$.

If $X$ is an eventually dendric shift over $\cA$, we will write $G^L_{\sigma, s}(X)$ instead of $(G^L(X))^L_{\sigma, s}$ and $G^R_{\sigma, s}(X)$ instead of $(G^R(X))^R_{\sigma, s}$.
\end{defi}

\begin{prop}\label{P:equiv dendric image of dendric shift with graphs}
Let $X$ be a dendric shift space and $\sigma$ a return morphism. The image $Y$ of $X$ under $\sigma$ is dendric if and only if $\sigma$ is dendric and for all $s \in \cT^L(\sigma)$ and all $p \in \cT^R(\sigma)$, the graphs $G^L_{\sigma, s}(X)$ and $G^R_{\sigma, p}(X)$ are connected.
\end{prop}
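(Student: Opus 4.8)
The plan is to deduce this from Proposition~\ref{P:equiv dendric image of dendric shift} by translating its two connectedness conditions into conditions on $G^L(X)$ and $G^R(X)$. Recall that Proposition~\ref{P:equiv dendric image of dendric shift} says $Y$ is dendric iff $\sigma$ is dendric and, for every $v \in \cL(X)$, every $s \in \cT^L(\sigma)$ and every $p \in \cT^R(\sigma)$, the graphs $\cE_{X,s,\eps}(v)$ and $\cE_{X,\eps,p}(v)$ are connected. So the only thing to prove is that, for a fixed $s \in \cT^L(\sigma)$ (with $C = \cA^L_{\sigma,s}$), the condition ``$\cE_{X,s,\eps}(v)$ is connected for all $v \in \cL(X)$'' is equivalent to ``$G^L_{\sigma,s}(X)$ is connected'', and symmetrically for $p$ and the right-hand side.

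First I would unwind the definitions: $\cE_{X,s,\eps}(v)$ is the subgraph of $\cE_X(v)$ generated by the edges of $E_{X,s,\eps}(v) = E_X(v) \cap (\cA^L_{\sigma,s} \times \cA^R_{\sigma,\eps})$. Since $\cA^R_{\sigma,\eps} = \{b \in \cA \mid \sigma(b)w \in \eps\cB^+\} = \cA$ (every word lies in $\eps \cB^+$), this set is just $E_X(v) \cap (\cA^L_{\sigma,s} \times \cA)$, i.e.\ we keep all right vertices but only the left vertices in $C := \cA^L_{\sigma,s}$ (and then drop isolated vertices). So ``$\cE_{X,s,\eps}(v)$ is connected'' is precisely the statement that all left vertices $a^L$, $a \in E^L_X(v) \cap C$, are connected in $\cE_X(v)$ by paths that may use any right vertex but no left vertex outside $C$ — which is condition~(1) of the left-hand equivalence in Proposition~\ref{P:equiv connected with subgraphs} with this $C$. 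Since $X$ is dendric, $\cE_X(v)$ is acyclic for every $v$, so the acyclicity hypothesis of that proposition holds for every $N$; taking $N \to \infty$ (or invoking the equivalence for each $N$ and using that $G^L(X) = G^L_N(X)$ for $N$ large by Proposition~\ref{P:link eventually dendric with graphs}), condition~(1) is equivalent to: for all $n$, the subgraph of $G^L_n(X)$ generated by the vertices in $C$ is connected. By Remark~\ref{R:basic properties of G_n} and the equivalence of items (2) and (3) in Proposition~\ref{P:equiv connected with subgraphs}, this stabilizes, and equals connectedness of the subgraph of $G^L(X)$ generated by $C = \cA^L_{\sigma,s}$, which by definition is $G^L_{\sigma,s}(X)$.

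The symmetric argument handles $\cE_{X,\eps,p}(v)$: here $\cA^L_{\sigma,\eps} = \cA$ (every word lies in $\cB^+ \eps = \cB^+$, bearing in mind $\sigma$ is non-erasing so $\sigma(a) \ne \eps$), so $\cE_{X,\eps,p}(v)$ keeps all left vertices and the right vertices in $C' := \cA^R_{\sigma,p}$, and ``$\cE_{X,\eps,p}(v)$ connected for all $v$'' matches condition~(1) of the right-hand equivalence in Proposition~\ref{P:equiv connected with subgraphs}, hence is equivalent to connectedness of the subgraph of $G^R(X)$ generated by $\cA^R_{\sigma,p}$, i.e.\ of $G^R_{\sigma,p}(X)$. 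Combining the two translations with Proposition~\ref{P:equiv dendric image of dendric shift} gives the claim. The only subtle point — the ``main obstacle'' — is the bookkeeping that $\cA^R_{\sigma,\eps} = \cA$ and $\cA^L_{\sigma,\eps} = \cA$ so that $\cE_{X,s,\eps}(v)$ and $\cE_{X,\eps,p}(v)$ really are the one-sided restrictions that Proposition~\ref{P:equiv connected with subgraphs} is about; once that identification is made, everything is a direct application of the results of Section~\ref{S:definition of G^L and G^R} together with the stabilization of $G^L_n(X)$, $G^R_n(X)$ for dendric (hence eventually dendric) $X$.
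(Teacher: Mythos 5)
Your proposal is correct and follows essentially the same route as the paper: reduce to Proposition~\ref{P:equiv dendric image of dendric shift}, then translate the connectedness of the graphs $\cE_{X,s,\eps}(v)$ and $\cE_{X,\eps,p}(v)$ into connectedness of $G^L_{\sigma,s}(X)$ and $G^R_{\sigma,p}(X)$ via Proposition~\ref{P:equiv connected with subgraphs} with $C = \cA^L_{\sigma,s}$ and $C = \cA^R_{\sigma,p}$, using that $G^L_n(X) = G^L(X)$ for large $n$. The bookkeeping you single out (that $\cA^R_{\sigma,\eps} = \cA^L_{\sigma,\eps} = \cA$, so these are genuinely one-sided restrictions) is left implicit in the paper but is exactly the right point to verify.
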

\begin{proof}
As $G_n^L(X) = G^L(X)$ for all large enough $n$, the graphs $\cE_{X, s, \eps}(v)$ and $\cE_{X, \eps, p}(v)$ are connected for all $v \in \cL(X)$ if and only if $G^L_{\sigma, s}(X)$ and $G^R_{\sigma, p}(X)$ are connected. Indeed, it is a direct consequence of Proposition~\ref{P:equiv connected with subgraphs} with $C = \cA^L_{\sigma,s}$ and $C = \cA^R_{\sigma,p}$ respectively. The conclusion then follows from Proposition~\ref{P:equiv dendric image of dendric shift}.
\end{proof}

\begin{defi}
Let $G$ be the multi-clique $G(\{C_1, \dots, C_k\})$ with set of vertices $\cA$ and $\sigma : \cA^* \to \cB^*$ be a return morphism. The \emph{left image} of $G$ by $\sigma$ is the multi-clique
\[
	\sigma^L(G) = G(\{\varphi^L_{\sigma, s}(C_i) \mid i \leq k, s \in \cT^L(\sigma)\})
\]
where the set of vertices is $\cB$ and the \emph{right image} of $G$ by $\sigma$ is the multi-clique
\[
	\sigma^R(G) = G(\{\varphi^R_{\sigma, p}(C_i) \mid i \leq k, p \in \cT^R(\sigma)\})
\]
where the set of vertices is $\cB$.
\end{defi}

\begin{ex}\label{Ex:image of graph}
Let $\beta : 0 \mapsto 0, 1 \mapsto 01, 2 \mapsto 02, 3 \mapsto 032$ be the morphism of Example~\ref{Ex:dendric return morphism}. We have $\cT^L(\beta) = \{\eps, 2\}$ and the associated partial maps are given by
\[
    \varphi^L_{\beta, \eps} :
    \begin{cases}
        \;\, 0 &\!\!\!\mapsto 0\\
        \;\,1 &\!\!\!\mapsto 1\\
        2, 3 &\!\!\!\mapsto 2
    \end{cases}
    \qquad
    \text{and}
    \qquad
    \varphi^L_{\beta, 2} :
    \begin{cases}
        2 \mapsto 0\\
        3 \mapsto 3.
    \end{cases}
\]
If $G = G(\{\{0,1\}, \{1,2,3\}\})$, then we have
\begin{align*}
    \beta^L(G)
    &= G\left(\left\{\varphi^L_{\beta, \eps}(\{0,1\}), \varphi^L_{\beta, \eps}(\{1,2,3\}), \varphi^L_{\beta, 2}(\{0,1\}), \varphi^L_{\beta, 2}(\{1,2,3\})\right\}\right)\\
    &= G(\{\{0,1\}, \{1,2\}, \{0,3\}\}).
\end{align*}
The graphs are represented in Figure~\ref{F:left image of graph}.
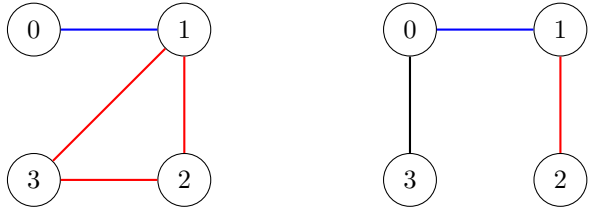
\begin{figure}
 	\begin{center}
	\begin{tikzpicture}
		\tikzset{node/.style={shape=circle, fill=none, draw=black,minimum size=20pt, inner sep=2pt}}
		\node[node](0) at (0,0) {$0$};
		\node[node](1) at (2,0) {$1$};
		\node[node](2) at (2,-2) {$2$};
		\node[node](3) at (0,-2) {$3$};
		
		\draw[-,blue,thick] (0) to (1);
		\draw[-,red,thick] (1) to (2);
		\draw[-,red,thick] (1) to (3);
		\draw[-,red,thick] (3) to (2);
		
		\node[node](0) at (5,0) {$0$};
		\node[node](1) at (7,0) {$1$};
		\node[node](2) at (7,-2) {$2$};
		\node[node](3) at (5,-2) {$3$};
		
		\draw[-,blue,thick] (0) to (1);
		\draw[-,red,thick] (1) to (2);
		\draw[-,thick] (0) to (3);
	\end{tikzpicture}
	\end{center}
	\caption{Graphs $G = G(\{\{0,1\},\{1,2,3\}\})$ (on the left) and $\beta^L(G)$ (on the right)}
	\label{F:left image of graph}
\end{figure}
\end{ex}

\begin{prop}\label{P:image of eventually dendric with graphs}
Let $X$ be an eventually dendric shift and $\sigma$ a return morphism for $w$. If $Y = \sigma \cdot X$, then $G^L(Y) = \sigma^L(G^L(X))$ and $G^R(Y) = \sigma^R(G^R(X))$.
\end{prop}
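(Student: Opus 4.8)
The plan is to establish the two equalities $G^L(Y) = \sigma^L(G^L(X))$ and $G^R(Y) = \sigma^R(G^R(X))$ separately; by symmetry I only discuss the left case. First I would recall that, since $Y = \sigma\cdot X$ is eventually dendric by Proposition~\ref{P:image of eventually dendric}, the graph $G^L(Y)$ is well defined and, by Proposition~\ref{P:link eventually dendric with graphs}, equals $G(\{E^L_Y(u) \mid u \in \cL_n(Y)\})$ for all sufficiently large $n$. The strategy is then to show that the \emph{set} of left extension sets appearing among long factors of $Y$ coincides, as a multi-clique, with $\{\varphi^L_{\sigma,s}(E^L_X(v)) \mid v \in \cL(X),\ s \in \cT^L(\sigma)\}$, which is exactly the defining family of $\sigma^L(G^L(X))$ once one notes that $G^L(X) = G(\{E^L_X(v) \mid v \in \cL_m(X)\})$ for large $m$ and that $\varphi^L_{\sigma,s}$ is only ever applied to sets $C_i = E^L_X(v)$.

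The two inclusions are handled as follows. For a long enough factor $u \in \cL(Y)$ (long enough that $|u|_w > 0$, which holds once $|u| \geq \max_a |\sigma(a)| + |w| - 1$ by Proposition~\ref{P:definition of antecedent}), let $(s,v,p)$ be the triplet of Proposition~\ref{P:definition of antecedent}; by the remark following Proposition~\ref{P:image of graphs by phi}, $E^L_Y(u) \subseteq \varphi^L_{\sigma,s}(E^L_X(v))$, and when $u$ is left special the relevant $s$ lies in $\cT^L(\sigma)$. To get \emph{equality} $E^L_Y(u) = \varphi^L_{\sigma,s}(E^L_X(v))$ for a suitably chosen extended image $u$ of $v$, I would take $v$ long (length $\geq N$ for the eventual-dendricity threshold $N$ of $X$) so that $\cE_X(v)$ is dendric, hence connected; by Proposition~\ref{P:extended images are dendric} (or directly Proposition~\ref{P:equiv dendric image of dendric shift with graphs} applied after noting $X$ eventually dendric reduces to the dendric tail), $\cE_{X,s,\eps}(v)$ is connected, so every pair in $\cA^L_{\sigma,s}$ that is a left extension of $v$ is joined inside that subgraph; applying $\varphi^L_{\sigma,s}$ via Proposition~\ref{P:image of graphs by phi} shows $\varphi^L_{\sigma,s}(E^L_X(v))$ is entirely realized as $E^L_Y(u')$ for the extended image $u' = s\sigma(v)p'$ with $p'$ chosen to keep all the relevant right extensions (e.g. $p' = w$, which imposes no restriction on the right). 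Conversely, every long left-special $u$ of $Y$ arises this way, giving the reverse inclusion of families. Matching multiplicities/colors is automatic because distinct $v$'s (or distinct $s$'s) give edges that we are free to recolor, per the colored-graph convention of Section~\ref{S:stabilization for eventually dendric}.

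The main obstacle I expect is bookkeeping around the choice of $s$ and $p$ and the passage from ``for all long $v$'' to ``for all $v$'': one must be careful that $G^L(X)$ is built from $E^L_X(v)$ with $v$ ranging over a single length $m \geq N$, whereas an arbitrary extended image $u$ of $Y$ has an antecedent $v$ whose length is not controlled, so I would invoke Remark~\ref{R:basic properties of G_n} (prefixes of labels reappear at smaller lengths, hence the colored multi-clique stabilizes) to reduce to antecedents of a fixed large length. A secondary subtlety is the case distinction in Proposition~\ref{P:definition of antecedent}: the $\sigma$-initial factors of $Y$ do not have antecedents, but they are all short (bounded by $\max_a|\sigma(a)| + |w|$), so they do not affect $G^L(Y)$, which only sees arbitrarily long factors. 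Once these reductions are in place, the equality of the two multi-cliques follows directly from Proposition~\ref{P:image of graphs by phi} and the connectedness input of Proposition~\ref{P:extended images are dendric}, and the right-hand identity $G^R(Y) = \sigma^R(G^R(X))$ is proved by the mirror argument using $\varphi^R_{\sigma,p}$ and $\cT^R(\sigma)$.
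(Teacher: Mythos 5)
Your overall route is the paper's: identify $G^L(Y)$ with the multi-clique built from the sets $E^L_Y(u)$ for left special $u$ of a fixed large length, use the antecedent triplet $(s,v,p)$ of Proposition~\ref{P:definition of antecedent} and the remark after Proposition~\ref{P:image of graphs by phi} for the inclusion $E^L_Y(u) \subseteq \varphi^L_{\sigma,s}(E^L_X(v))$, and realize every set $\varphi^L_{\sigma,s}(E^L_X(v))$ as some $E^L_Y(u)$ for the converse. The paper packages the length bookkeeping you flag into an explicit bijection $f$ between the left special factors of length $M$ in $Y$ and the pairs $(s,v)\in\cT^L(\sigma)\times\cL_N(X)$ with $\Card(\varphi^L_{\sigma,s}(E^L_X(v)))\geq 2$; the surjectivity step extends $v$ to a longer left special $v'$ with $E^L_X(v')=E^L_X(v)$ (possible by eventual dendricity) so that $s\sigma(v')w$ reaches length $M$. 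You should make that extension explicit, since $s\sigma(v)w$ itself is generally shorter than the length at which $G^L(Y)$ stabilizes; Remark~\ref{R:basic properties of G_n} alone handles the opposite direction (antecedents that are too long), not this one.

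The step that fails as justified is the claim that, because $v$ is dendric, Proposition~\ref{P:extended images are dendric} yields that $\cE_{X,s,\eps}(v)$ is connected. That proposition is an equivalence: connectedness of the graphs $\cE_{X,s,\eps}(v)$ is the \emph{condition} under which the extended images of a dendric $v$ are dendric, not a consequence of $v$ being dendric. An edge-generated subgraph of a tree can perfectly well be disconnected --- this is exactly the phenomenon that Proposition~\ref{P:equiv dendric image of dendric shift} and the graphs $\cG^L(\cS)$ are designed to control, and the present proposition makes no assumption guaranteeing it. (For \emph{long} factors of an eventually dendric shift the connectedness does happen to hold, but via ordinariness and Remark~\ref{R:extended images when ordinary graph}, not via the proposition you cite.) Fortunately the conclusion you draw from it does not need connectedness at all: taking $p=w$ one has $\cA^R_{\sigma,w}=\cA$, so every letter of $E^L_X(v)\cap\cA^L_{\sigma,s}$ is a non-isolated left vertex of $\cE_{X,s,w}(v)$, and Proposition~\ref{P:image of graphs by phi} directly gives $E^L_Y(s\sigma(v)w)=\varphi^L_{\sigma,s}(E^L_X(v))$ --- which is how the paper argues. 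So the error is repairable by deleting the connectedness detour, but as written that link in the chain is a non sequitur.
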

\begin{proof}
Let us prove the link between $G^L(X)$ and $G^L(Y)$.
By Proposition~\ref{P:image of eventually dendric}, $Y$ is also eventually dendric.
If we write $K = \max\{|\sigma(a)| : a \in \cA\}$,
let $N \in \N$ (resp., $M \in \N$) be large enough so that $N$ (resp., $M - K$) satisfies the condition~\ref{item:equiv eventually dendric DP left} of Proposition~\ref{P:equiv eventually dendric DP} for the shift $X$ (resp., $Y$).
Let us also assume that the antecedent of any word $u \in \cL(Y)$ of length at least $M$ is of length at least $N$.
By Proposition~\ref{P:link eventually dendric with graphs}, $G^L(X)$ (resp., $G^L(Y)$) is constructed with the left special factors of length $N$ (resp., $M$).

We define the application $f$ mapping any left special word $u \in \cL_M(Y)$ to the pair $(s_u, v) \in \cT^L(\sigma) \times \cL_N(X)$ where $(s_u, v_u, p_u)$ is the triplet associated with $u$ by Proposition~\ref{P:definition of antecedent} and $v$ is the prefix of length $N$ of $v_u$.

We will prove that $f$ is a bijection between the left special factors of length $M$ in $Y$ and the set
\[
	D = \{(s, v) \in \cT^L(\sigma) \times \cL_N(X) \mid \Card(\varphi^L_{\sigma, s}(E^L_X(v))) \geq 2\}.
\]
We will also show that, if $(s, v) = f(u)$, then
\[
	E^L_Y(u) = \varphi^L_{\sigma, s}(E^L_X(v))
\]
which will allow us to conclude since
\[
	\sigma^L(G^L(X)) = G(\{\varphi^L_{\sigma, s}(E^L_X(v)) \mid (s, v) \in D\}).
\]

Note that, if $f(u) = (s, v)$, then $s\sigma(v)w$ is a prefix of $u$, thus, by Proposition~\ref{P:definition of antecedent},
\[
	E^L_Y(u) \subseteq E^L_Y(s\sigma(v)w) = \varphi^L_{\sigma, s}(E^L_X(v))
\]
and $(s, v)$ is in $D$.

The application $f$ is injective. Indeed, if $f(u) = f(u') = (s, v)$, then $s_u = s = s_{u'}$ and $v$ is a prefix of both $v_u$ and $v_{u'}$. However, as $u$ and $u'$ are left special, $v_u$ and $v_{u'}$ must be left special. As they have a common prefix of length $N$ and $N$ is chosen large enough, it means that one is prefix of the other. Let us assume that $v_u$ is a prefix of $v_{u'}$. Then $s\sigma(v_u)w$ is a prefix of both $u$ and $u'$. Since $u$ is of length $M$, $s \sigma(v_u)w$ is a left special factor of length at least $M - K$ and, as $M$ is large enough, it is prefix of a unique left special factor of length $M$. This proves that $u = u'$ and that $f$ is injective.

We now prove the surjectivity. For any $(s, v) \in D$, by definition of $N$, there exists $v' \in v\cA^*$ left special such that $|s \sigma(v') w| \geq M$ and $E^L_X(v') = E^L_X(v)$.
If $u$ is the prefix of length $M$ of $s \sigma(v') w$, then $s_u = s$ and $v_u$ is a prefix of $v'$ of length at least $N$. Thus $s \sigma(v) w$ is a prefix of $u$. In addition, by Proposition~\ref{P:image of graphs by phi},
\[
	E^L_Y(u) \supseteq E^L_Y(s \sigma(v')w) = \varphi_{\sigma, s}^L(E^L_X(v')) = \varphi_{\sigma, s}^L(E^L_X(v)).
\]
In particular, $u$ is left special and $f(u) = (s, v)$. This proves that $f$ is surjective. Moreover, we have both inclusions thus $E^L_Y(u) = \varphi_{\sigma, s}^L(E^L_X(v))$.
\end{proof}

\section{$S$-adic characterization of minimal dendric shifts}
\label{S:S-adic characterization}

We recall the notion of $S$-adic representation and provide an $S$-adic characterization of both the dendric and the eventually dendric minimal shift spaces.

\subsection{$S$-adic representations}
\label{subsection:S-adic representations}
Let $\bsigma = (\sigma_n : \cA_{n+1}^* \to \cA_n^*)_{n \geq 0}$ be a sequence of morphisms such that 
$
\max_{a \in \cA_n} |\sigma_0 \circ \cdots \circ \sigma_{n-1}(a)|
$
goes to infinity when $n$ increases.
For $n < N$, we define the morphism $\sigma_{[n,N)} = \sigma_n \circ \sigma_{n+1} \circ \dots \circ \sigma_{N-1}$. The \emph{language $\cL^{(n)}({\bsigma})$ of level $n$ associated with $\bsigma$} is defined by 
\[
\cL^{(n)}({\bsigma}) = 
\left\{ w \in \cA_n^* \mid \mbox{$w$ occurs in $\sigma_{[n,N)}(a)$ for some $a \in \cA_N$ and $N>n$} \right\}.
\]

As $\max_{a \in \cA_N} |\sigma_{[n,N)}(a)|$ goes to infinity when $N$ increases, $\cL^{(n)}({\bsigma})$ defines a non-empty shift space $X_{\bsigma}^{(n)}$.
More precisely, $X_{\bsigma}^{(n)}$ is the set of points $x \in \cA_n^\mathbb{Z}$ such that $\cL (x) \subseteq \cL^{(n)}({\bsigma})$. 
Note that it may happen that $\cL(X_{\bsigma}^{(n)})$ is strictly contained in $\cL^{(n)}({\bsigma})$.
Also observe that for all $n$, $X_{\bsigma}^{(n)}$ is the image of $X_{\bsigma}^{(n+1)}$ under $\sigma_n$.

We set $X_{\bsigma} = X_{\bsigma}^{(0)}$ and call $\bsigma$ an {\em $S$-adic representation} of $X_{\bsigma}$. If we want to specify that the morphisms $\sigma_n$, $n \geq 0$, belong to a given family $\cS$, we say that $\bsigma$ is an $\cS$-adic representation.

We say that the sequence $\bsigma$ is {\em primitive} if, for any $n$, there exists $N>n$ such that for all $(a,b) \in \cA_n \times \cA_N$, $a$ occurs in $\sigma_{[n,N)}(b)$.
Observe that if $\bsigma$ is primitive, then $\min_{a \in \cA_n} |\sigma_{[0,n)}(a)|$ goes to infinity when $n$ increases, $\cL(X_{\bsigma}^{(n)})= \cL^{(n)}({\bsigma})$, and $X_{\bsigma}^{(n)}$ is a minimal shift space (see for instance~\citep[Lemma 7]{Durand:2000}). 

Given a minimal shift space $X$ over the alphabet $\cA$, we can build specific $S$-adic representations using return words.
Let $w \in \cL(X)$ be a non-empty word.
Recall that a return word to $w$ in $X$ is a non-empty word $r$ such that $rw \in \cL(X)$ and $rw$ contains exactly two occurrences of $w$, one as a prefix and one as a suffix.

We let $\cR_X(w)$ denote the set of return words to $w$ in $X$ and we omit the subscript $X$ whenever it is clear from the context.
The shift space $X$ being minimal, $\cR(w)$ is finite (see~\citep{Durand:1998}) thus we write $R_X(w) = \{1,\dots,\Card(\cR_X(w))\}$.
A morphism $\sigma: {R(w)}^* \to \cA^*$ is a {\em coding morphism} associated with $w$ if $\sigma(R(w)) = \cR(w)$.
It is trivially a return morphism for $w$.

Let us consider the set $\cD_w(X) = \{x \in {R(w)}^\Z \mid \sigma(x) \in X\}$.
It is a minimal shift space, called the {\em derived shift of $X$ (with respect to $w$)}.
We now show that derivation of minimal shift spaces allows us to build primitive $\cS$-adic representations with return morphisms.
We inductively define the sequences $(w_n)_{n \geq 0}$, $(R_n)_{n \geq 0}$, $(X_n)_{n \geq 0}$ and $(\sigma_n)_{n \geq 0}$ by
\begin{itemize}
\label{itemize S-adic}
\item
$X_0 = X$, $R_0 = \cA$ and $w_0 \in \cL(X) \setminus \{\varepsilon\}$;
\item
for all $n$, $R_{n+1} = R_{X_{n}}(w_n)$, $\sigma_n: R_{n+1}^* \to R_n^*$ is a coding morphism associated with $w_n$, $X_{n+1} = \cD_{w_n}(X_n)$ and $w_{n+1} \in \cL(X_{n+1}) \setminus \{\varepsilon\}$.
\end{itemize}

Observe that the sequence $(w_n)_{n \geq 0}$ is not uniquely defined, as well as the morphism $\sigma_n$ (even if $w_n$ is fixed).
However, to avoid heavy considerations when we deal with sequences of morphisms obtained in this way, we will speak about ``the'' sequence $(\sigma_n)_{n \geq 0}$ and it is understood that we may consider any such sequence. 

\begin{thm}[Durand~\citep{Durand:1998}]
\label{T:S-adic representation of minimal}
Let $X$ be a minimal shift space.
Using the notation defined above, the sequence of morphisms $\bsigma = (\sigma_n:R_{n+1}^* \to R_n^*)_{n \geq 0}$ is a primitive $S$-adic representation of $X$ using return morphisms.
In particular, for all $n$, we have $X_n = X_{\bsigma}^{(n)}$.
\end{thm}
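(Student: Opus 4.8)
The plan is to prove Theorem~\ref{T:S-adic representation of minimal} by unwinding the inductive construction and verifying the three assertions in turn: that $\bsigma$ is an $S$-adic representation, that it uses return morphisms, and that it is primitive. The first two are essentially immediate from the definitions, so the real content is primitivity, together with the identification $X_n = X^{(n)}_{\bsigma}$.

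First I would record that each $\sigma_n : R_{n+1}^* \to R_n^*$ is a coding morphism for $w_n$ in $X_n$, hence by the remark following the definition of coding morphism it is a return morphism for $w_n$; in particular it is non-erasing and injective, so $\bsigma$ is a sequence of (return) morphisms in the required sense. I also need $\max_{a \in R_n} |\sigma_{[0,n)}(a)| \to \infty$: this will follow once primitivity is established (primitivity forces $\min_{a\in R_n}|\sigma_{[0,n)}(a)|\to\infty$, as noted in the excerpt), but one can also see directly that $|\sigma_{[0,n)}(a)|$ is non-decreasing in $n$ and strictly increases infinitely often because a return word $w_n\in\cL(X_n)\setminus\{\varepsilon\}$ always has at least one return word of length $\geq 1$, and if all return words had length $1$ the alphabet would be a single letter, contradicting $\Card(R_{n+1})\geq 2$ (the running assumption that alphabets have cardinality at least $2$).

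Next, the key step: primitivity. Fix $n$. Since $X_{n+1} = \cD_{w_n}(X_n)$ is minimal, the word $w_{n+1}$ occurs in every long enough factor of $X_{n+1}$, and more precisely there is $M$ such that every word of $\cL(X_{n+1})$ of length $M$ contains every letter of $R_{n+2}$ (minimality gives uniform recurrence of each letter). Pushing forward by $\sigma_{[n,n+2)}$ and using that $X_n$ is the image of $X_{n+2}$ under $\sigma_{[n,n+2)}$, every sufficiently long factor of $X_n$ — in particular $\sigma_{[n,N)}(b)$ for $N$ large and any $b\in R_N$, since $|\sigma_{[n,N)}(b)|\to\infty$ uniformly — contains $\sigma_{[n,n+2)}(c)$ for every $c\in R_{n+2}$, and the union of the $\sigma_{[n,n+2)}(c)$ covers all letters of $R_n$ because $\sigma_n$ is minimal (every letter of $R_n$ appears in some $\sigma_n$-image, i.e. in some return word to $w_n$, as the return words tile occurrences of $w_n$ in points of $X_n$). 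Iterating this argument from level $n$ up, for each $n$ there is $N>n$ with every $a\in R_n$ occurring in $\sigma_{[n,N)}(b)$ for all $b\in R_N$, which is exactly primitivity. The main obstacle here is the bookkeeping: making the ``sufficiently long'' thresholds uniform across letters $b\in R_N$ and chaining them through the levels, which is where one must invoke uniform recurrence in the minimal derived shifts rather than mere recurrence.

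Finally I would establish $X_n = X^{(n)}_{\bsigma}$. By construction $X^{(n+1)}_{\bsigma}$ is the image of nothing — rather, $X^{(n)}_{\bsigma}$ is the image of $X^{(n+1)}_{\bsigma}$ under $\sigma_n$, and likewise $X_n = \sigma_n\cdot X_{n+1}$ because $X_{n+1}=\cD_{w_n}(X_n)=\{x\in R_{n+1}^{\Z}\mid \sigma_n(x)\in X_n\}$ and $\sigma_n$ is a return morphism (so the image shift $\sigma_n\cdot X_{n+1}$ recovers exactly $X_n$ by recognizability of return morphisms / minimality). Since $\bsigma$ is primitive, $\cL(X^{(n)}_{\bsigma})=\cL^{(n)}(\bsigma)$ and each $X^{(n)}_{\bsigma}$ is minimal; comparing languages level by level and using that both sequences $(X_n)$ and $(X^{(n)}_{\bsigma})$ satisfy the same ``image under $\sigma_n$'' relation together with $X_0=X=X^{(0)}_{\bsigma}$, an induction (or a direct check that $w\in\cL^{(n)}(\bsigma)$ iff $w$ occurs in some $\sigma_{[n,N)}(a)$ iff $w\in\cL(X_n)$, using minimality of $X_n$) yields $X_n = X^{(n)}_{\bsigma}$ for all $n$. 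This last part is routine once primitivity is in hand, so I expect the primitivity argument to be the crux.
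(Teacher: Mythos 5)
The paper offers no proof of this statement to compare against: it is imported verbatim from Durand's work, so I can only judge your proposal on its own terms. Your outline follows the standard route (the $\sigma_n$ are return morphisms by construction, primitivity via uniform recurrence of the minimal $X_n$, and $X_n = X_{\bsigma}^{(n)}$ by comparing languages once primitivity is known), and the parts you declare routine are indeed routine.

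The genuine gap sits exactly at the clause ``since $|\sigma_{[n,N)}(b)|\to\infty$ uniformly''. This uniform-in-$b$ growth, i.e.\ $\min_{b\in R_N}|\sigma_{[n,N)}(b)|\to\infty$, is the crux of primitivity, and you assert it rather than prove it. Your earlier ``direct'' argument only concerns the maximum (and even there, ruling out that \emph{all} return words at a level have length $1$ does not show that the letter realizing the current maximum is fed into a return word of length at least $2$), whereas the primitivity argument needs \emph{every} $\sigma_{[n,N)}(b)$ to become long. The minimum can fail to grow at any single step: a level can perfectly well have a length-one return word (the return words to $0$ in the Fibonacci shift are $0$ and $01$), so $\min_b|\sigma_N(b)|=1$ for every $N$ is possible, and non-erasingness plus uniform recurrence alone do not force the compositions to grow. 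Your diagnosis that the only obstacle is ``bookkeeping'' with uniform recurrence is therefore not accurate. The missing ideas are: (i) the identification of $\sigma_{[n,N)}(R_N)$ with the set of return words in $X_n$ to a single word $v_N$ with $|v_N|\to\infty$ --- this is precisely the ``derived of derived'' statement the paper records as Lemma~\ref{lemma:derived of derived}; and (ii) an aperiodicity argument: a return word to $v_N$ of length $\ell<|v_N|$ forces $v_N$ to have period $\ell$, and in an aperiodic minimal shift factors with a fixed period have bounded length, so for $N$ large every return word to $v_N$ has length at least the recurrence constant of $X_n$ and hence, being itself a factor of $X_n$, contains every letter of $R_n$. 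Aperiodicity must enter somewhere --- for a periodic minimal shift the quantity $\max_a|\sigma_{[0,n)}(a)|$ stabilizes and the statement degenerates --- yet your sketch never invokes it. A minor further slip: ``every word of $\cL(X_{n+1})$ of length $M$ contains every letter of $R_{n+2}$'' is a type mismatch; you mean it contains every return word $\sigma_{n+1}(c)$, $c\in R_{n+2}$, as a factor.
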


Conversely, if $\bsigma$ is an $S$-adic representation of a minimal shift space $X$ and $\bsigma$ contains only return morphisms, then one can choose words $w_n$ such that $\bsigma$ is defined as above. Hence, it is primitive.

In the case of minimal dendric shifts, we have stronger properties for the $S$-adic representation $\bsigma$.
Recall that if $F_\cA$ is the free group generated by $\cA$, an automorphism $\alpha$ of $F_\cA$ is {\em tame} if it belongs to the monoid generated by the permutations of $\cA$ and by the {\em elementary automorphisms} 
\[
	\begin{cases}
		a \mapsto ab, \\
		c \mapsto c, & \text{for } c \neq a,
	\end{cases}
\qquad \text{and} \qquad
	\begin{cases}
		a \mapsto ba, \\
		c \mapsto c, & \text{for } c \neq a.
	\end{cases}
\]

\begin{thm}[Berthé et al.~\citep{bifix_decoding}]
\label{T:decoding dendric}
Let $X$ be a minimal dendric shift over the alphabet $\cA = \{1,\dots,d\}$.
For any $w \in \cL(X)$, $\cD_w(X)$ is a minimal dendric shift over $\cA$ and the coding morphism associated with $w$ is a tame automorphism of $F_\cA$.
As a consequence, if $\bsigma = (\sigma_n)_{n \geq 0}$ is the primitive $S$-adic representation from Theorem~\ref{T:S-adic representation of minimal}, then all morphisms $\sigma_n$ are tame dendric return morphisms.
\end{thm}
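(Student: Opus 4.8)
The statement splits into a single-step part — concerning one derivation $\cD_w(X)$ — and a ``global'' part deducing the property all along the directive sequence by induction; I would treat them in that order.

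For the single-step part I would rely on the structure theory of minimal dendric shifts from~\cite{acyclic,finite_index,bifix_decoding}, whose relevant ingredients are: (i) minimality of $X$ makes $\cR_X(w)$ finite and $\cD_w(X)$ again minimal, by standard derivation arguments; (ii) the Return Theorem, which says that $\cR_X(w)$ is a basis of $F_\cA$ — in particular $\Card \cR_X(w) = d$ — so that, after identifying $R(w)$ with $\cA$, the coding morphism $\sigma$ is an endomorphism of $\cA^*$ that extends to an automorphism of $F_\cA$; (iii) the effect of ``first-return decoding'' on extension graphs, which simultaneously yields that every $\cE_{\cD_w(X)}(v)$ is again a tree and, by writing the decoding as a composition of elementary derivations (successive derivations by return words to prefixes/suffixes of $w$, added one letter at a time), that $\sigma$ is tame. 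The heart of the matter — and the step I expect to be the real obstacle if one wanted a self-contained argument — is (iii); here I would simply cite~\cite{bifix_decoding}.

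For the global part I would argue by induction on $n$ that $X_n$ is minimal dendric over $\cA$ and that $\sigma_n$ is a tame dendric return morphism. By Theorem~\ref{T:S-adic representation of minimal}, $\bsigma$ is a primitive $S$-adic representation of $X$ with $X_n = X_{\bsigma}^{(n)}$, $X_0 = X$, $X_{n+1} = \cD_{w_n}(X_n)$, and $\sigma_n$ a coding morphism for $w_n \in \cL(X_n)\setminus\{\varepsilon\}$. The base case is the hypothesis on $X$. For the inductive step, assuming $X_n$ minimal dendric over $\cA$, the single-step part applied to $X_n$ and $w_n$ gives at once that $X_{n+1} = \cD_{w_n}(X_n)$ is minimal dendric over $\cA$ and that $\sigma_n$ is a tame automorphism of $F_\cA$; and $\sigma_n$ is trivially a return morphism for $w_n$, since $\sigma_n(r)w_n = rw_n$ has exactly two occurrences of $w_n$ for each $r \in R(w_n)$.

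The only point not handed to us directly is that $\sigma_n$ is \emph{dendric} as a return morphism, and I would settle it using Proposition~\ref{P:definition of antecedent}: if $u$ is a $\sigma_n$-initial factor, then $|u|_{w_n}=0$ and $u \in \cL(\sigma_n) \subseteq \cL(X_n)$, and since $X_n = \sigma_n \cdot X_{n+1}$ the first item of that proposition gives $\cE_{\cL(\sigma_n)}(u) = \cE_{X_n}(u)$, which is a tree because $X_n$ is dendric; hence $u$ is dendric in $\cL(\sigma_n)$, so $\sigma_n$ is a dendric return morphism. This closes the induction and yields the claim.
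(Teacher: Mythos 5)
The paper states this theorem as an imported result from~\cite{bifix_decoding} and gives no proof of its own, so your proposal matches its approach exactly where it matters: you correctly delegate the genuinely hard single-step content (the Return Theorem giving $\Card\cR_X(w)=d$, preservation of minimality and dendricity under derivation, and tameness of the coding morphism) to that reference. The two pieces you supply yourself --- the induction along the directive sequence, and the verification via the first item of Proposition~\ref{P:definition of antecedent} that each $\sigma_n$ is a \emph{dendric} return morphism because its $\sigma_n$-initial factors have the same extension graphs in $\cL(\sigma_n)$ as in the dendric shift $X_n = \sigma_n \cdot X_{n+1}$ --- are both correct and are precisely the glue the paper leaves implicit.
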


The previous result is true for any sequence $\bsigma$ build as above. However, some choices for the words $w_n$ give additional properties to the morphisms $\sigma_n$.

In particular, if $w_n$ is a letter, the morphism $\sigma_n$ is \emph{strongly left proper}, i.e. the image of each letter begins with the letter $w_n$ which does not appear elsewhere.
Moreover, if $w_n$ is a bispecial letter, then $w_n \in \cT^R(\sigma_n)$ and $\varepsilon \in \cT^L(\sigma_n)$. We can always chose such a $w_n$ as shown in the following results. The first one is a trivial lemma relying on the minimality of $X$.

\begin{lem}\label{L:subset of non left special letters}
Let $X$ be a minimal shift space over $\cA$. If there exist letters $a_1, \dots, a_k$ such that $a_{i+1}$ is the only right (resp., left) extension of $a_i$ for all $i < k$, and $a_1$ is the only right (resp., left) extension of $a_k$, then $\cA = \{a_1, \dots, a_k\}$.
In particular, $X$ does not have any right (resp., left) special letter.
\end{lem}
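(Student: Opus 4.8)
The plan is to argue by contradiction using minimality together with the hypothesis that the sequence $a_1 \to a_2 \to \cdots \to a_k \to a_1$ is "forced" by the right-extension function. First I would treat the right-extension case; the left-extension case is symmetric (or follows by passing to the reversal shift). Suppose $\cA \neq \{a_1,\dots,a_k\}$. Consider the word $u = a_1 a_2 \cdots a_k a_1$: by the hypothesis that $a_{i+1}$ is the unique right extension of $a_i$ for $i<k$ and $a_1$ is the unique right extension of $a_k$, every occurrence of $a_1$ in any $x \in X$ is immediately followed by $a_2 a_3 \cdots a_k a_1$, hence by periodicity every occurrence of $a_1$ is followed by an infinite periodic tail $(a_2 \cdots a_k a_1)^\omega$. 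The key step is then to observe that $a_1$ must occur in $\cL(X)$ (it is a letter of $\cA$, and since $X$ is over $\cA$ we have $\cL_1(X)=\cA$), so some $x \in X$ contains $a_1$; by the above, $x$ has a right-infinite tail of the form $w (a_1 a_2 \cdots a_k)^\omega$.

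Next I would use minimality to derive a contradiction. Since $X$ is minimal, every word of $\cL(X)$ occurs in every $x \in X$ with bounded gaps; in particular the periodic point $p = {}^\omega(a_1 \cdots a_k)(a_1 \cdots a_k)^\omega \in \cA^\Z$ lies in $X$ (it is the limit of shifts of the tail just constructed, and $X$ is closed and shift-invariant; alternatively, the orbit closure of that tail is all of $X$ by minimality, and it is the finite set of shifts of $p$). But then $X$ is a finite shift space, so $\cL(X)$ consists only of factors of $p$, which are exactly the factors of $(a_1 \cdots a_k)^\omega$, i.e. only letters among $a_1,\dots,a_k$ appear. This contradicts $\cA \neq \{a_1,\dots,a_k\}$ (using $\cL_1(X) = \cA$), so $\cA = \{a_1,\dots,a_k\}$.

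For the final assertion: once $\cA = \{a_1,\dots,a_k\}$, the hypothesis directly says $E^R_X(a_i) = \{a_{i+1}\}$ (indices mod $k$) for every letter, so no letter is right special. The symmetric statement for left extensions is obtained verbatim with "prefix/follow" replaced by "suffix/precede".

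The main obstacle I anticipate is being careful about the minimality argument: strictly speaking one should note that a nonempty minimal shift space in which some point is eventually periodic to the right must be finite and equal to the orbit of a single periodic point. This is standard (a minimal system containing a periodic point equals the finite orbit of that point), but it is worth spelling out that the forced infinite tail $(a_1\cdots a_k)^\omega$ after an occurrence of $a_1$, combined with the fact that $a_1$ recurs with bounded gaps by minimality, already forces the whole two-sided sequence to be periodic, hence $X$ finite. Everything else is bookkeeping with the uniqueness of extensions.
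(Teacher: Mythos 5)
Your argument is correct. Note that the paper states this lemma without proof, so there is no official argument to compare against; judged on its own, your proof is sound and essentially the canonical one. The chain of unique right extensions forces every occurrence of $a_1$ to be followed by the one-sided periodic tail $(a_2\cdots a_k a_1)^\omega$; since $a_1 \in \cL_1(X) = \cA$, some $x \in X$ has such a tail, and taking a limit of shifts $S^{n+mk}(x)$ as $m \to \infty$ places the bi-infinite periodic point $p = {}^\omega(a_1\cdots a_k)^\omega$ in the closed shift-invariant set $X$. Minimality then forces $X$ to equal the finite orbit of $p$, whence $\cA = \cL_1(X) = \{a_1,\dots,a_k\}$, and the final assertion is immediate since every letter is then some $a_i$ with the singleton right-extension set $\{a_{i+1 \bmod k}\}$. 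Two small remarks: the detour through ``bounded gaps'' is unnecessary once you have $p \in X$ (a minimal system containing a periodic point is that finite orbit); and the statement is really a direct proof rather than a contradiction, since the conclusion $\cA = \{a_1,\dots,a_k\}$ drops out of $\cL(X) = \cL(p)$ without assuming its negation. Neither affects correctness.
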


\begin{prop} 
Any minimal shift space $X$ such that the graph $\cE_X(\varepsilon)$ is connected has a bispecial letter.
\end{prop}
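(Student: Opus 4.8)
The plan is to use the connectedness of $\cE_X(\varepsilon)$ together with Lemma~\ref{L:subset of non left special letters} and a counting argument based on Proposition~\ref{P:complexity}. The key observation is that $\cE_X(\varepsilon)$ is the extension graph of the empty word, whose left vertices are $\cL_1(X) = \cA$ and whose right vertices are also $\cA$, with an edge between $a^L$ and $b^R$ whenever $ab \in \cL_2(X)$. Connectedness of this graph forces there to be "enough" extensions among the letters.

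First I would suppose, for contradiction, that $X$ has no bispecial letter. Then each letter is either non-left-special or non-right-special (or both). By Proposition~\ref{P:complexity}, $p_X(2) - p_X(1) = \sum_{a \in \cA} (\Card(E^R(a)) - 1) = \sum_{a \in \cA}(\Card(E^L(a)) - 1)$. I would split $\cA$ into the set $L$ of letters that are not right-special (so $\Card(E^R(a)) = 1$) and argue via Lemma~\ref{L:subset of non left special letters}: if \emph{every} letter were non-right-special, the "only right extension" relation would be a function $\cA \to \cA$, and iterating it produces a cycle $a_1, a_2, \dots, a_k$ as in the hypothesis of that lemma, forcing $\cA = \{a_1, \dots, a_k\}$; but then $\cL_2(X) = \{a_1a_2, a_2a_3, \dots, a_ka_1\}$ has exactly $d$ elements arranged in a single cycle, and $\cE_X(\varepsilon)$ is then a $2d$-cycle, which is connected but contains a cycle — wait, more to the point, I should instead directly extract the contradiction with connectedness when bispecial letters are absent.

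The cleaner route: assume no bispecial letter, and let $B^L = \{a : \Card(E^L(a)) \geq 2\}$, $B^R = \{a : \Card(E^R(a)) \geq 2\}$; these are disjoint. Every vertex $a^L$ with $a \notin B^L$ has degree $1$ in $\cE_X(\varepsilon)$ (since $\Card(E^L(a))$... careful: the degree of the \emph{left} vertex $a^L$ equals $\Card(E^R(a))$, the number of $b$ with $ab \in \cL_2(X)$). So left vertex $a^L$ has degree $1$ iff $a \notin B^R$, and right vertex $b^R$ has degree $1$ iff $b \notin B^L$. Since $B^L$ and $B^R$ are disjoint and every letter lies outside at least one of them, for each letter at least one of its two copies is a leaf. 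Now $\cE_X(\varepsilon)$ is connected with $2d$ vertices, hence has at least $2d - 1$ edges; but the number of edges is $\Card(\cL_2(X)) = p_X(2)$, and by Proposition~\ref{P:complexity}, $p_X(2) = p_X(1) + \sum_{a}(\Card(E^L(a))-1) = d + \Card(B^L\text{-contribution})$. I would bound: if $b \notin B^L$ then $b$ contributes $0$ to $p_X(2) - d$ via the left sum, so $p_X(2) - d = \sum_{b \in B^L}(\Card(E^L(b)) - 1)$, and similarly $= \sum_{a \in B^R}(\Card(E^R(a))-1)$. A spanning tree argument: contract; actually the sharpest statement is that a connected graph in which \emph{every} vertex has a partner copy that is a leaf cannot have too many edges unless some letter is bispecial. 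I expect the main obstacle to be making this leaf/degree bookkeeping airtight — specifically, ruling out the degenerate configurations where $X$ is a single periodic orbit (handled by Lemma~\ref{L:subset of non left special letters} directly) versus the genuinely aperiodic case. Once the degree count shows connectedness of $\cE_X(\varepsilon)$ on $2d$ vertices requires $\geq 2d-1$ edges while the "every letter has a leaf copy" condition caps the edge count strictly below $2d-1$ (unless $B^L$ and $B^R$ overlap, i.e. a bispecial letter exists), the contradiction closes and the proof is complete.
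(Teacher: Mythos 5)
Your counting strategy has a genuine gap at its punchline. The claim that the ``every letter has a leaf copy'' condition caps the edge count strictly below $2d-1$ is false: a connected bipartite graph on $2d$ vertices can be a tree, with exactly $2d-1$ edges, and a tree can have as many as $2d-1$ leaves. Concretely, take $\cE_X(\varepsilon)$ to be the double star in which $a^L$ is joined to every right vertex and every left vertex is joined to $b^R$ (with $a\neq b$): it is connected, has exactly $2d-1$ edges, every letter has at least one leaf copy, and no letter is bispecial (here $B^R=\{a\}$ and $B^L=\{b\}$ are disjoint). So degree/edge bookkeeping alone cannot produce a contradiction; the obstruction is dynamical, not graph-theoretic. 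In this configuration the letter $b$ has $b$ as its unique right extension, so Lemma~\ref{L:subset of non left special letters} forces $\cA=\{b\}$, contradicting $\Card(\cA)\geq 2$ --- but that is precisely the kind of case analysis your counting was meant to replace, and you would need to rule out \emph{all} such tree-shaped configurations this way, not only the single-periodic-orbit one you flag.

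The paper avoids counting entirely: it picks a right special letter $a_1$ (which exists by connectedness), and if $a_1$ is not left special, its unique left extension $a_2$ labels the only neighbour $a_2^L$ of $a_1^R$; connectedness then forces $a_2^L$ to have degree at least $2$, i.e.\ $a_2$ is right special. Lemma~\ref{L:subset of non left special letters} guarantees the chain $a_1,a_2,\dots$ never revisits a letter (a closed chain of unique left extensions would preclude any left special letter, contradicting connectedness), so finiteness of $\cA$ forces some $a_i$ to be left special, hence bispecial. To salvage your approach you would have to invoke the lemma on every letter whose unique-extension chain closes up, at which point you have essentially reconstructed the paper's argument.
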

\begin{proof}
First, note that, because $\cE_X(\varepsilon)$ is connected, there is always a left and a right special letter.
Let $a_1$ be a right special letter. If it is bispecial, we can conclude. Otherwise, let $a_2$ be its unique left extension. Using Lemma~\ref{L:subset of non left special letters}, we know that $a_2 \ne a_1$.
In the graph $\cE_X(\varepsilon)$, $a_2^L$ is then the only neighbour of the vertex $a_1^R$ but this graph is connected thus the vertex $a_2^L$ is of degree at least two and $a_2$ is a right special letter.
Once again, if it is left special, we have found a bispecial letter and otherwise it has a unique left extension $a_3$. By Lemma~\ref{L:subset of non left special letters}, we must have $a_3 \ne a_1$ and $a_3 \ne a_2$.
We can iterate the process to define $a_4, a_5, \dots$. However, since the alphabet is finite, it stops at some point, meaning that we have found a bispecial letter.
\end{proof}

\subsection{Characterization via two graphs}

Similarly to what was done in the case of a ternary alphabet in~\citep{Gheeraert_Lejeune_Leroy:2021}, using the results of Section~\ref{S:image of graphs}, we can give an $S$-adic characterization of the minimal dendric shifts on an alphabet $\cA$. This characterization uses two graphs corresponding to the left and the right extensions respectively.

The edges of these graphs are given by the left (resp., right) valid triplets, as defined below. Note that we only assume that $\sigma$ is a return morphism and not a dendric return morphism, even if for the $S$-adic characterization (Theorem~\ref{T:main}), we will restrict ourselves to dendric return morphisms. 

\begin{defi}
\label{D:left valid triplet}
Let $\sigma : \cA^* \to \cB^*$ be a return morphism. The triplet $(G', \sigma, G)$ is \emph{left (resp., right) valid} if the following conditions are satisfied
\begin{enumerate}
\item
	$G$ is an acyclic for the coloring and connected multi-clique;
\item
	for all $s \in \cT^L(\sigma)$, $G^L_{\sigma, s}$ is connected (resp., for all $p \in \cT^R(\sigma)$, $G^R_{\sigma, p}$ is connected);
\item
	$G' = \sigma^L(G)$ (resp., $G' = \sigma^R(G)$) is an acyclic for the coloring and connected multi-clique.
\end{enumerate}
\end{defi}

Note that, if $s \in \cT^L(\sigma)$ (resp., $p \in \cT^R(\sigma)$) is of minimal length, then $G^L_{\sigma, s} = G$ (resp., $G^R_{\sigma, p} = G$) thus the second item of Definition~\ref{D:left valid triplet} implies that $G$ is connected.
Remark also that, if $X$ is dendric, then item 1 is satisfied for $(G^L(\sigma \cdot X), \sigma, G^L(X))$ and $(G^R(\sigma \cdot X), \sigma, G^R(X))$. Moreover, if $\sigma$ is a dendric return morphism, then $\sigma \cdot X$ is dendric if and only if item 2 is satisfied for both triplets by Proposition~\ref{P:equiv dendric image of dendric shift with graphs}, and in that case, item 3 is also satisfied by Proposition~\ref{P:image of eventually dendric with graphs}.

\begin{ex}
The morphism $\beta$ of Example~\ref{Ex:image of graph} is a dendric return morphism such that $\cT^L(\beta) = \{\eps, 2\}$ and $\cA^L_{\beta, \eps} = \{0,1,2,3\}$, $\cA^L_{\beta, 2} = \{2, 3\}$. Thus the triplet $(\beta^L(G), \beta, G)$, where $G = G(\{0,1\},\{1,2,3\})$ is the graph of Example~\ref{Ex:image of graph} is left valid since both graphs are acyclic for the coloring and connected multi-cliques and $G$ contains the edge $(2, 3)$
\end{ex}

We can deduce that a shift space is minimal dendric if and only if it has a primitive $S$-adic representation labeling an infinite path in the graphs built with the left (resp., right) valid triplets for dendric return morphisms.
The details are not given here as the idea is the same as in the stronger version that we will prove. Indeed, instead of considering all the valid triplets, we can restrict ourselves to the case where the graphs are \emph{colored trees}, i.e. all the edges have a different color and the corresponding uncolored graph is a tree.

Note that a colored tree is trivially acyclic for the coloring and connected. As we only work with multi-cliques, in what follows, any colored graph whose underlying graph is a tree is a colored tree. Conversely, an uncolored tree corresponds to a unique colored tree so we will make no distinction between the colored and the uncolored version.

For two multi-graphs $G$ and $G'$ with colored edges, we say that $G$ is a subgraph of $G'$ if the uncolored version of $G$ is a subgraph of the uncolored version of $G'$ (but the colors might not coincide).

\begin{lem}\label{L:breaking a component in two}
If the multi-clique $G = G(\{C_1, \dots, C_k\})$ is acyclic for the coloring and connected and if $D, E$ are such that
\[
	D \cup E = C_1 \quad \text{and} \quad \Card(D \cap E) = 1,
\]
then the multi-clique $G' = G(\{D, E, C_2, \dots, C_k\})$ is an acyclic for the coloring and connected subgraph of $G$.
\end{lem}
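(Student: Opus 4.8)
The plan is to show separately that $G'$ is a subgraph of $G$, that it is connected, and that it is acyclic for the coloring. Write $c_1$ for the color of the clique $C_1$ in $G$, and let $\{x\} = D \cap E$ be the common vertex.

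First I would check that $G'$ is a subgraph of $G$ in the sense defined just before the lemma, i.e.\ that the uncolored version of $G'$ embeds in the uncolored version of $G$. The cliques $C_2, \dots, C_k$ contribute the same (uncolored) edges to both graphs, so it suffices to compare the edges coming from $C_1$ in $G$ with those coming from $D$ and $E$ in $G'$. Any edge of $G'$ arising from $D$ (resp.\ $E$) joins two vertices of $D \subseteq C_1$ (resp.\ $E \subseteq C_1$), hence is an edge of the clique on $C_1$ in $G$. (The two new cliques use distinct colors, which is allowed since subgraph is only required up to forgetting colors; note also that a given pair in $D \cap E = \{x\}$ cannot be doubled, as $D \cap E$ has only one element.) So $G'$ is a subgraph of $G$.

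Next, connectedness: I would argue that $G$ and $G'$ have the same connected components, in fact the same vertex reachability relation. The inclusion ``reachable in $G'$ $\Rightarrow$ reachable in $G$'' is immediate from the subgraph property. For the converse, any edge of $G$ that is not an edge of $G'$ must come from the clique on $C_1$ and join a vertex of $D \setminus E$ to a vertex of $E \setminus D$; but such a pair $\{u, v\}$ is connected in $G'$ through $x$, via the path $u \to x$ (an edge of the clique on $D$, since $u, x \in D$) and $x \to v$ (an edge of the clique on $E$, since $x, v \in E$). Hence every edge of $G$ can be simulated by a path in $G'$, so reachability is the same, and since $G$ is connected so is $G'$.

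Finally, acyclicity for the coloring of $G'$: I would proceed by contradiction, taking a simple cycle $\gamma$ in $G'$ using at least two distinct colors, and producing from it a simple cycle in $G$ with at least two distinct colors. Replace each maximal run of consecutive edges of $\gamma$ colored $c_D$ (the color of the new clique $D$) or $c_E$ by the straight edge joining its endpoints; these endpoints both lie in $D$ or both in $E \subseteq C_1$, so the replacing edge is a legitimate edge of $G$, colored $c_1$ (or the run collapses if its endpoints coincide, which cannot create a new short-circuit since the run is inside a simple cycle). The resulting closed walk in $G$ still has at least two colors: if $\gamma$ used some color $c_i$ with $i \geq 2$, that edge survives; and if $\gamma$ used only the colors $c_D$ and $c_E$, then since $\gamma$ is simple and $D \cap E = \{x\}$ it can meet the two cliques only at $x$, forcing $\gamma$ to be a single cycle lying in $D$ or in $E$ — impossible as a simple cycle inside a single clique is a triangle of one color, contradicting the two-color assumption. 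After discarding repeated vertices one extracts a genuine simple cycle in $G$ with two distinct colors, contradicting acyclicity of $G$. Hence $G'$ is acyclic for the coloring.

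The delicate point — and the step I expect to be the main obstacle — is the bookkeeping in the last paragraph: one must be careful that collapsing the $c_D$/$c_E$ runs of a \emph{simple} cycle in $G'$ really yields a cycle in $G$ that is still non-degenerate and still genuinely bichromatic, and that the case where $\gamma$ uses \emph{only} the two new colors is correctly excluded using $\Card(D \cap E) = 1$. Everything else (subgraph, connectedness) is routine once the right edge-simulation observation is in place.
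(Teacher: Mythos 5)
Your proposal is correct and follows essentially the same route as the paper: the subgraph and connectedness arguments are identical, and the acyclicity argument rests on the same two observations (a simple cycle of $G'$ is a simple cycle of $G$, hence monochromatic in $G$; and a cycle inside $C_1$ cannot mix $D$-edges and $E$-edges because only the single vertex of $D \cap E$ could serve as a transition point, while a simple cycle would need at least two). The run-collapsing step is superfluous — since every edge of $G'$ is already an edge of $G$, the cycle can be read off in $G$ directly — but it does not invalidate the argument.
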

\begin{proof}
Let $\{c\} = D \cap E$. By construction, $G'$ is a subgraph of $G$ and any lost edge was between a vertex of $D \setminus \{c\}$ and a vertex of $E \setminus \{c\}$. Any two such vertices remain connected through $c$ thus $G'$ is connected. 
Moreover, any simple cycle of $G'$ corresponds to cycle of $G$ thus only uses edges corresponding to one of the $C_i$, $i \leq k$, in $G$. Since $c$ is the only vertex with both ingoing edges corresponding to $D$ and ingoing edges corresponding to $E$, any simple cycle using edges from $C_1 = D \cup E$ only uses edges corresponding to $D$ or edges corresponding to $E$. The conclusion follows.
\end{proof}

Recall that a covering tree of a graph $G$ is a subgraph $T$ of $G$ with the same vertices as $G$ and which is a tree. If $G$ is a colored graph, then $T$ does not need to share the same edges colors as $G$ and is always assumed to be a colored tree (or an uncolored tree, as explained above).

\begin{prop}\label{P:from graphs to trees}
Let $(G', \sigma, G)$ be a left (resp., right) valid triplet. For any covering tree $T'$ of $G'$, there exists a covering tree $T$ of $G$ such that $(T', \sigma, T)$ is left (resp., right) valid.
\end{prop}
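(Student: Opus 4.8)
The plan is to work with the left case (the right case being symmetric) and to reverse-engineer the covering tree $T$ of $G$ from the covering tree $T'$ of $G'$. Write $G = G(\{C_1, \dots, C_k\})$ and recall that $G' = \sigma^L(G) = G(\{\varphi^L_{\sigma, s}(C_i) \mid i \leq k,\ s \in \cT^L(\sigma)\})$. The key observation is that the edges of $G'$ are partitioned (by color) according to pairs $(i, s)$ with $i \leq k$ and $s \in \cT^L(\sigma)$, the corresponding complete subgraph being on the vertex set $\varphi^L_{\sigma, s}(C_i)$. A covering tree $T'$ of $G'$ selects, for each color that it uses, a spanning forest of that complete subgraph, and these forests together form a tree on $\cB$. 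For each $(i, s)$, the edges of $T'$ coming from $\varphi^L_{\sigma, s}(C_i)$ thus induce, via pulling back along $\varphi^L_{\sigma, s}$, a partition of $C_i$ into blocks: two letters $a, b \in C_i \cap \cA^L_{\sigma, s}$ lie in the same block if their images are in the same connected component of the chosen spanning forest of $\varphi^L_{\sigma, s}(C_i)$, while letters of $C_i$ outside $\cA^L_{\sigma, s}$ form singleton blocks.

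The main step is to iterate Lemma~\ref{L:breaking a component in two}: starting from $G$ and repeatedly splitting the $C_i$'s into the blocks dictated above, we obtain a subgraph $H = G(\{D_{i,1}, D_{i,2}, \dots\}_{i})$ of $G$ that is still acyclic for the coloring and connected (the lemma is applied only when two blocks share exactly one vertex, which one arranges by building the refinement one split at a time, each split separating one component of a spanning forest from the rest through a cut vertex — here one uses that $G$ itself is connected and acyclic, so that inside each complete piece $C_i$ any tree structure can be realized by successive single-vertex splits). Since $H$ is connected with $\#\cB$ vertices, a covering tree $T$ of $H$ is a covering tree of $G$, and it remains to choose $T$ so that $(T', \sigma, T)$ is left valid, i.e. so that $\sigma^L(T) = T'$ (up to recoloring) and so that $T^L_{\sigma, s}$ is connected for all $s \in \cT^L(\sigma)$.

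To get $\sigma^L(T) = T'$: by construction of $H$, for each $(i, s)$ the image $\varphi^L_{\sigma, s}(D_{i,j})$ is exactly (the vertex set of) one component of the chosen spanning forest of $\varphi^L_{\sigma, s}(C_i)$ inside $T'$, hence already spans a subtree of $T'$; so when we take the covering tree $T$ of $H$ and apply $\sigma^L$, the complete graphs on the $\varphi^L_{\sigma, s}(D_{i,j})$ reassemble precisely the edges of $T'$ and nothing more, giving $\sigma^L(T) = T'$. The freedom we still have is in choosing, for each block $D_{i,j}$, a spanning tree of the complete graph on $D_{i,j}$; the final constraint is that for each fixed $s \in \cT^L(\sigma)$, the union over $(i,j)$ of the $T$-edges with both endpoints in $\cA^L_{\sigma, s}$ must connect all of $\cA^L_{\sigma, s}$. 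I expect this to be the main obstacle: one must verify that such a coherent choice exists simultaneously for all $s$. The way to see it is that $G^L_{\sigma, s}$ is connected for every $s$ (item 2 of left validity for $(G', \sigma, G)$), so the blocks $D_{i,j}$ that meet $\cA^L_{\sigma, s}$, viewed with their induced complete subgraphs, still cover and connect $\cA^L_{\sigma, s}$; since $H^L_{\sigma, s}$ is acyclic for the coloring (being a subgraph of $G$), one can pick inside each relevant block a spanning tree whose restriction to $\cA^L_{\sigma, s}$ is a spanning tree of that block's trace on $\cA^L_{\sigma, s}$ — and because the blocks for different indices overlap in at most the cut vertices used in the splitting, these local choices glue into a global spanning tree $T$ of $H$ with $T^L_{\sigma, s}$ connected for all $s$. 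This yields a covering tree $T$ of $G$ with $(T', \sigma, T)$ left valid, completing the proof.
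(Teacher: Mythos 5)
Your overall strategy is the paper's: repeatedly apply Lemma~\ref{L:breaking a component in two} to split the cliques $C_i$ of $G$ as dictated by $T'$, until one is left with a tree. But the two steps you treat as routine are exactly where the work lies, and as written they do not go through. The first gap is that your block decomposition is not well defined and your key claim about it is false. For a single $C_i$ you get one partition for each $s\in\cT^L(\sigma)$, and these can be incompatible; you never say how to combine them, nor which ``cut vertices'' are shared between blocks so that Lemma~\ref{L:breaking a component in two} (which needs $\Card(D\cap E)=1$, not a disjoint partition) applies. This matters: an edge $\{x,y\}$ of $T$ contributes to $\sigma^L(T)$ exactly the edge $\{\varphi^L_{\sigma,s_\sigma(x,y)}(x),\varphi^L_{\sigma,s_\sigma(x,y)}(y)\}$ (for shorter suffixes the two images coincide, for longer or incomparable ones one of them is undefined), so the relevant suffix is the \emph{longest} common suffix of the pair, not the suffix $s$ used to define the block. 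Take $C_i=\{a,b,c\}$ with $s:=s_\sigma(a,b)=s_\sigma(a,c)$ a proper suffix of $s'':=s_\sigma(b,c)$; then $\varphi^L_{\sigma,s}$ identifies $b$ and $c$, so if the color-$(i,s)$ edge of $G'$ lies in $T'$ your partition for $(i,s)$ keeps $\{a,b,c\}$ as one block, yet the color-$(i,s'')$ edge $\{\varphi^L_{\sigma,s''}(b),\varphi^L_{\sigma,s''}(c)\}$ may well not be in $T'$, and choosing the spanning path $a$--$b$--$c$ of that block then puts this forbidden edge into $\sigma^L(T)$. So the images of the blocks do \emph{not} ``reassemble precisely the edges of $T'$ and nothing more'' for an arbitrary per-block spanning tree; the splitting has to be driven by the longer suffixes as well, which is why the paper, when $G'$ is already a tree, splits along the longest common suffix of the $\sigma(d)$, $d\in C_i$, and peels off one preimage class at a time through a carefully chosen shared vertex $c$.

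The second gap is that after each split you must re-establish \emph{all} of left validity for the new triplet: that $H^L_{\sigma,s'}$ stays connected for every $s'\in\cT^L(\sigma)$ (not only the $s$ guiding the split), and that $\sigma^L(H)$ is still an acyclic-for-the-coloring, connected multi-clique containing $T'$. Your justification (``the blocks for different indices overlap in at most the cut vertices\dots these local choices glue'') is an assertion, not an argument; how the split $\{D,E\}$ interacts with a given $s'$ depends on how $s'$ compares as a suffix with $s$ and with $as$, and the paper needs a four-case computation of $\varphi^L_{\sigma,s'}(D)$ and $\varphi^L_{\sigma,s'}(E)$ to verify both conditions and to see that $\sigma^L(H)$ is again obtained from $\sigma^L(G)$ by a legal application of Lemma~\ref{L:breaking a component in two}. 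Without that analysis, and without a termination measure (the paper uses the total number of edges of $H$ and $H'$), your construction is a plausible plan rather than a proof.
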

\begin{proof}
We prove the result for left valid triplets.
Let $\mathcal{C}= \{C_1, \dots, C_k\}$ be such that $G = G(\mathcal{C})$.
Because $(G', \sigma, G)$ is left valid, one has $G' = \sigma^L(G) = G(\cC')$, where $\cC' = \{\varphi^L_{\sigma,s}(C_i) \mid i \leq k, s \in \cT^L(\sigma)\}$.
 
Assume that either $G$ or $G'$ is not a tree.
Observe that since $(G', \sigma, G)$ is a left valid triplet, $G$ and $G'$ are connected multi-cliques.
Thus if one of them is not a tree, it must contain a cycle. 
Whichever of $G$ and $G'$ is not a tree, we start by highlighting a word $s \in \mathcal{T}^L(\sigma)$, some set $C_i \in \mathcal{C}$ and two distinct vertices $a,b \in \varphi^L_{\sigma,s}(C_i)$ as follows.

\begin{itemize}
\item
	If $G'$ is not a tree, it contains a cycle. 
	Since $G' = G(\cC')$ is acyclic for the coloring, there exist $i \leq k$ and $s \in \cT^L(\sigma)$ such that $C = \varphi_{\sigma, s}^L(C_i)$ contains at least 3 elements. 
	The subgraph $T''$ of $T'$ generated by the vertices of $C$ is acyclic. Moreover, for any two vertices in $C$, any path connecting them in $G'$ uses edges corresponding to $C$ as $G'$ is acyclic for the coloring. Thus the path connecting them in $T'$ is in $T''$ and $T''$ is connected. 
	We choose $a \in C$ to be a vertex of degree 1 in $T''$ and let $b$ be its neighbor in $T''$.
	
\item
	If $G'$ is a tree but $G$ is not, then similarly to the first case, there exists $i \leq k$ such that $C_i$ contains at least 3 elements. 
	Let $s$ be the longest common suffix to all the $\sigma(d)$, $d \in C_i$; we have $s \in \cT^L(\sigma)$.
	The set $C = \varphi_{\sigma, s}^L(C_i)$ contains exactly two elements. 
	Indeed, it has at least two elements by definition of $s$ and, $G'$ being a tree, it cannot contain more than 2.
	Let us write $C = \{a,b \}$ with $b$ such that
	\[
		\Card(\{d \in C_i \mid \varphi_{\sigma, s}^L(d) = b\}) \geq 2.
	\]
\end{itemize}

In both cases, let $c \in C_i$ be such that $\varphi_{\sigma, s}^L(c) = b$ and let us note
\[
	D = \{c\} \cup \{d \in C_i \mid \varphi_{\sigma, s}^L(d) = a\}
\]
and
\[
	E = (C_i \setminus D) \cup \{c\}.
\]

We now use Lemma~\ref{L:breaking a component in two} to obtain a new left valid triplet $(H',\sigma,H)$, where 
\begin{itemize}
\item
$H$ is a connected subgraph of $G$;
\item
$T'$ is still a covering tree of $H'$;
\item
the total number of edges in $H$ and $H'$ is less than in $G$ and $G'$.
\end{itemize}
This will end the proof by iterating the construction.

If $\cD = \{D, E, C_1, \dots, C_k\} \setminus \{C_i\}$, then by Lemma~\ref{L:breaking a component in two}, the multi-clique $H = G(\cD)$ is an acyclic for the coloring and connected subgraph of $G$.

Consider now $\cD' = \{\varphi^L_{\sigma,s'}(F) \mid F \in \cD, s' \in \cT^L(\sigma)\}$ and $H' = G(\cD')$.
We show that the triplet $(H',\sigma,H)$ is left valid, i.e., we show item 2 and 3 of Definition~\ref{D:left valid triplet}. 
For any $s' \in \cT^L(\sigma)$, we now show that $H^L_{\sigma, s'}$ is connected. Moreover, we describe the sets $\varphi^L_{\sigma, s'}(D)$ and $\varphi^L_{\sigma, s'}(E)$.
Observe that for all $s'$, $H^L_{\sigma, s'}$ is a subgraph of $G^L_{\sigma, s'}$ obtained by removing the edges between $D\setminus \{c\}$ and $E\setminus\{c\}$.
\begin{itemize}
\item
If $s' = s$, then $c \in \cA^L_{\sigma, s'}$. 
The graph $H^L_{\sigma, s'}$ is connected as any pair of vertices is connected in $G^L_{\sigma, s'}$ and a path in $H^L_{\sigma, s'}$ can be deduced from the one in $G^L_{\sigma, s'}$ by replacing an edge $(d,e) \in (D\setminus\{c\}) \times (E \setminus \{c\})$ by the path $d,c,e$.
By definition of $D$ and $E$,
	\[
		\varphi^L_{\sigma, s'}(D) = \{a, b\} \quad \text{and} \quad \varphi^L_{\sigma, s'}(E) = \varphi^L_{\sigma, s'}(C_i) \setminus \{a\} = C \setminus \{a\}.
	\]

\item
If $s \in \cA^*ds'$ for some letter $d$, then $H^L_{\sigma, s'}$ is connected for the same reason and
	\[
		\varphi^L_{\sigma, s'}(D) = \{d\} \quad \text{and} \quad \varphi^L_{\sigma, s'}(E) = \varphi^L_{\sigma, s'}(C_i).
	\]

\item
If $s' \in \cA^*as$, then $\cA^L_{\sigma, s'} \cap E = \emptyset$ and $\cA^L_{\sigma, s'} \cap C_i = \cA^L_{\sigma, s'} \cap D$ thus no edge was lost from $G^L_{\sigma, s'}$ to $H^L_{\sigma, s'}$. In addition, 
	\[
		\varphi^L_{\sigma, s'}(D) = \varphi^L_{\sigma, s'}(C_i) \quad \text{and} \quad \varphi^L_{\sigma, s'}(E) = \emptyset.
	\]

\item
Otherwise, $as$ and $s'$ are not suffix comparable thus $\cA^L_{\sigma, s'} \cap C_i = \cA^L_{\sigma, s'} \cap E$, $\cA^L_{\sigma, s'} \cap D \subseteq \{c\}$ and no edge was lost from $G^L_{\sigma, s'}$ to $H^L_{\sigma, s'}$. We have 
	\[
		\varphi^L_{\sigma, s'}(D) = \varphi^L_{\sigma, s'}(\{c\}) 
		\quad \text{and} \quad 
		\varphi^L_{\sigma, s'}(E) = \varphi^L_{\sigma, s'}(C_i).
	\]
\end{itemize}
In particular, we have shown that
\[
    G(\cD') = G\left(\left(\cC' \setminus \{C\}\right) \cup \{\{a, b\}, C \setminus \{a\}\}\right)
\]
thus, using Lemma~\ref{L:breaking a component in two}, the graph
\[
    H' = G(\cD') = \sigma^L(H)
\]
is acyclic for the coloring and connected. This proves that the triplet $(H', \sigma, H)$ is left valid.

We finally show that $T'$ is still a subtree of $H'$ and that the total number of edges decreased.
By what have just seen, $H'$ is a subgraph of $G'$ obtained by removing the edges corresponding to $C$ between $a$ and the elements of $C \setminus \{a, b\}$. 
If $G'$ is not a tree, then $H'$ is a strict subgraph of $G'$ since $C$ contains at least three elements, and $a$ and $b$ where chosen so that $T'$ is a subtree of $H'$. 
If $G'$ is a tree, then $H' = G'$ and $H$ is a connected strict subgraph of $G$ as $D$ and $E$ both contain at least two elements. 
Thus we can iterate the process until both graphs are trees.
\end{proof}

We now define formally the graphs that we will use for the $S$-adic characterization.
\begin{defi}
Let $\cS$ be a set of return morphisms from $\cA^*$ to $\cA^*$. The graph $\cG^L(\cS)$ (resp., $\cG^R(\cS)$) is defined by
\begin{itemize}
\item
	each vertex corresponds to a tree whose vertices are the elements of $\cA$;
\item
	there is an edge from $T'$ to $T$ labeled by $\sigma \in \cS$ if $(T', \sigma, T)$ is a left (resp., right) valid triplet.
\end{itemize}
\end{defi}

The proof of the main result of this article uses the following lemma.

\begin{lem}\label{L:graph containing valid triplet}
Let $X$ be a dendric shift, $\sigma$ a dendric return morphism and $Y = \sigma \cdot X$. If there exist a left valid triplet $(G', \sigma, G)$ and a right valid triplet $(H',\sigma, H)$ such that $G$ is a covering subgraph of $G^L(X)$ and $H$ is a covering subgraph of $G^R(X)$, then $Y$ is dendric, $G'$ is a covering subgraph of $G^L(Y)$ and $H'$ is a covering subgraph of $G^R(Y)$.
\end{lem}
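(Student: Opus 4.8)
The plan is to leverage the already established characterizations so that we never argue directly about extension graphs of words, only about the graphs $G^L$, $G^R$ and their images. First I would observe that $G^L(X)$ is, by definition, the colored multi-clique $G(\{E^L_X(v) \mid v \in \cL_N(X)\})$ for $N$ large, and similarly for $G^R(X)$; since $G$ is a covering subgraph of $G^L(X)$ that is a connected multi-clique (item~1 of the left valid triplet), and since item~2 says $G^L_{\sigma,s}$ is connected for every $s \in \cT^L(\sigma)$, the monotonicity of connectedness under adding edges gives that $(G^L(X))^L_{\sigma,s} = G^L_{\sigma,s}(X)$ is also connected for every $s \in \cT^L(\sigma)$. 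Symmetrically, item~2 of the right valid triplet $(H',\sigma,H)$ yields that $G^R_{\sigma,p}(X)$ is connected for every $p \in \cT^R(\sigma)$.

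Next I would invoke Proposition~\ref{P:equiv dendric image of dendric shift with graphs}: since $X$ is dendric, $\sigma$ is a dendric return morphism, and we have just shown $G^L_{\sigma,s}(X)$ and $G^R_{\sigma,p}(X)$ are connected for all relevant $s$ and $p$, the image $Y = \sigma \cdot X$ is dendric. In particular $Y$ is eventually dendric, so $G^L(Y)$ and $G^R(Y)$ are well-defined, and by Proposition~\ref{P:image of eventually dendric with graphs} we have $G^L(Y) = \sigma^L(G^L(X))$ and $G^R(Y) = \sigma^R(G^R(X))$.

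It then remains to show that $G' = \sigma^L(G)$ is a covering subgraph of $\sigma^L(G^L(X))$, and likewise $H' = \sigma^R(H)$ is a covering subgraph of $\sigma^R(G^R(X))$. Here I would argue directly from the definition of the left image: if $G = G(\{C_1,\dots,C_k\})$ and $G^L(X) = G(\{D_1,\dots,D_m\})$ with $\{C_1,\dots,C_k\} \subseteq \{D_1,\dots,D_m\}$ as a consequence of $G$ being a covering subgraph (more precisely, every $C_i$ is contained in some $D_j$, and since these are the \emph{maximal} cliques of a multi-clique this forces the cliques of $G$ to refine those of $G^L(X)$), then $\sigma^L(G) = G(\{\varphi^L_{\sigma,s}(C_i)\})$ has each generating set $\varphi^L_{\sigma,s}(C_i)$ contained in the corresponding $\varphi^L_{\sigma,s}(D_j)$, so every edge of $\sigma^L(G)$ is an edge of $\sigma^L(G^L(X))$. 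Since $G$ and $G^L(X)$ have the same vertex set $\cA$, so do $\sigma^L(G)$ and $\sigma^L(G^L(X))$ (both equal $\cB = \cA$ here), hence $G'$ is a covering subgraph of $G^L(Y)$; the right-hand statement is symmetric.

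\textbf{Main obstacle.} The delicate point is the bookkeeping in the last paragraph: "covering subgraph" was defined only at the level of uncolored graphs (colors need not match), so one must be careful that the clique-refinement relation between $G$ and $G^L(X)$ transports correctly through $\varphi^L_{\sigma,s}$, and that no spurious identification of colors causes an edge of $\sigma^L(G)$ to fail to appear in $\sigma^L(G^L(X))$. Concretely, the issue is that $\varphi^L_{\sigma,s}$ is only a partial map and the image $\varphi^L_{\sigma,s}(C_i)$ may be a proper subset of the vertices of $G^L_{\sigma,s}$; but since each $C_i \subseteq D_{j(i)}$ and $\varphi^L_{\sigma,s}$ is a single well-defined partial function (not depending on which clique we restrict to), the inclusion $\varphi^L_{\sigma,s}(C_i) \subseteq \varphi^L_{\sigma,s}(D_{j(i)})$ holds setwise, and that is exactly what is needed for the edge inclusion. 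Once this is phrased carefully the proof is short; the real content has all been pushed into Propositions~\ref{P:equiv dendric image of dendric shift with graphs} and~\ref{P:image of eventually dendric with graphs}.
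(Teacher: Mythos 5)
Your first two steps coincide with the paper's own proof: monotonicity of connectedness under adding edges transfers item~2 of the valid triplets from $G^L_{\sigma,s}$ and $H^R_{\sigma,p}$ to $G^L_{\sigma,s}(X)$ and $G^R_{\sigma,p}(X)$, then Proposition~\ref{P:equiv dendric image of dendric shift with graphs} gives dendricity of $Y$ and Proposition~\ref{P:image of eventually dendric with graphs} gives $G^L(Y)=\sigma^L(G^L(X))$. The divergence is in the final step, and there your justification has a genuine gap. You claim that every color class $C_i$ of $G$ is contained in some color class $D_j$ of $G^L(X)$ ``since these are the maximal cliques of a multi-clique''. But the color classes of a multi-clique are not its maximal cliques, and the refinement does not follow from the covering-subgraph relation alone (which is purely uncolored): $G(\{\{a,b,c\}\})$ is a covering subgraph of $G(\{\{a,b\},\{b,c\},\{a,c\}\})$ --- both have the triangle as underlying graph --- yet $\{a,b,c\}$ lies in no single color class of the latter. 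So the sentence you lean on is false in general, and the ``main obstacle'' you flag (partiality of $\varphi^L_{\sigma,s}$) is not where the difficulty actually sits.

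The refinement claim does hold in the present situation, but for a reason you never invoke: since $X$ is dendric, $G^L(X)$ is acyclic for the coloring (Corollary~\ref{C:equiv dendric graphs}), and any triangle of $G$ viewed inside $G^L(X)$ is a simple cycle, so its three edges must carry a common color; iterating, each $C_i$ with $\Card(C_i)\geq 3$ lands in a single $D_j$ (the case $\Card(C_i)=2$ is immediate). Supplying that argument repairs your proof. The paper avoids the issue entirely by arguing edge by edge: an edge $(a,b)$ of $G'$ arises from $c,d\in C_i$ with $\varphi^L_{\sigma,s}(c)=a$ and $\varphi^L_{\sigma,s}(d)=b$; then $(c,d)$ is an edge of $G$, hence of $G^L(X)$, hence $c,d\in C'_j$ for some color class $C'_j$ of $G^L(X)$, so $a,b\in\varphi^L_{\sigma,s}(C'_j)$ and $(a,b)$ is an edge of $\sigma^L(G^L(X))=G^L(Y)$. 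That version uses only the covering-subgraph hypothesis and no acyclicity, and is the route I would recommend.
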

\begin{proof}
For all $s \in \cT^L(\sigma)$, the graph $G^L_{\sigma, s}$ is a covering subgraph of $G^L_{\sigma, s}(X)$. Since $(G', \sigma, G)$ is left valid, $G^L_{\sigma, s}$ is connected and so is $G^L_{\sigma, s}(X)$. The same reasoning proves that $G^R_{\sigma, p}(X)$ is connected for all $p \in \cT^R(\sigma)$. By Proposition~\ref{P:equiv dendric image of dendric shift with graphs}, $Y$ is dendric.

Let us prove that $G'$ is a covering subgraph of $G^L(Y)$. The proof for $H'$ and $G^R(Y)$ is similar.
First, note that $G^L(Y)$ and $G' = \sigma^L(G)$ have the same set of vertices which is the image alphabet of $\sigma$. Moreover, they are acyclic for the coloring and multi-clique thus they are simple. Hence, it suffices to prove that if there is an edge in $G'$, it is also an edge in $G^L(Y)$.

Let $(a, b)$ be an edge of $G'$ and let $G = G(\{C_1, \dots, C_k\})$ and $G^L(X) = G(\{C'_1, \dots, C'_l\})$.
By definition, $G' = \sigma^L(G)$ thus there exist $i \leq k$ and $s \in \cT^L(\sigma)$ such that $a, b \in \varphi^L_{\sigma, s}(C_i)$. In other words, there exist $c, d \in C_i$ such that $\varphi^L_{\sigma, s}(c) = a$ and $\varphi^L_{\sigma, s}(d) = b$. Since $(c, d)$ is an edge of $G$, it is an edge of $G^L(X)$ and there exists $j \leq l$ such that $c, d \in C'_j$. We then have $a, b \in \varphi^L_{\sigma, s}(C'_j)$ thus there is an edge between $a$ and $b$ in $\sigma^L(G^L(X))$, which is exactly $G^L(Y)$ by Proposition~\ref{P:image of eventually dendric with graphs}. We conclude that $G'$ is a covering subgraph of $G^L(Y)$.
\end{proof}

We can now prove the main result, that we recall here.

\mainThm*

\begin{proof}
Assume that $X$ is minimal dendric. The morphisms of $\cS$ being return morphisms, the sequence $\bsigma$ is as in Theorem~\ref{T:S-adic representation of minimal} and thus is primitive. 
Let us show that $\bsigma$ labels an infinite path in $\cG^L(\cS)$. Similarly, it will label an infinite path in $\cG^R(\cS)$.
For all $n \geq 0$, as $X_\bsigma^{(n+1)}$ and $X_\bsigma^{(n)}$ are dendric and since $X_\bsigma^{(n)}$ is the image of $X_\bsigma^{(n+1)}$ under $\sigma_n$, the triplet $(G^L(X_\bsigma^{(n)}), \sigma_n, G^L(X_\bsigma^{(n+1)}))$ is left valid by Proposition~\ref{P:equiv dendric image of dendric shift with graphs} and Proposition~\ref{P:image of eventually dendric with graphs}.
Let $T_0$ be a covering tree of $G^L(X) = G^L(X_\sigma^{(0)})$. By Proposition~\ref{P:from graphs to trees}, there exists a covering tree $T_1$ of $G^L(X_\sigma^{(1)})$ such that the triplet $(T_0, \sigma_0, T_1)$ is left valid thus $\sigma_0$ labels an edge from $T_0$ to $T_1$ in $\cG^L(\cS)$. We iterate the process to obtain an infinite path in $\cG^L(\cS)$.

Now assume that $\bsigma$ is primitive and labels a path $\left(G^L_n\right)_{n \geq 0}$ in $\cG^L(\cS)$ and a path $\left(G^R_n\right)_{n \geq 0}$ in $\cG^R(\cS)$. The shift $X_\bsigma$ is minimal by primitiveness of $\bsigma$~\citep{Durand:2000}.
Let $u \in \cL(X_\bsigma)$ be a factor. If $u$ is a $\sigma_0$-initial factor, then it is dendric since $\sigma_0$ is dendric. Otherwise, iterating Proposition~\ref{P:definition of antecedent}, there exist a unique $k > 0$ and a unique $v \in \cL(X_\bsigma^{(k)})$ $\sigma_k$-initial such that $u$ is an extended image of $v$ by $\sigma_0 \dots \sigma_{k-1}$.
Let $Y$ be an Arnoux-Rauzy shift and let $Z = \sigma_k \cdot Y$. Since $v$ is $\sigma_k$-initial, $\cE_Z(v) = \cE_{X_\bsigma^{(k)}}(v)$.
As $Y$ is an Arnoux-Rauzy shift, the graph $G^L(Y)$ (resp., $G^R(Y)$) is the complete (simple and monochromatic) graph on $\cA$, thus $G^L_{k+1}$ (resp., $G^R_{k+1}$) is a covering subgraph. By Lemma~\ref{L:graph containing valid triplet} applied $k+1$ times, we obtain that $\sigma_0 \dots \sigma_{k-1} \cdot Z$ is dendric thus, starting with the extension graph $\cE_Z(v)$, $v$ only has dendric extended images under $\sigma_0 \dots \sigma_{k-1}$ and $u$ is dendric.
\end{proof}

By construction, on an alphabet $\cA$ of size $d$, the graphs $\cG^L(\cS)$ and $\cG^R(\cS)$ have $d^{d - 2}$ vertices (see sequence \href{https://oeis.org/A000272}{A000272 on OEIS}) each, meaning that if we take the product of these graphs, we obtain a characterization with a unique graph $\cG(\cS)$ with $d^{2(d-2)}$ vertices.

However, it is possible to reduce the number of vertices with a clever use of permutations. Indeed, the vertices of $\cG(\cS)$ are pairs of labeled trees and some of these pairs are equivalent, in the sense that they are equal up to a permutation of the letters. Thus, by keeping only one pair per equivalence class and adding permutations to the morphisms of the set $\cS$, we obtain a smaller graph.

In the case of a ternary alphabet, this allows to replace a graph with 9 vertices by a graph with 2 vertices, as done in~\citep{Gheeraert_Lejeune_Leroy:2021}.
For an alphabet of size 4, we obtain a graph with 14 vertices instead of 256.

\subsection{An example: $\cS$-adic characterization of minimal dendric shifts with one right special factor of each length}

While we do not have an easy description of an explicit set $\cS$ for which any minimal dendric shift (over some fixed alphabet) has an $\cS$-adic representation, restricting ourselves to subfamilies of minimal dendric shifts allows to build examples.
Furthermore, and in accordance to what was stated in the beginning of the previous subsection, if we consider all possible multi-cliques as the vertices of $\cG^L(\cS)$ and $\cG^R(\cS)$ (and not just the covering trees), any primitive $\cS$-adic representation $\bsigma$ of $X$ will label, among others, the paths $(G^L(X_\bsigma^{(n)}))_{n \geq 0}$ and $(G^R(X_\sigma^{(n)}))_{n \geq 0}$ (in $\cG^L(\cS)$ and $\cG^R(\cS)$ respectively).

We now use this observation to build a graph characterizing the family $F$ of minimal dendric shift spaces over $\{0,1,2,3\}$ having exactly one right special factor of each length.

Let $X$ be such a shift space. If $\bsigma$ is an $S$-adic representation of $X$ (with dendric return morphisms) then, for all $n$, the shift space $X_\bsigma^{(n)}$ also has a unique right special factor of each length. Indeed, $G^R(X)$ is the complete monochromatic graph $K_4$ thus, by Proposition~\ref{P:image of eventually dendric with graphs}, $G^R(X_\bsigma^{(1)})$ contains a complete subgraph of size four and, as $X_\bsigma^{(1)}$ is dendric, we have $G^R(X_\bsigma^{(1)}) = K_4$. Hence, $X_\bsigma^{(1)}$ can only have one right special factor of each length. We iterate for $X_\bsigma^{(n)}$.

As a consequence, it makes sense to look for an $S$-adic characterization of $F$ using infinite paths.
It also implies that the elements of $F$ have $\cS_{1\text{R}}$-adic representations where $\cS_{1\text{R}}$ is the set of dendric return morphisms for a letter which has four right extensions. These morphisms can easily be found using the possible extension graphs for the empty word.

\begin{ex}
Assume that the extension graph of the empty word is given by
\begin{center}
\begin{tikzpicture}
\node (0L) at (0,0) {$0$};
\node (1L) [below of = 0L, node distance = .6cm] {$1$};
\node (2L) [below of = 1L, node distance = .6cm] {$2$};
\node (3L) [below of = 2L, node distance = .6cm] {$3$};

\node (0R) [right of = 0L, node distance = 1.5cm] {$0$};
\node (1R) [below of = 0R, node distance = .6cm] {$1$};
\node (2R) [below of = 1R, node distance = .6cm] {$2$};
\node (3R) [below of = 2R, node distance = .6cm] {$3$};

\draw (0L) -- (0R);
\draw (0L) -- (1R);
\draw (0L) -- (2R);
\draw (0L) -- (3R);

\draw (1L) -- (0R);
\draw (2L) -- (0R);
\draw (3L) -- (2R);
\end{tikzpicture}
\end{center}
The associated Rauzy graph of order 1, i.e., the graph where there is an edge from a letter $a$ to a letter $b$ whenever $ab \in \cL(X)$, is the following
\begin{center}
\begin{tikzpicture}
\node (0) at (0,0) {$0$};
\node (1) [right of = 0, node distance = 1cm] {$1$};
\node (2) [below of = 1, node distance = 1cm] {$2$};
\node (3) [below of = 0, node distance = 1cm] {$3$};

\draw [->] (0) edge[loop above] (0);
\draw [<->] (0) edge (1);
\draw [<->] (0) edge (2);
\draw [->] (0) edge (3);
\draw [->] (3) edge (2);
\end{tikzpicture}
\end{center}
and the return words for $0$, corresponding to paths from and to the vertex $0$, are given by $0$, $01$, $02$ and $032$. Thus, the morphisms of $\cS$ associated with this extension graph are the elements of $\beta \Sigma_4$ where $\beta$ is the morphism of Example~\ref{Ex:image of graph}, i.e. $\beta : 0 \mapsto 0, 1 \mapsto 01, 2 \mapsto 02, 3 \mapsto 032$, and $\Sigma_4$ is the set of permutations over $\{0,1,2,3\}$.
\end{ex}

Up to a permutation of the alphabet, the possible extension graphs (with a unique right special letter) and their associated morphisms are given in Figure~\ref{F:morphisms with one right special}.
Note that, as we take the return morphisms for the only right special letter, there is, up to a permutation, only one morphism corresponding to each extension graph. The set $\cS_{1\text{R}}$ is then finite and given by $\Sigma_4 \{\alpha, \beta, \gamma, \delta\} \Sigma_4$, where $\alpha, \beta, \gamma, \delta$ are defined in Figure~\ref{F:morphisms with one right special}.

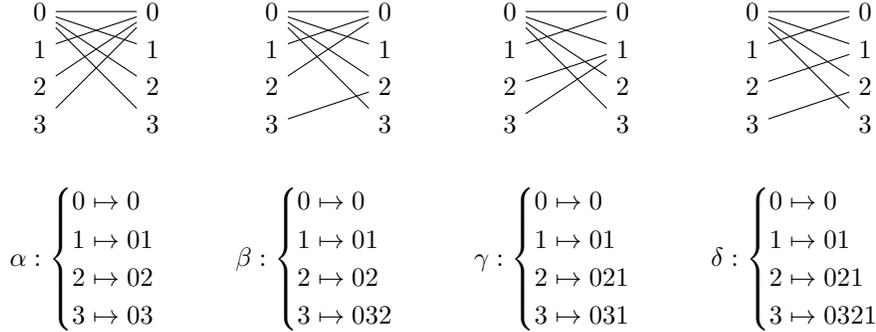
\begin{figure}[h]
\centering
\begin{tabular}{cccc}
\begin{tikzpicture}
\node (0L) at (0,0) {$0$};
\node (1L) [below of = 0L, node distance = .5cm] {$1$};
\node (2L) [below of = 1L, node distance = .5cm] {$2$};
\node (3L) [below of = 2L, node distance = .5cm] {$3$};

\node (0R) [right of = 0L, node distance = 1.5cm] {$0$};
\node (1R) [below of = 0R, node distance = .5cm] {$1$};
\node (2R) [below of = 1R, node distance = .5cm] {$2$};
\node (3R) [below of = 2R, node distance = .5cm] {$3$};

\draw (0L) -- (0R);
\draw (0L) -- (1R);
\draw (0L) -- (2R);
\draw (0L) -- (3R);
\draw (1L) -- (0R);
\draw (2L) -- (0R);
\draw (3L) -- (0R);
\end{tikzpicture}
&
\begin{tikzpicture}
\node (0L) at (0,0) {$0$};
\node (1L) [below of = 0L, node distance = .5cm] {$1$};
\node (2L) [below of = 1L, node distance = .5cm] {$2$};
\node (3L) [below of = 2L, node distance = .5cm] {$3$};

\node (0R) [right of = 0L, node distance = 1.5cm] {$0$};
\node (1R) [below of = 0R, node distance = .5cm] {$1$};
\node (2R) [below of = 1R, node distance = .5cm] {$2$};
\node (3R) [below of = 2R, node distance = .5cm] {$3$};

\draw (0L) -- (0R);
\draw (0L) -- (1R);
\draw (0L) -- (2R);
\draw (0L) -- (3R);
\draw (1L) -- (0R);
\draw (2L) -- (0R);
\draw (3L) -- (2R);
\end{tikzpicture}
&
\begin{tikzpicture}
\node (0L) at (0,0) {$0$};
\node (1L) [below of = 0L, node distance = .5cm] {$1$};
\node (2L) [below of = 1L, node distance = .5cm] {$2$};
\node (3L) [below of = 2L, node distance = .5cm] {$3$};

\node (0R) [right of = 0L, node distance = 1.5cm] {$0$};
\node (1R) [below of = 0R, node distance = .5cm] {$1$};
\node (2R) [below of = 1R, node distance = .5cm] {$2$};
\node (3R) [below of = 2R, node distance = .5cm] {$3$};

\draw (0L) -- (0R);
\draw (0L) -- (1R);
\draw (0L) -- (2R);
\draw (0L) -- (3R);
\draw (1L) -- (0R);
\draw (2L) -- (1R);
\draw (3L) -- (1R);
\end{tikzpicture}
&
\begin{tikzpicture}
\node (0L) at (0,0) {$0$};
\node (1L) [below of = 0L, node distance = .5cm] {$1$};
\node (2L) [below of = 1L, node distance = .5cm] {$2$};
\node (3L) [below of = 2L, node distance = .5cm] {$3$};

\node (0R) [right of = 0L, node distance = 1.5cm] {$0$};
\node (1R) [below of = 0R, node distance = .5cm] {$1$};
\node (2R) [below of = 1R, node distance = .5cm] {$2$};
\node (3R) [below of = 2R, node distance = .5cm] {$3$};

\draw (0L) -- (0R);
\draw (0L) -- (1R);
\draw (0L) -- (2R);
\draw (0L) -- (3R);
\draw (1L) -- (0R);
\draw (2L) -- (1R);
\draw (3L) -- (2R);

\end{tikzpicture}
\\ \\
\begin{tabular}[t]{l}
$\alpha:
\begin{cases}
    0 \mapsto 0     \\
	1 \mapsto 01 	\\ 
	2 \mapsto 02	\\
	3 \mapsto 03
\end{cases}
$
\end{tabular}
&
\begin{tabular}[t]{l}
$\beta:
\begin{cases}
    0 \mapsto 0     \\
	1 \mapsto 01 	\\ 
	2 \mapsto 02	\\
	3 \mapsto 032
\end{cases}
$
\end{tabular}
&
\begin{tabular}[t]{l}
$\gamma:
\begin{cases}
    0 \mapsto 0     \\
	1 \mapsto 01 	\\ 
	2 \mapsto 021	\\
	3 \mapsto 031
\end{cases}
$
\end{tabular}
&
\begin{tabular}[t]{l}
$\delta:
\begin{cases}
    0 \mapsto 0     \\
	1 \mapsto 01 	\\ 
	2 \mapsto 021	\\
	3 \mapsto 0321
\end{cases}
$
\end{tabular}
\end{tabular}
\caption{Possible extension graphs of the empty word in a minimal dendric shift over $\{0,1,2,3\}$ and their associated return morphisms.}
\label{F:morphisms with one right special}
\end{figure}

We now build the graph that will give us the characterization.
For the right side, we consider that the graph $\cG^R(\cS_{1\text{R}})$ is built with all the right valid triplets. However, as we have observed before, any $\cS_{1\text{R}}$-adic representation $\bsigma$ of $X \in F$ labels the path $(G^R(X_\bsigma^{(n)}))_{n \geq 0} = (K_4)_{n \geq 0}$. We can thus assume that the right side graph is reduced to the vertex $K_4$ without losing $\cS_{1\text{R}}$-adic representations.

For the left side, we consider the usual graph $\cG^L(\cS_{1\text{R}})$ (with trees as vertices). When taking the product with the right side graph, we can use permutations and, since $K_4$ is symmetric, this gives us a graph with only two vertices $(G_1, K_4)$ and $(G_2, K_4)$ where $G_1$ and $G_2$ are represented in Figure~\ref{F:graphs with fixed permutations}.

\begin{figure}
\centering
\begin{tikzpicture}
\node (0) at (0,0) {$0$};
\node (1) [above of = 0, node distance = 1cm] {$1$};
\node (2) [below left of = 0, node distance = 1cm] {$2$};
\node (3) [below right of = 0, node distance = 1cm] {$3$};

\draw [-] (0) edge (1);
\draw [-] (0) edge (2);
\draw [-] (0) edge (3);

\node (02) [right of = 0, node distance = 3cm] {$1$};
\node (12) [right of = 02, node distance = 1cm] {$0$};
\node (22) [right of = 12, node distance = 1cm] {$2$};
\node (32) [right of = 22, node distance = 1cm] {$3$};

\draw [-] (02) edge (12);
\draw [-] (12) edge (22);
\draw [-] (22) edge (32);
\end{tikzpicture}
\caption{Graphs $G_1$ on the left and $G_2$ on the right}
\label{F:graphs with fixed permutations}
\end{figure}
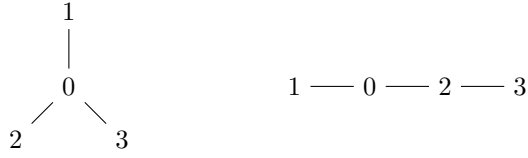

In the end, we obtain that each minimal shift space in $F$ has a primitive $\cS_{1\text{R}}$-adic representation in the graph represented in Figure~\ref{F:characterization of unique right special}. 
Note that, to reduce the number of morphisms, we added a loop on $(G_1, K_4)$ (resp., $(G_2, K_4)$) labeled with the permutations that fix the graph $G_1$ (resp., $G_2$). We will then consider that $\cS_{1\text{R}}$ also contains the permutations. To improve the readability of the graph, we also introduce the following notations: $\Sigma_{\{1,2,3\}}$ is the subset of permutations of $\Sigma_4$ fixing $0$, $\pi_{ab}$ stands for the cyclic permutation $(a\ \ b)$ and we define the following morphisms
\[
    \beta' = \beta \pi_{23}, \quad \delta' = \delta \pi_{23}, \quad \gamma' = \gamma \pi_{12}, \quad \gamma'' = \pi_{23} \gamma \pi_{23}.
\]

\begin{figure}[h]
\begin{center}
\scalebox{.9}{
\begin{tikzpicture}[scale=.8]
\node (1) at (0,0) {$(G_1, K_4)$};
\node (2) at (4,0) {$(G_2, K_4)$};
\node (fake1) at (-1,2) {};
\node (fake2) at (5,2) {};

\path (fake1) edge [->] node [pos=0.5,left] {$\Sigma_4$} (1);
\path (fake2) edge [->] node [pos=0.5,right] {$\Sigma_4$} (2);

\path (1) edge [loop left, ->] node [pos=0.5,left,align=center] {$\Sigma_{\{1,2,3\}}$, \\
$\alpha$, $\pi_{01}\alpha\pi_{01}$, $\pi_{02}\alpha\pi_{02}$, $\pi_{03}\alpha\pi_{03}$,\\
$\gamma'\pi_{02}$,\\
$\delta\pi_{02}$, $\delta'\pi_{02}$} (1);

\path (2) edge [loop right, ->] node [pos=0.5,right,align=center] {
$\pi_{02}\pi_{13}$,\\
$\alpha$, $\pi_{01}\alpha\pi_{01}$, $\pi_{02}\alpha\pi_{02}$, $\pi_{03}\alpha\pi_{03}$,\\
$\pi_{01}\pi_{02}\beta\pi_{01}$, $\pi_{01}\pi_{02}\beta'\pi_{01}$,\\ $\pi_{02}\beta\pi_{03}$, $\pi_{02}\beta'\pi_{03}$,\\
$\gamma\pi_{01}$,$\gamma\pi_{01}\pi_{03}$\\
$\gamma''\pi_{01}$,$\gamma''\pi_{01}\pi_{03}$} (2);

\path (1) edge [bend left, ->] node [pos=0.5,above,align=center] {
$\beta$, $\beta'$,
$\gamma'\pi_{01}$,$\gamma'\pi_{13}\pi_{01}$\\
$\delta\pi_{01}$, $\delta'\pi_{01}$,\\ $\delta\pi_{03}$, $\delta'\pi_{03}$} (2);

\path (2) edge [bend left, ->] node [pos=0.5,below,align=center] {
$\pi_{02}\beta\pi_{02}$, $\pi_{02}\beta'\pi_{02}$,\\
$\gamma \pi_{02}$, $\gamma'' \pi_{02}$} (1);
\end{tikzpicture}
}
\end{center}
\caption{A shift space over $\{0,1,2,3\}$ is minimal, dendric and has exactly one right special factor of each length if and only if it has a primitive $\cS_{1\text{R}}$-adic representation labeling an infinite path in this graph.}
\label{F:characterization of unique right special}
\end{figure}
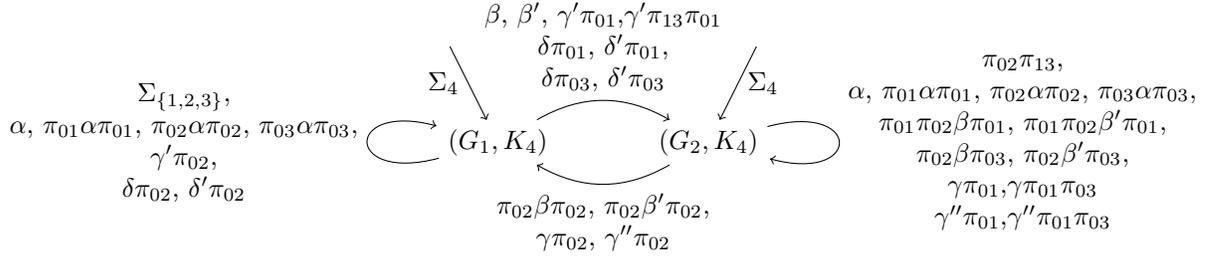

The converse result is also true, i.e. if a shift space $X$ has a primitive $\cS_{1\text{R}}$-adic representation $\bsigma$ labeling an infinite path in the graph of Figure~\ref{F:characterization of unique right special}, then it is a minimal dendric shift space over $\{0,1,2,3\}$ having exactly one right special factor of each length. Indeed, it is minimal dendric by the proof of  Theorem~\ref{T:main}. To conclude, we show that we can find infinitely many factors having four right extensions in $X$. This will imply that, for each length, there is a factor having four right extensions and it must be the unique right special factor since $X$ is dendric.

It follows from this observation: if $v$ has a left extension $a$ such that $a v$ has four right extensions and if $\sigma \in \cS_{1\text{R}}$ is a return morphism for $\ell$, then the extended image $\sigma(v) \ell$ has a left extension $b$ such that $b \sigma(v) \ell$ has four right extensions. Indeed, it suffices to take $b = \varphi^L_{\sigma, \eps}(a)$ by Proposition~\ref{P:image of graphs by phi} and by definition of the morphisms of $\cS_{1\text{R}}$.
For any $k > 0$, starting with $\eps$ in $X_\bsigma^{(k)}$, we can then find a word $u \in \cL(X)$ of length at least $k$ having four right extensions.

\subsection{Characterization of eventually dendric shifts}

Using the $S$-adic characterization obtained for minimal dendric shifts, we can deduce an $S$-adic characterization of minimal eventually dendric shifts. It is based on the fact that any eventually dendric shift space can be obtained from a dendric shift space in the following way.

\begin{prop}
Let $X$ be a minimal eventually dendric shift space with threshold $N$. For any non empty $w \in \cL(X)$, the derived shift $\cD_w(X)$ is eventually dendric with threshold at most $\max\{0, N - |w|\}$.
\end{prop}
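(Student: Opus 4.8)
The goal is to control the extension graphs of factors of $\cD_w(X)$ in terms of those of $X$. The natural tool is the coding morphism $\tau$ associated with $w$, which is a return morphism for $w$ with $\tau \cdot \cD_w(X) = X$ (after choosing the base word appropriately; by minimality $\cR_X(w)$ is finite and $\tau$ is well defined). So I would invoke the machinery of Section~\ref{S:image under return morphism}, but read "backwards": a factor $v$ of $\cD_w(X)$ of length $m$ produces an extended image $u = s\tau(v)p$ of $v$ in $X$ whose bi-extension graph $\cE_X(u)$ is, by Proposition~\ref{P:image of graphs by phi}, the image of $\cE_{\cD_w(X),\,s,\,p}(v)$ under the partial maps $\varphi^L_{\tau,s}$ and $\varphi^R_{\tau,p}$. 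The crucial point is that for $v$ long enough one can arrange the antecedent-triple to be $(\,\varepsilon,\,v,\,w\,)$, i.e. $u$ literally equals $\tau(v)w$ read between suitable letters, so that the relevant map is injective on extensions and the extension graph of $v$ in $\cD_w(X)$ is essentially \emph{isomorphic} to that of its extended image $u$ in $X$.

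**Key steps in order.**
First I would fix the coding morphism $\tau$ for $w$ and recall (Theorem~\ref{T:S-adic representation of minimal} / the setup of Section~\ref{subsection:S-adic representations}) that $\cD_w(X)$ is minimal and $\tau\cdot\cD_w(X)=X$. Second, given $v\in\cL(\cD_w(X))$ with $|v|\ge \max\{0,N-|w|\}$, I would show its extended image $u$ in $X$ has length $\ge N$, hence is dendric: indeed $u$ has $w$ as a factor, and when $v$ is nonempty $u=s\tau(v)p \supseteq \tau(v)w$ up to the border letters, so $|u|\ge |\tau(v)|+|w| \ge |v|+|w| \ge N$ (using $|\tau(a)|\ge 1$, i.e. each return word is nonempty; when $v=\varepsilon$ one checks directly that all short factors of $\cD_w(X)$ are handled, or simply that the bound $\max\{0,N-|w|\}$ makes $|u|\geq |w|\geq$ the trivially-dendric range). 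Third — the heart — I would establish that $\cE_{\cD_w(X)}(v) \cong \cE_X(u)$ as graphs: the left extensions of $v$ in $\cD_w(X)$ are exactly the return words $r$ with $rv\in\cL(\cD_w(X))$, and under $\tau$ these correspond bijectively to the left extension letters of $u$ in $X$ via $\varphi^L_{\tau,s}$, because $\tau(\cA)$ is a suffix code so distinct return words end in distinct letters after the common suffix $s$ is stripped; symmetrically on the right using that $\tau(\cA)w$ is a prefix code at $w$. The edge relation matches by Equation~\eqref{Eq: link between extensions}. Hence $\cE_{\cD_w(X)}(v)$ is a tree because $\cE_X(u)$ is, proving $v$ is dendric.

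**Main obstacle.**
The delicate part is the isomorphism claim in step three: one must check that for $v$ long enough the triple $(s,v,p)$ from Proposition~\ref{P:definition of antecedent} has $s$ and $p$ \emph{bounded independently of $v$} (in fact $s=\varepsilon$, $p=w$ for the standard coding morphism, but for an arbitrary coding morphism one only gets $s\in\cT^L(\tau)$, $p\in\cT^R(\tau)$), and more importantly that the maps $\varphi^L_{\tau,s},\varphi^R_{\tau,p}$ are \emph{injective} on the relevant extension sets so that no collapsing of vertices or merging of edges occurs — otherwise $\cE_X(u)$ could be a tree while $\cE_{\cD_w(X)}(v)$ has a cycle or is disconnected. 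This injectivity is exactly where minimality and the return-word structure are used: a left extension of $v$ in $\cD_w(X)$ is a whole return word $r$, and $rv$ determines $r$ uniquely, so the "extension letter" data downstairs really does record the full upstairs data with no loss. Once injectivity is in hand the graph isomorphism and hence dendricity follow, and the threshold bound $\max\{0,N-|w|\}$ is precisely the condition that forces $|u|\ge N$. I would also remark, as a sanity check, that this is consistent with Proposition~\ref{P:image of eventually dendric} read in the opposite direction: derivation can only \emph{lower} the threshold, by $|w|$.
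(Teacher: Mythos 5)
Your overall strategy --- reading the return-morphism machinery backwards, with $\cD_w(X)$ upstairs and $X = \tau\cdot\cD_w(X)$ downstairs --- is a reasonable starting point, and your length computation for the threshold is fine. The gap is in your key step three: the claimed isomorphism $\cE_{\cD_w(X)}(v)\cong\cE_X(u)$ for a \emph{single} extended image $u=s\tau(v)p$ is false in general, because $\varphi^L_{\tau,s}$ and $\varphi^R_{\tau,p}$ need not be injective on $E^L(v)$ and $E^R(v)$. You justify injectivity by saying that distinct return words end in distinct letters once the common suffix $s$ is stripped, but the longest common suffix is a \emph{pairwise} notion: $s_\tau(a,b)$ varies with the pair $(a,b)$, and in general no single $s\in\cT^L(\tau)$ separates all left extensions simultaneously. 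Concretely, for $\beta:0\mapsto 0,\ 1\mapsto 01,\ 2\mapsto 02,\ 3\mapsto 032$ (Example~\ref{Ex:dendric return morphism}), the map $\varphi^L_{\beta,\eps}$ sends both $2$ and $3$ to $2$, while $\varphi^L_{\beta,2}$ is only defined on $\{2,3\}$; a word $v$ upstairs with $E^L(v)=\{1,2,3\}$ therefore has no extended image whose extension graph records all three left extensions injectively. By Proposition~\ref{P:image of graphs by phi}, each $\cE_X(u)$ is only a quotient of a \emph{subgraph} $\cE_{\cD_w(X),s,p}(v)$ of $\cE_{\cD_w(X)}(v)$, so knowing that every $\cE_X(u)$ is a tree does not, by your argument, force $\cE_{\cD_w(X)}(v)$ to be a tree (a cycle upstairs could be collapsed downstairs, and a disconnection upstairs could be hidden by the restriction to $\cA^L_{\tau,s}\times\cA^R_{\tau,p}$).

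What is actually needed is to identify $\cE_{\cD_w(X)}(v)$ with the \emph{generalized} extension graph of $\tau(v)w$ in $X$, where left extensions range over the suffix code $\cR_X(w)$ of return words and right extensions over the corresponding prefix code of right return words, and then to prove --- by induction on these codes, peeling off one letter at a time and using the dendricity of each intermediate word of length at least $N$ --- that such generalized extension graphs are trees. This is exactly the content of the external results that the paper's one-line proof cites (the derivation theorem for tree sets together with its eventually dendric analogue); the paper does not reprove it, and your proposal does not supply a substitute for this induction. Your threshold bound $\max\{0,N-|w|\}$ does come out of that argument, since the shortest word whose dendricity is invoked is $\tau(v)w$, of length at least $|v|+|w|$.
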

\begin{proof}
The proof is almost exactly the same as in the dendric case (see~\citep[Theorem 5.13]{bifix_decoding}), replacing~\citep[Proposition 5.7]{bifix_decoding} by~\citep[Lemma 9.3]{Dolce_Perrin:2021}.
\end{proof}

By~\citep[Theorem 7.3]{Dolce_Perrin:2021}, if $X$ is eventually dendric and $w$ is of length at least $N$, $\cD_w(X)$ is on an alphabet of size $1 + p_X(N+1) - p_X(N)$.

The next result directly follows from the previous one and from Proposition~\ref{P:image of eventually dendric}.

\begin{thm}
\label{T:ult dendric iff derived dendric}
A minimal shift $X$ is eventually dendric if and only if it has a dendric derived shift, or equivalently, if and only if every derived shift with respect to a long enough factor of $X$ is dendric.
\end{thm}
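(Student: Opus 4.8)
The plan is to deduce Theorem~\ref{T:ult dendric iff derived dendric} from the preceding proposition together with Proposition~\ref{P:image of eventually dendric}, so that essentially no new computation is required; the work is in threading the three equivalent statements together in the right order. Write the three conditions as (i) $X$ is eventually dendric; (ii) $X$ has some dendric derived shift; (iii) every derived shift $\cD_w(X)$ with $|w|$ large enough is dendric. I would prove (i)$\Rightarrow$(iii)$\Rightarrow$(ii)$\Rightarrow$(i).

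For (i)$\Rightarrow$(iii): suppose $X$ is eventually dendric with threshold $N$. By the proposition just above (applied to any non-empty $w$), $\cD_w(X)$ is eventually dendric with threshold at most $\max\{0,N-|w|\}$; taking $|w|\geq N$ forces the threshold to be $0$, i.e. $\cD_w(X)$ is dendric. So every derived shift with respect to a factor of length at least $N$ is dendric, which is (iii). The implication (iii)$\Rightarrow$(ii) is immediate since $X$ is minimal and hence has factors of every length, so at least one such $w$ exists and $\cD_w(X)$ is dendric.

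For (ii)$\Rightarrow$(i): suppose $\cD_w(X)$ is dendric for some non-empty $w\in\cL(X)$. Let $\sigma$ be a coding morphism associated with $w$; it is a return morphism for $w$ (as recalled in Section~\ref{subsection:S-adic representations}), and $X$ is the image of $\cD_w(X)$ under $\sigma$, i.e. $X=\sigma\cdot\cD_w(X)$. Since a dendric shift is in particular eventually dendric (threshold $0$), Proposition~\ref{P:image of eventually dendric} applies and gives that $X=\sigma\cdot\cD_w(X)$ is eventually dendric, with an explicit bound on its threshold in terms of $\max_a|\sigma(a)|$, $|w|$ and the threshold of $\cD_w(X)$ (here the latter is $0$, or more conservatively one may invoke Proposition~\ref{P:equiv eventually dendric DP} with any valid $N$ for the dendric shift). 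This closes the cycle.

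I do not expect a genuine obstacle here, since both ingredients are already available; the only point requiring a little care is the direction (ii)$\Rightarrow$(i), where one must correctly identify $X$ as a morphic image of its derived shift via the coding morphism and check that this morphism is indeed a return morphism so that Proposition~\ref{P:image of eventually dendric} is applicable. If one wants the statement ``if and only if every derived shift with respect to a long enough factor is dendric'' in the strong form, one should also note, via Theorem~\ref{T:S-adic representation of minimal} and minimality, that derived shifts with respect to arbitrarily long factors exist, so ``long enough'' is not vacuous. Everything else is a routine assembly of the quoted results.
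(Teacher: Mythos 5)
Your proof is correct and follows exactly the route the paper intends: the paper's entire justification is the one-line remark that the theorem ``directly follows from the previous one and from Proposition~\ref{P:image of eventually dendric}'', and your three implications fill in precisely those two ingredients (the threshold bound $\max\{0,N-|w|\}$ for (i)$\Rightarrow$(iii), and $X=\sigma\cdot\cD_w(X)$ with $\sigma$ a return morphism plus Proposition~\ref{P:image of eventually dendric} for (ii)$\Rightarrow$(i)). No discrepancies.
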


The $S$-adic characterization of minimal eventually dendric shifts is now a direct consequence of Theorem~\ref{T:main}.
\begin{thm}
\label{thm:caract ev dendric}
A shift space is minimal eventually dendric if and only if it has a primitive $S$-adic representation $(\sigma_n)_{n \geq 0}$ where $\sigma_0$ is a return morphism, $\sigma_n$ is a dendric return morphism for all $n \geq 1$ and $(\sigma_n)_{n \geq 1}$ labels infinite paths in $\cG^L(\cS)$ and $\cG^R(\cS)$ where $\cS = \{\sigma_n : n \geq 1\}$.
\end{thm}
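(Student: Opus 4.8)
The plan is to reduce the statement to Theorem~\ref{T:main} by peeling off the first morphism $\sigma_0$. The organizing fact is Theorem~\ref{T:ult dendric iff derived dendric}: a minimal shift is eventually dendric exactly when one of its derived shifts is dendric.

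For the direct implication, suppose $X$ is minimal eventually dendric. First I would apply Theorem~\ref{T:ult dendric iff derived dendric} to obtain a non-empty word $w \in \cL(X)$ such that $X_1 := \cD_w(X)$ is dendric; since a derived shift of a minimal shift is minimal, $X_1$ is a \emph{minimal} dendric shift. Let $\sigma_0$ be a coding morphism associated with $w$: it is a return morphism and, $X$ being minimal, $X = \sigma_0 \cdot X_1$. Next I would apply Theorem~\ref{T:S-adic representation of minimal} and Theorem~\ref{T:decoding dendric} to $X_1$ to get a primitive $S$-adic representation $(\sigma_n)_{n\geq 1}$ of $X_1$ by (tame) dendric return morphisms, and then the argument in the proof of Theorem~\ref{T:main} (direct implication) to conclude that $(\sigma_n)_{n\geq 1}$ labels infinite paths in $\cG^L(\cS)$ and $\cG^R(\cS)$ with $\cS = \{\sigma_n : n \geq 1\}$. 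It then remains to check that $\bsigma = (\sigma_n)_{n\geq 0}$ is a primitive $S$-adic representation of $X$. That $X_{\bsigma} = X$ follows from $X_{\bsigma}^{(1)} = X_1$ and $X = \sigma_0 \cdot X_1$. Primitivity at levels $n \geq 1$ is inherited from $(\sigma_n)_{n\geq 1}$; for level $0$ I would choose $N$ so that every letter of $\cA_1$ occurs in every $\sigma_{[1,N)}(b)$, whence $\sigma_{[0,N)}(b) = \sigma_0(\sigma_{[1,N)}(b))$ contains $\sigma_0(a)$ for every $a \in \cA_1$, and then use that $\sigma_0(\cA_1) = \cR_X(w)$ together with minimality of $X$ over $\cA_0$ to conclude that every letter of $\cA_0$ appears in $\sigma_{[0,N)}(b)$.

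For the converse, suppose $X$ admits a representation as in the statement and set $X_1 := X_{\bsigma}^{(1)}$. Then $(\sigma_n)_{n\geq 1}$ is a primitive $S$-adic representation of $X_1$ by dendric return morphisms labeling infinite paths in $\cG^L(\cS)$ and $\cG^R(\cS)$, so Theorem~\ref{T:main} gives that $X_1$ is minimal dendric. Since $X = X_{\bsigma}^{(0)} = \sigma_0 \cdot X_1$ with $\sigma_0$ a return morphism, Proposition~\ref{P:image of eventually dendric} shows $X$ is eventually dendric, and $X = X_{\bsigma}$ is minimal because $\bsigma$ is primitive.

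The mathematical content here is short; the only genuinely non-automatic points — as opposed to the appeals to the already-established machinery — are bookkeeping ones, and I expect them to be the main (mild) obstacle: verifying that prepending $\sigma_0$ preserves primitivity (the level-$0$ case above), and recording that $X = \sigma_0 \cdot \cD_w(X)$ for a coding morphism of $w$ when $X$ is minimal. A further subtlety is that the direct implication uses the \emph{proof} of Theorem~\ref{T:main}, not just its statement, since one needs that the particular return-word representation of $X_1$ supplied by Theorem~\ref{T:S-adic representation of minimal} is among the representations labeling infinite paths in the two graphs.
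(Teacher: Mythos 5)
Your proposal is correct and follows essentially the same route as the paper, which states this theorem without a written proof, presenting it as a direct consequence of Theorem~\ref{T:ult dendric iff derived dendric} (pass to a dendric derived shift) and Theorem~\ref{T:main} (applied to that derived shift), exactly the reduction you carry out. The bookkeeping points you flag (primitivity at level $0$ after prepending $\sigma_0$, and $X=\sigma_0\cdot\cD_w(X)$) are handled correctly and are indeed the only details the paper leaves implicit.
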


\subsection{Decidability of eventual dendricity for minimal substitutive shifts}

A deeply studied class of shift spaces is the one of substitutive (or morphic) shifts that roughly correspond to $S$-adic spaces with an eventually periodic $S$-adic representation.
A shift space $X$ is said to be morphic if there exist morphisms $\sigma:\cB^* \to \cB^*$ and $\tau:\cB^* \to \cA^*$ such that 
\[
X = \{x \in \cA^\mathbb{Z} \mid \forall u \in \cL(x), \exists b \in \cB, n \in \mathbb{N}: u \in \cL(\tau(\sigma^n(b))\}.
\]
We let $X(\sigma,\tau)$ denote such a shift.
In this section, we show that, under minimality, (eventual) dendricity is decidable for this class of shift spaces.
Whenever $\tau$ is the identity, the minimality of $X$ is also decidable~\citep{Perrin_Beal_Restovo}.

It is classical~\citep{Cobham:erasing} that we can suppose that $\sigma$ is non-erasing ($\sigma(a) \neq \varepsilon$ for every letter) and $\tau$ is a coding ($|\tau(a)|=1$ for every letter) so we always make such an assumption.

Observe that one usually considers the restricted version of morphic shifts consisting in shifts generated by a morphic word.
The shift space generated by an infinite word $x \in \cA^\mathbb{N}$ is the shift space $X_x = \{y \in \cA^\mathbb{Z} \mid \cL(y) \subseteq \cL(x)\}$.
Whenever $x$ is {\em uniformly recurrent} (that is, every finite word $u \in \cL(x)$ occurs infinitely many times in $x$ and with bounded gaps), the shift space $X_x$ is minimal. 
An infinite word $x$ is {\em morphic} if there exist morphisms $\sigma:\cB^* \to \cB^*$ and $\tau:\cB^* \to \cA^*$ such that $\sigma$ is {\em prolongable} on some letter $b \in \cB$ (that is, $\sigma(b) \in b\cB^*$ and $\lim_{n \to +\infty}|\sigma^n(b)|=+\infty$) and $x = \tau(\sigma^\omega(b)) = \lim_{n\to +\infty} \tau(\sigma^n(b^\omega))$.
It is also decidable whether a morphic word is uniformly recurrent~\citep{Durand:UR}. Though, it is not a requirement for it to generate a minimal shift space, i.e., a non uniformly recurrent word $x$ could generate a minimal shift space.
Thus, deciding if $X(\sigma,\tau)$ is minimal is an open problem.

A key argument to decide eventual dendricity in a minimal morphic shift is to make more deterministic the $S$-adic construction by return words described in Section~\ref{subsection:S-adic representations}.
More precisely, using the notation of the $S$-adic construction of Section~\ref{subsection:S-adic representations} (on Page~\pageref{itemize S-adic}), we want to algorithmically chose the words $w_n$ for which we consider return words as well as the coding morphisms $\sigma_n:R_{n+1}^* \to R_n^*$.
This can be achieved by fixing some morphic word generating $X(\sigma,\tau)$. 

\begin{lem}
\label{lem:compute morphisms}
If $X(\sigma,\tau)$ is minimal, then it is generated by a uniformly recurrent morphic word $x$ and one can compute morphisms $\sigma',\tau'$ such that $x = \tau'(\sigma'^\omega(b'))$.
\end{lem}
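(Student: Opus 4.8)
The plan is to reduce the statement to three known decidable/constructive facts and stitch them together. First, I would recall that any minimal morphic shift $X(\sigma,\tau)$ admits a generating morphic word: since we may assume $\sigma$ is non-erasing, some power $\sigma^k$ is prolongable on some letter $b$ occurring in an $\sigma$-periodic point (replace $\sigma$ by a suitable power and restrict to the letters actually reachable, i.e. the ``minimal'' alphabet assumption), and $y = \tau(\sigma^{k\omega}(b))$ is a one-sided morphic word with $\cL(y)\subseteq\cL(X(\sigma,\tau))$. By minimality of $X(\sigma,\tau)$ one gets $\cL(X_y)=\cL(X(\sigma,\tau))$, so $X(\sigma,\tau)=X_y$ and $y$ is a morphic word generating the shift, with $\sigma',\tau'$ explicitly computed from $\sigma,\tau$ by the usual power/normalization construction of~\cite{Cobham:erasing}.

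The second ingredient is that such a $y$ can be taken uniformly recurrent. Here I would invoke the decidability of uniform recurrence for morphic words~\cite{Durand:UR}: given the (finitely many, effectively enumerable) morphic words generating $X(\sigma,\tau)$ — or more simply, given that $X(\sigma,\tau)=X_y$ is minimal — one tests whether $y$ itself is uniformly recurrent, and if not, one searches (this search terminates because $X(\sigma,\tau)$ \emph{is} minimal, hence \emph{some} generating word, e.g. any point of $X(\sigma,\tau)$ written in a suitable substitutive form, is uniformly recurrent) among the morphic words obtained by shifting $y$ to a uniformly recurrent point. Concretely: minimality of $X_y$ forces $y$ to be uniformly recurrent already, since every factor of a minimal subshift recurs with bounded gaps in every point of the subshift; so in fact the uniform recurrence of $y$ is automatic once $X(\sigma,\tau)=X_y$ is known to be minimal, and no extra search is needed — one only needs to have checked the minimality hypothesis, which is given.

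The third step is purely bookkeeping: output $\sigma'=\sigma^k$ (normalized to be non-erasing and prolongable on $b$) and $\tau'=\tau$ (normalized to a coding), so that $x := \tau'(\sigma'^\omega(b))$ equals $y$, is uniformly recurrent, and generates $X(\sigma,\tau)$. All the operations involved — taking powers of a morphism, detecting prolongability, restricting to the reachable sub-alphabet, and the Cobham normalization removing erasing letters — are effective, so $\sigma'$ and $\tau'$ are computable from the input data $\sigma,\tau$.

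The main obstacle is the second step: one must be careful that ``generating word'' and ``minimal shift'' interact correctly, i.e. that from $X(\sigma,\tau)$ minimal one can genuinely \emph{produce} a uniformly recurrent generating morphic word rather than merely know one exists abstractly. The clean way around this is the observation above — that in a minimal subshift every generating word is automatically uniformly recurrent — which converts the apparent need for a search into a triviality, leaving only the (standard, effective) passage from $X(\sigma,\tau)$ to the substitutive word $y=\tau(\sigma^{k\omega}(b))$ to justify in detail.
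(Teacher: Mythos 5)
There is a genuine gap, and it sits exactly where you locate the ``main obstacle'' --- except the obstacle is not in your second step but in your first. You assert that, since $\sigma$ may be taken non-erasing, ``some power $\sigma^k$ is prolongable on some letter $b$,'' citing the usual normalization. This is false as stated: for $\sigma : a \mapsto ba,\ b \mapsto b$ one has $\sigma^n(a) = b^n a$ and $\sigma^n(b)=b$, so no power of $\sigma$ is prolongable on any letter, yet $X(\sigma,\mathrm{id})=\{\cdots bbb\cdots\}$ is minimal. The Cobham-style normalization you invoke removes erasing letters from an \emph{already prolongable} presentation; it does not produce a prolongable letter. Producing one is the actual content of the lemma, and it genuinely requires the hypotheses. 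The paper first splits off the case where $X(\sigma,\tau)$ is finite (periodic), where it builds a fresh morphism $\sigma'(a)=u^2$ from the period $u$ --- precisely because, as the example shows, no power of the original $\sigma$ need be prolongable there. In the aperiodic case it decomposes $\cA$ into growing and bounded letters, writes $\sigma^n(a)=u_n a_n v_n$ with $a_n$ the first growing letter, applies pigeonhole to get $\sigma^{\ell-k}(b')=ub'v$ with $u$ a word of bounded letters, and then argues that $u=\varepsilon$ \emph{using aperiodicity and minimality} (a nonempty bounded-letter prefix would force arbitrarily long powers of a bounded word into the language, hence periodicity). None of this is in your sketch, and without it the claimed $\sigma'=\sigma^k$ simply may not exist.

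Your second and third steps are fine and match the paper's (implicit) reasoning: once $x=\tau(\sigma'^{\omega}(b'))$ is known to satisfy $\cL(x)\subseteq\cL(X(\sigma,\tau))$, minimality gives $X_x=X(\sigma,\tau)$ and uniform recurrence of $x$ comes for free, so no search is needed and the decidability result of~\cite{Durand:UR} is not required here. To repair the proof, add the periodic/aperiodic case split and the growing-letter pigeonhole argument together with the $u=\varepsilon$ step; both are effective, which is what makes $\sigma'$ and $\tau'$ computable.
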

\begin{proof}
If $X(\sigma,\tau)$ is finite, then it is generated by $x = u^\omega$, where $u$ is a finite word. It is direct to check that $x = \sigma'^\omega(b')$, where $b'$ is the first letter of $u$ and $\sigma'(a) = u^2$ for every letter $a$.

Assume now that $X(\sigma,\tau)$ is aperiodic and let $B_g, B_b$ be the subalphabets of growing and bounded letters, i.e.,
\begin{align*}
    B_g &= \{b \in \cB \mid \lim_{n\to +\infty} |\sigma^n(b)| = +\infty\} \\
    B_b &= \{b \in \cB \mid (|\sigma^n(b)|)_{n \geq 0} \text{ is bounded}\}. 
\end{align*}
The set $B_g$ is non-empty by aperiodicity, so let $a \in B_g$.
For every $n \in \mathbb{N}$, we set $\sigma^n(a) = u_n a_n v_n$, where $a_n \in B_g$ and $u_n \in B_b^*$.
By the pigeonhole principle, we can find $k<\ell$ such that $a_k = a_\ell = b'$.
We thus have $\sigma^{\ell-k}(b') =ub'v$ for some word $u \in B_b^*$, and $\sigma^{n(\ell-k)}(b')$ starts with 
\[
\sigma^{(n-1)(\ell-k)}(u)\sigma^{(n-2)(\ell-k)}(u) \cdots \sigma^{\ell-k}(u)u
\]
for all $n$.
As $u \in B_b^*$, the sequence $(\sigma^{n(\ell-k)}(u))_{n \geq 0}$ is eventually periodic. Hence, as $\sigma$ and $\tau$ are non-erasing, $u = \varepsilon$ otherwise this contradicts the aperiodicity of $X(\sigma,\tau)$.
In other words, $\sigma' = \sigma^{\ell-k}$ is prolongable on $b'$ and, by minimality, the word $\tau(\sigma'^\omega(b'))$ generates $X(\sigma,\tau)$.
\end{proof}

Assume now that $x \in \cA^\mathbb{N}$ is a uniformly recurrent morphic word generating $X(\sigma,\tau)$.
If $u$ is a non-empty prefix of $x$, we set $R(u) = \{1,\dots,\Card(\cR(u))\}$ and we define $\theta_{u}:R(u)^* \to \cA^*$ so that $\theta_{u}(R(u)) = \cR(u)$ and according to the order of appearance of return words to $u$ in $x$. 
That is, for all $i \in R(u)$, if $k$ is the first occurrence of $\theta_{u}(i)u$ in $x$, then $x_{[0,k)}$ belongs to $\theta_{u}(\{1,\dots,i-1\}^*)$.
Then there exists a unique infinite word $\cD_u(x)$, called the {\em derived sequence of $x$ (with respect to $u$)} such that $\theta_{u}(\cD_u(x)) = x$.

\begin{thm}[Durand~\citep{Durand:UR}]
\label{thm:durand_caclculalbe}
Let $x = \tau(\sigma^\omega(b))$ be an aperiodic uniformly recurrent morphic word in $\cA^\mathbb{N}$.
\begin{enumerate}
\item
For every non-empty prefix $u$ of $x$, the morphism $\theta_{u}$ is computable and there exist some computable morphisms $\sigma_{u}: \cC^* \to \cC^*$ and $\tau_{u}: \cC^* \to R(u)^*$ such that $\cD_u(x) = \tau_{u}(\sigma_{u}^\omega(1))$.
\item
There is computable constant $D$ such that the set $\{(\sigma_{u},\tau_{u}) \mid u \text{ non-empty prefix of } x\}$ has cardinality at most $D$. 
In particular, the number of derived sequences of $x$ (on its non-empty prefixes) is at most $D$. 
\end{enumerate}
\end{thm}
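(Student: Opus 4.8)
The plan is to follow Durand's route through \emph{return substitutions}: first make the language of $x$ effectively accessible, then show each $\cD_u(x)$ is morphic by ``desubstituting return words'', and finally bound the number of these derived sequences by a stabilization argument.

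\emph{Computing $\theta_u$.} The first step is to exploit that a uniformly recurrent morphic word has a \emph{computable recurrence function} $R_x\colon\N\to\N$, where $R_x(n)$ is the least $N$ such that every length-$n$ factor of $x$ occurs in every factor of $x$ of length $N$; obtaining such a bound effectively from $\sigma,\tau$ is in the same spirit as Lemma~\ref{lem:compute morphisms}. Given a non-empty prefix $u$, every return word to $u$ then has length at most $R_x(|u|)$, and all of them, together with their order of first appearance, can be read off from the prefix $x_{[0,R_x(|u|)+|u|)}$ of $x$, which is computable. This determines the alphabet $R(u)$ and the coding-by-return-words morphism $\theta_u$, so $\theta_u$ is computable from $u$.

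\emph{$\cD_u(x)$ is morphic, effectively.} Here I would pass to the purely substitutive word $y=\sigma^\omega(b)$ (with $x=\tau(y)$ and $\tau$ a coding) and use recognizability of $\sigma$. For a prefix $w$ of $y$, let $w'$ be a suitable shorter prefix of $y$ such that, for $|w|$ above a computable threshold, the $\sigma$-image of each return word to $w'$ decomposes \emph{uniquely} into return words to $w$ (boundary effects are absorbed because $w$ is a prefix of the fixed point $y$). Recording these decompositions defines a substitution $\sigma_w$ on $R(w')$ with $\theta_w\circ\sigma_w=\sigma\circ\theta_{w'}$, prolongable on the return word that is a prefix of $y$. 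Iterating $w\mapsto w'$ decreases the length (dividing, in the growing part, by $\min_c|\sigma(c)|>1$) until one reaches bounded-length prefixes, and, after renaming onto a fixed alphabet $\cC$, yields a computable $\sigma_u$ with $\cD_u(y)=\sigma_u^\omega(1)$ and, via $\tau$ and the match between the return words of $x$ and those of $y$, a computable coding $\tau_u$ with $\cD_u(x)=\tau_u(\sigma_u^\omega(1))$.

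\emph{Finiteness with a computable $D$.} The last step combines two ingredients. First, derivation composes: a return word to a return word is again a return word, so each $\cD_u(\cD_v(x))$ is some $\cD_{u''}(x)$, and the operation $\sigma\rightsquigarrow\sigma_w$ is, morally, idempotent. Second, once $|w|$ exceeds the threshold of the previous step, the finite table ``for each return word $r$ to $w'$, which sequence of return words to $w$ is $\sigma(r)$'' takes only finitely many shapes up to relabeling, because past that length the number of return words to factors of $x$ is uniformly bounded (the main point, discussed below) and the decompositions have length bounded in terms of $\#\cB$ and $\max_c|\sigma(c)|$. Enumerating these finitely many shapes, and checking effectively which ones actually occur, produces a computable $D$ bounding $\#\{(\sigma_u,\tau_u)\}$; since $\cD_u(x)$ is determined by $(\sigma_u,\tau_u)$, the number of distinct derived sequences of $x$ is at most $D$.

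\emph{Main obstacle.} I expect the real difficulty to be the second ingredient above: proving that past a computable length the return-word structure of $x$ stabilizes into a finite, effectively listable set — equivalently, that an aperiodic uniformly recurrent morphic word has a uniformly bounded, computable number of return words to each of its factors (so that, by Durand's return-word theorem, it is primitive substitutive). Granting this effective statement, the remainder of the argument is bookkeeping around the recognizability of $\sigma$ and the order-of-appearance conventions used to normalize $\theta_u$.
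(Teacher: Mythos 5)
The paper does not actually prove this statement: its ``proof'' consists of three citations to Durand's work (Proposition~28, Section~4 and Theorem~29 of the cited paper on uniform recurrence), so the relevant comparison is whether your sketch could stand on its own as a replacement for those citations. It cannot, and you have in fact identified the reason yourself: the entire quantitative content of the theorem --- the computable constant $D$ and, implicitly, the computability of $\sigma_u,\tau_u$ uniformly in $u$ --- rests on the fact that an aperiodic uniformly recurrent morphic word has a uniformly bounded, effectively computable number of return words to each of its factors (equivalently, that it is primitive substitutive, indeed linearly recurrent, with computable constants). This is not ``bookkeeping around recognizability''; it is precisely the substance of Durand's Theorem~29 and is a genuinely hard result. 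Declaring it the ``main obstacle'' and then ``granting this effective statement'' means the proposal assumes the theorem it sets out to prove. The remaining architecture of your sketch (computable recurrence function, reading off $\theta_u$ from a prefix, return substitutions obtained by desubstitution, stabilization of the finitely many ``shapes'' up to relabeling) is a faithful outline of Durand's actual strategy, so as a roadmap it is sound; as a proof it has a hole exactly where the difficulty lies.

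Two smaller points. First, your claim that all return words to $u$ and their order of first appearance can be read off the prefix $x_{[0,R_x(|u|)+|u|)}$ is not quite right: that prefix is guaranteed to contain every factor of length $|u|$, but to be sure of seeing every word $ru$ with $r$ a return word (so $|ru|\leq R_x(|u|)+|u|$) you need a prefix of length about $R_x(R_x(|u|)+|u|)$. This is harmless but should be fixed. Second, your appeal to recognizability of $\sigma$ is delicate here because $\sigma$ is not assumed primitive (only $x$ is uniformly recurrent); Durand's construction deliberately works with return words to prefixes of the fixed point, where prolongability substitutes for full recognizability, and a clean write-up would need to follow that route rather than invoke recognizability as a black box.
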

\begin{proof}
The existence of $\sigma_{u}$ and $\tau_{u}$ is~\citep[Proposition 28]{Durand:UR} and the fact that $\theta_{u}$, $\sigma_{u}$ and $\tau_{u}$ are computable is explained in~\citep[Section 4]{Durand:UR}. 
The bound on the number of possible pairs $(\sigma_{u},\tau_{u})$ is~\citep[Theorem 29]{Durand:UR}.
\end{proof}

\begin{lem}[Durand~\citep{Durand:1998}]
\label{lemma:derived of derived}
Let $x$ be an aperiodic uniformly recurrent infinite word and let $u$ be a non-empty prefix of $x$.
For every non-empty prefix $v$ of $\cD_u(x)$, we have $\cD_v(\cD_u(x)) = \cD_w(x)$, where $w = \theta_{x,u}(v)u$\footnote{Note that we added a subscript $x$ to $\theta_{x,u}$ to emphasize that it is a coding morphism for return words to $u$ in $x$.}.
\end{lem}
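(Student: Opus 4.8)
The plan is to reduce the statement to the single identity $\theta_{x,w} = \theta_{x,u}\circ\theta_{\cD_u(x),v}$, after identifying the two index alphabets (both of which are $\{1,\dots,\Card \cR_x(w)\}$). Granting this, one applies $\theta_{x,u}$ to the defining equality $\theta_{\cD_u(x),v}(\cD_v(\cD_u(x))) = \cD_u(x)$ to obtain $\theta_{x,w}(\cD_v(\cD_u(x))) = \theta_{x,u}(\cD_u(x)) = x$, and the uniqueness of the word whose $\theta_{x,w}$-image is $x$ forces $\cD_v(\cD_u(x)) = \cD_w(x)$. One first checks that $w = \theta_{x,u}(v)u$ really is a non-empty prefix of $x$: since $\theta_{x,u}(\cD_u(x)) = x$ and $v$ is a prefix of $\cD_u(x)$, the word $x$ starts with $\theta_{x,u}(v)$, and the block immediately following is the image of a letter, hence a return word to $u$, hence starts with $u$.

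Write $y = \cD_u(x)$ and $p_j = |\theta_{x,u}(y_{[0,j)})|$, so that $x = \theta_{x,u}(y)$ has block boundaries at the positions $p_j$. The key step is a recognizability statement: the occurrences of $u$ in $x$ are exactly the positions $p_j$, $j \geq 0$. This is immediate from the fact that each block $\theta_{x,u}(y_j)$ is a return word to $u$ (so $\theta_{x,u}(y_j)u$ contains $u$ only as a prefix and a suffix), applied block by block; in particular, reading off the occurrences of $u$ inside a block-aligned factor of $x$ recovers its $\theta_{x,u}$-preimage. From this I would deduce a length- and order-preserving bijection between the occurrences of $v$ in $y$ and the occurrences of $w$ in $x$, sending an occurrence of $v$ at position $j$ to an occurrence of $w$ at position $p_j$: indeed $x[p_j,p_{j+|v|}) = \theta_{x,u}(v)$ is then followed by the prefix $u$ of the next block, giving $w$; conversely $w$ begins with $u$, so any occurrence of $w$ lies at some $p_j$, and decoding the block-aligned factor $\theta_{x,u}(v)$ recovers $v$. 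In particular $\theta_{x,u}$ carries the decomposition of $y$ into return words to $v$ onto the decomposition of $x$ into return words to $w$, and thus restricts to a bijection $\cR_{\cD_u(x)}(v) \to \cR_x(w)$, $r \mapsto \theta_{x,u}(r)$.

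It then remains to check that Durand's first-occurrence numbering is respected. Fix an index $i$, put $r^{(i)} = \theta_{\cD_u(x),v}(i)$ and $\rho^{(i)} = \theta_{x,u}(r^{(i)})$. Applying the bijection of the previous paragraph with $r^{(i)}v$ in place of $v$, the first occurrence of $\rho^{(i)}w = \theta_{x,u}(r^{(i)}v)u$ in $x$ occurs at $p_{j_i}$, where $j_i$ is the position of the first occurrence of $r^{(i)}v$ in $y$. By definition of $\theta_{\cD_u(x),v}$, the prefix $y_{[0,j_i)}$ lies in $\theta_{\cD_u(x),v}(\{1,\dots,i-1\}^*)$; applying $\theta_{x,u}$ and using $x_{[0,p_{j_i})} = \theta_{x,u}(y_{[0,j_i)})$ shows that $x_{[0,p_{j_i})}$ lies in $(\theta_{x,u}\circ\theta_{\cD_u(x),v})(\{1,\dots,i-1\}^*)$. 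This is precisely the property characterizing the coding morphism $\theta_{x,w}$; since $\theta_{x,u}\circ\theta_{\cD_u(x),v}$ is moreover a bijection onto $\cR_x(w)$, the uniqueness of this canonical numbering yields $\theta_{x,w} = \theta_{x,u}\circ\theta_{\cD_u(x),v}$, and the lemma follows.

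The main obstacle is the recognizability statement of the second paragraph together with the ensuing bookkeeping: although both are ``routine'', they rely essentially on the return-word structure (so that occurrences of $u$, and then of $w$, sit at block boundaries) and on the injectivity of $\theta_{x,u}$ on words, and the identification of the index alphabets must be tracked carefully so that the greedy first-occurrence orderings on the two sides really do match.
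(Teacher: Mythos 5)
The paper does not prove this lemma; it is quoted directly from Durand's 1998 paper, so there is no internal proof to compare against. Your argument is correct and is essentially Durand's original one: recognizability of the block decomposition (occurrences of $u$ in $x$ are exactly the positions $p_j$), the induced order-preserving bijection between occurrences of $v$ in $\cD_u(x)$ and occurrences of $w=\theta_{x,u}(v)u$ in $x$, the resulting identity $\theta_{x,w}=\theta_{x,u}\circ\theta_{\cD_u(x),v}$ after checking the first-occurrence numbering, and then uniqueness of the $\theta_{x,w}$-preimage of $x$. The only imprecision is the phrase ``hence a return word to $u$, hence starts with $u$'': a return word $r$ shorter than $u$ need not start with $u$; what is true (and suffices) is that $ru$, and hence the tail of $x$ from each block boundary, starts with $u$, so $u$ occurs at every $p_j$.
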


\begin{thm}
\label{thm:morphic with return morphisms}
Let $x = \tau(\sigma^\omega(b))$ be an aperiodic uniformly recurrent morphic word.
One can algorithmically compute (from $(\sigma,\tau,b)$) two return morphisms $\theta:\cC^* \to \cC^*$ and $\lambda:\cC^* \to \cA^*$ such that $\theta$ is primitive and $x = \lambda(\theta^\omega(1))$.
\end{thm}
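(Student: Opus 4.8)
The plan is to combine the computability results of Durand (Theorem~\ref{thm:durand_caclculalbe} and Lemmas~\ref{lemma:derived of derived} and~\ref{lem:compute morphisms}) with the $S$-adic-by-return-words construction of Section~\ref{subsection:S-adic representations}, and then to force periodicity of the directive sequence by exploiting the fact that only finitely many derived sequences occur. First I would use Lemma~\ref{lem:compute morphisms} to reduce to the case where $x$ itself is the given uniformly recurrent morphic word $\tau(\sigma^\omega(b))$; if $X(\sigma,\tau)$ is periodic the statement is essentially trivial (take $\theta$ to be a suitable power on a one-letter alphabet), so assume $x$ aperiodic. By Theorem~\ref{thm:durand_caclculalbe}, for every non-empty prefix $u$ of $x$ the coding morphism $\theta_{x,u}$ for return words to $u$ is computable, the derived sequence $\cD_u(x)$ is a computable morphic word $\tau_u(\sigma_u^\omega(1))$, and the set of pairs $(\sigma_u,\tau_u)$ as $u$ ranges over non-empty prefixes is finite of computable cardinality $D$.

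The key point is to produce a prefix $u$ of $x$ and a prefix $v$ of $\cD_u(x)$ (with $v$ non-empty) such that $\cD_v(\cD_u(x))$ is conjugate to $\cD_u(x)$ in a way that makes the return coding morphism literally periodic. Concretely, consider the nested prefixes $u^{(0)} = x_{[0,1)}$ and, inductively, $u^{(n+1)} = \theta_{x,u^{(n)}}\big((\cD_{u^{(n)}}(x))_{[0,1)}\big)\,u^{(n)}$; by Lemma~\ref{lemma:derived of derived} the derived sequence of $x$ with respect to $u^{(n)}$ is exactly the $n$-th iterated derived sequence, and the corresponding coding morphisms compose to give an $S$-adic representation of $x$ by return morphisms exactly as on Page~\pageref{itemize S-adic}. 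Since there are at most $D$ distinct pairs $(\sigma_{u^{(n)}},\tau_{u^{(n)}})$, the pigeonhole principle yields $k<\ell\le D$ with $(\sigma_{u^{(k)}},\tau_{u^{(k)}}) = (\sigma_{u^{(\ell)}},\tau_{u^{(\ell)}})$, hence $\cD_{u^{(k)}}(x)$ and $\cD_{u^{(\ell)}}(x)$ are the \emph{same} infinite word; setting $v$ to be the non-empty prefix of $\cD_{u^{(k)}}(x)$ with $\theta_{x,u^{(k)}}(v)u^{(k)} = u^{(\ell)}$, Lemma~\ref{lemma:derived of derived} gives $\cD_v\big(\cD_{u^{(k)}}(x)\big) = \cD_{u^{(k)}}(x)$, so the return coding morphism $\theta$ for $v$ in $\cD_{u^{(k)}}(x)$ is a return morphism whose iterated application reproduces this word: $\cD_{u^{(k)}}(x) = \theta^\omega(1)$ up to renaming. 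One then takes $\lambda = \theta_{x,u^{(k)}}$ (a return morphism by construction), so $x = \lambda\big(\cD_{u^{(k)}}(x)\big) = \lambda(\theta^\omega(1))$, and all of $u^{(k)}, v, \theta, \lambda$ are computable by the first part of Theorem~\ref{thm:durand_caclculalbe}. Primitivity of $\theta$ follows from minimality of the derived shift $\cD_v(\cD_{u^{(k)}}(x))$ exactly as in Theorem~\ref{T:S-adic representation of minimal}, or directly because a fixed point of a primitive substitution is the relevant regime; if the iterate is not primitive, replacing $\theta$ by a suitable power (again computable, using uniform recurrence to bound the needed exponent) makes it so.

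I expect the main obstacle to be the bookkeeping ensuring that the $u^{(n)}$ are genuinely nested prefixes of $x$ and that the composition of the successive return coding morphisms is exactly $\lambda$ composed with the return morphism for the intermediate prefix — i.e. correctly tracking how Lemma~\ref{lemma:derived of derived} stacks, so that the collision of pairs $(\sigma_{u^{(k)}},\tau_{u^{(k)}}) = (\sigma_{u^{(\ell)}},\tau_{u^{(\ell)}})$ really does force $\cD_{u^{(k)}}(x) = \cD_{u^{(\ell)}}(x)$ as words (and not merely as shift spaces), since the ordering convention on return words in the definition of $\theta_u$ is what pins down the derived sequence uniquely. A secondary technical point is verifying that $\theta$ is prolongable on the letter $1$, which should follow from the order-of-appearance convention (the first return word to $v$ read in $\cD_{u^{(k)}}(x)$ is the one starting the word) combined with aperiodicity, mirroring the argument in the proof of Lemma~\ref{lem:compute morphisms} that eliminates the bounded prefix $u$.
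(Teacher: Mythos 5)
Your proposal is correct and follows essentially the same route as the paper: iterate derivation with respect to the initial letter (equivalently, the nested prefixes $u^{(n)}$), invoke Durand's finiteness of the pairs $(\sigma_u,\tau_u)$ to find $k<\ell$ with $\cD_{u^{(k)}}(x)=\cD_{u^{(\ell)}}(x)$, and take $\lambda$ to be the coding morphism down to level $k$ and $\theta$ the return morphism realizing the repetition, with primitivity and prolongability on $1$ coming from Theorem~\ref{T:S-adic representation of minimal} and the order-of-appearance convention. The only (immaterial) difference is that the paper writes $\theta$ as the composition $\theta_k\circ\cdots\circ\theta_{\ell-1}$ of the one-step coding morphisms, whereas you take the single coding morphism for the word $v$ with $\theta_{x,u^{(k)}}(v)u^{(k)}=u^{(\ell)}$; these agree by Lemma~\ref{lemma:derived of derived}.
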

\begin{proof}
We define a sequence of derivated sequence of $x$ as follows.
Fix $x^{(0)} = x$ and $w_0 = x^{(0)}_0$, i.e., $w_0$ is the first letter of $x^{(0)}$.
For every integer $n \geq 1$, we set $x^{(n)} = \cD_{w_{n-1}}(x^{(n-1)})$ and $w_n = 1$. 
In other words, $x^{(1)}$ is the derived sequence of $x$ with respect to the first letter of $x$ and, for every $n \geq 2$, $x^{(n)}$ is the derived sequence of $x^{(n-1)}$ with respect to the letter 1 (which is the first letter of $x^{(n-1)}$).
By iterating Lemma~\ref{lemma:derived of derived}, we deduce that there is a sequence of words $u_n \in \cL(x)$ such that $\lim_{n \to +\infty} |u_n| = +\infty$ and $x^{(n)}$ is the derived sequence of $x$ with respect to $u_n$.

Every sequence $x^{(n)}$ is uniformly recurrent and, by Theorem~\ref{thm:durand_caclculalbe}, we can compute some morphisms $\theta_n, \sigma_n, \psi_n$ such that $x^{(n)} = \psi_n(\sigma_n^\omega(1)) = \theta_n(x^{(n+1)})$.
Still by Theorem~\ref{thm:durand_caclculalbe}, there exists $1 \leq m<n$ bounded by a computable constant such that $(\sigma_m,\psi_m) = (\sigma_n,\psi_n)$, so that $x^{(m)} = x^{(n)}$.
The construction being deterministic, this implies that for all $k \geq m$, we have $x^{(k+m-n)} = x^{(k)}$, so that $\theta_{k+m-n} = \theta_k$.
In particular, setting $\lambda = \theta_0 \circ \cdots \circ \theta_{m-1}$ and $\theta = \theta_m \circ \cdots \circ \theta_{n-1}$, we have $x = \lambda(x^{(m)})$ and $x^{(m)} = \theta(x^{(m)})$. 
In particular, $\theta(1)$ starts with 1.
Furthermore, both $\lambda$ and $\theta$ are return morphisms and, using Theorem~\ref{T:S-adic representation of minimal}, $\theta$ is primitive, hence prolongable on $1$. 
We finally get $x = \lambda(\theta^\omega(1))$.
\end{proof}

The next result directly follows from Theorem~\ref{T:main}, Theorem~\ref{T:ult dendric iff derived dendric} and Theorem~\ref{thm:caract ev dendric}.

\begin{thm}
\label{thm:morphic dendric}
Let $x = \tau(\sigma^\omega(b))$ be an aperiodic uniformly recurrent morphic word and let $\lambda,\theta$ be the computable morphisms given by Theorem~\ref{thm:morphic with return morphisms}.
\begin{enumerate}
\item
The shift space $X_x$ is eventually dendric if and only if $\theta$ is dendric and the sequence $(\theta,\theta,\dots)$ labels an infinite path in $\cG^L(\{\theta\})$ and in $\cG^R(\{\theta\})$.
\item
The shift space $X_x$ is dendric if and only if $\lambda$ and $\theta$ are dendric and the sequence $(\lambda,\theta,\theta,\dots)$ labels an infinite path in $\cG^L(\{\lambda,\theta\})$ and in $\cG^R(\{\lambda,\theta\})$.
\end{enumerate}
\end{thm}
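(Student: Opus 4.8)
The plan is to reduce both items to the $\cS$-adic characterization of minimal dendric shifts (Theorem~\ref{T:main}) together with the derivation criterion for eventual dendricity (Theorem~\ref{T:ult dendric iff derived dendric}). First I would spell out how $\lambda$ and $\theta$ fit the $\cS$-adic construction of Section~\ref{subsection:S-adic representations}. The sequence $(\theta_n)_{n \geq 0}$ built in the proof of Theorem~\ref{thm:morphic with return morphisms} is, by construction, a directive sequence of the type of Theorem~\ref{T:S-adic representation of minimal}: it is a primitive $\cS$-adic representation of $X_x$ with return morphisms, and the level-$n$ shift it defines is $X_{x^{(n)}}$ for every $n$. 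As established there, it is eventually periodic: there exist $m \geq 1$ and $p \geq 1$ with $\theta_k = \theta_{k+p}$ and $x^{(k)} = x^{(k+p)}$ for all $k \geq m$. Writing $\lambda = \theta_{[0,m)}$ and $\theta = \theta_{[m,m+p)}$, one gets that $(\lambda, \theta, \theta, \dots)$ is a primitive $\cS$-adic representation of $X_x$ on $\cS = \{\lambda, \theta\}$ and that $(\theta, \theta, \dots)$ is a primitive $\cS$-adic representation of the derived shift $X_{x^{(m)}}$ on $\cS = \{\theta\}$; here primitivity is automatic since $\theta$ is a primitive substitution prolongable on $1$. Finally I would record the two identities $X_{x^{(m)}} = \theta \cdot X_{x^{(m)}}$ and $X_x = \lambda \cdot X_{x^{(m)}}$, which follow from $x^{(m)} = \theta^\omega(1)$ and $x = \lambda(x^{(m)})$ together with the fact that each level-$n$ shift is the image of the level-$(n{+}1)$ one under $\sigma_n$.

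For item~(2), I would apply Theorem~\ref{T:main} to $\cS = \{\lambda, \theta\}$. If $\lambda$ and $\theta$ are dendric return morphisms, this theorem says precisely that $X_x$ is minimal dendric if and only if the (already primitive) sequence $(\lambda, \theta, \theta, \dots)$ labels infinite paths in $\cG^L(\{\lambda, \theta\})$ and in $\cG^R(\{\lambda, \theta\})$, which gives the $(\Leftarrow)$ direction and half of $(\Rightarrow)$. It then suffices to check that dendricity of $X_x$ forces $\lambda$ and $\theta$ to be dendric: $X_x$ is minimal (as $x$ is uniformly recurrent), hence minimal dendric, so $X_{x^{(m)}}$ is minimal dendric by Theorem~\ref{T:decoding dendric}; applying Proposition~\ref{P:equiv dendric image of dendric shift} to $X_x = \lambda \cdot X_{x^{(m)}}$ and to $X_{x^{(m)}} = \theta \cdot X_{x^{(m)}}$ then yields that both $\lambda$ and $\theta$ are dendric. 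Hence the two sides of the stated equivalence coincide.

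For item~(1), the central step is Theorem~\ref{T:ult dendric iff derived dendric}: a minimal shift is eventually dendric if and only if it has a dendric derived shift. I would show this is equivalent to $X_{x^{(m)}}$ being dendric. The ``if'' direction is immediate, $X_{x^{(m)}}$ being a derived shift of $X_x$. For ``only if'', eventual dendricity of $X_x$ implies, again by Theorem~\ref{T:ult dendric iff derived dendric}, that every derived shift with respect to a long enough factor of $X_x$ is dendric; since the words $u_n$ with $X_{x^{(n)}} = \cD_{u_n}(X_x)$ satisfy $|u_n| \to \infty$ while $(x^{(n)})_n$ is eventually periodic, one may choose $n$ in the progression $m, m{+}p, m{+}2p, \dots$ with $|u_n|$ large, so that $X_{x^{(m)}} = X_{x^{(n)}}$ is dendric. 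It remains to identify when $X_{x^{(m)}}$ is dendric: running the argument of item~(2) on the shift $X_{x^{(m)}}$ with directive sequence $(\theta, \theta, \dots)$ and $\cS = \{\theta\}$, Theorem~\ref{T:main} gives, when $\theta$ is a dendric return morphism, that $X_{x^{(m)}}$ is dendric if and only if $(\theta, \theta, \dots)$ labels infinite paths in $\cG^L(\{\theta\})$ and $\cG^R(\{\theta\})$; when $\theta$ is not dendric, $X_{x^{(m)}} = \theta \cdot X_{x^{(m)}}$ fails to be dendric by Proposition~\ref{P:equiv dendric image of dendric shift}, so both sides of the claimed equivalence are false. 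Chaining these equivalences yields item~(1) (one may also quote Theorem~\ref{thm:caract ev dendric} to package the $(\Leftarrow)$ implication directly from the representation $(\lambda,\theta,\theta,\dots)$).

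The main difficulty I anticipate is organizational rather than mathematical: one has to carefully match the derived-\emph{sequence} data $x^{(n)}, u_n, \theta_n$ coming from Durand's results (Theorems~\ref{thm:durand_caclculalbe} and~\ref{thm:morphic with return morphisms}, Lemma~\ref{lemma:derived of derived}) with the derived-\emph{shift} data $X_n, w_n, \sigma_n$ of the $\cS$-adic construction, and verify the identifications $X_{x^{(n)}} = \cD_{w_{n-1}}(X_{x^{(n-1)}})$ and $X_{x^{(m)}} = \theta \cdot X_{x^{(m)}}$ at the level of shift spaces rather than merely of infinite words. Once this dictionary is in place, both statements follow immediately from the quoted theorems, with no new combinatorics on extension graphs.
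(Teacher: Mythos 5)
Your proposal is correct and follows essentially the same route as the paper, which dispatches this theorem in one line as a direct consequence of Theorem~\ref{T:main}, Theorem~\ref{T:ult dendric iff derived dendric} and Theorem~\ref{thm:caract ev dendric}; your write-up simply makes that reduction explicit (identifying $(\lambda,\theta,\theta,\dots)$ as a primitive return-morphism representation, using Proposition~\ref{P:equiv dendric image of dendric shift} to force dendricity of $\lambda$ and $\theta$ in the forward directions, and using the eventual periodicity of the derived sequences together with $|u_n|\to\infty$ to reduce eventual dendricity of $X_x$ to dendricity of $X_{x^{(m)}}$). The dictionary between derived sequences and derived shifts that you flag is indeed the only point requiring care, and you handle it correctly.
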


\begin{prop}\label{P:decidability of the threshold}
Let $x = \tau(\sigma^\omega(b))$ be an aperiodic uniformly recurrent morphic word. If $X_x$ is eventually dendric, then we can compute the graphs $G^L(X_x)$ and $G^R(X_x)$ and the threshold for which it is eventually dendric.
\end{prop}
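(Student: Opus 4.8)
The plan is to reduce everything to the computation of the graphs $G^L$ and $G^R$ of a primitive substitutive dendric shift, and then to use the substitutive structure to make the stabilisation of the level graphs $G^L_n, G^R_n$ effective. First I would apply Theorem~\ref{thm:morphic with return morphisms} to compute, from $(\sigma,\tau,b)$, return morphisms $\theta\colon\cC^*\to\cC^*$ and $\lambda\colon\cC^*\to\cA^*$ with $\theta$ primitive and $x=\lambda(\theta^\omega(1))$. Let $X'$ be the shift space generated by $\theta^\omega(1)$; it is a minimal primitive substitutive shift (hence with computable language), it satisfies $X'=\theta\cdot X'$, and $X_x=\lambda\cdot X'$. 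Since $X_x$ is eventually dendric, Theorem~\ref{thm:morphic dendric} gives that $\theta$ is dendric and that $(\theta,\theta,\dots)$ labels infinite paths in $\cG^L(\{\theta\})$ and $\cG^R(\{\theta\})$; hence $X'$ has a primitive $\{\theta\}$-adic representation by dendric return morphisms along such paths, so by Theorem~\ref{T:main} it is minimal dendric and $G^L(X'), G^R(X')$ are well defined (acyclic for the coloring and connected, by Corollary~\ref{C:equiv dendric graphs}).

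Granted a computation of $G^L(X')$, $G^R(X')$ and of an integer $N$ satisfying Proposition~\ref{P:equiv eventually dendric DP} for $X'$, everything else is routine. Proposition~\ref{P:image of eventually dendric with graphs} yields $G^L(X_x)=\lambda^L(G^L(X'))$ and $G^R(X_x)=\lambda^R(G^R(X'))$, both read off $\lambda$ and the two graphs. Proposition~\ref{P:image of eventually dendric} shows $X_x=\lambda\cdot X'$ is eventually dendric with threshold at most $B:=\max_a|\lambda(a)|\,(N+1)+|w|-1$, where $w$ is the (computable, maximal-length) word for which $\lambda$ is a return morphism; and since $\cL_{\leq B+1}(X_x)$ is computable from the morphic description of $x$, one computes the extension graph of every factor of length at most $B-1$, tests which are trees, and extracts the exact minimal threshold.

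It remains to compute $G^L(X')$, $G^R(X')$ and such an $N$, which is the core of the argument. Each colored multi-clique $G^L_n(X')=G(\{E^L_{X'}(v)\mid v\in\cL_n(X')\})$ (and likewise $G^R_n(X')$) is computable, and by Proposition~\ref{P:link eventually dendric with graphs} the sequences $(G^L_n(X'))_n$, $(G^R_n(X'))_n$ are eventually constant, equal to $G^L(X')$ and $G^R(X')$, the stabilisation point being an admissible $N$. To bound this point computably I would use that $X'=\theta\cdot X'$ with $\theta$ a return morphism for the letter $1$: every factor of $X'$ of length $>K:=\max_a|\theta(a)|$ contains an occurrence of $1$, hence has an antecedent under $\theta$ (Proposition~\ref{P:definition of antecedent}), and a left- (resp.\ right-)special factor has a left- (resp.\ right-)special antecedent. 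Iterating this desubstitution, and using recognizability of primitive substitutions, one shows that for $n$ beyond a threshold $n_0$ depending computably on $\theta$ (enough to desubstitute about $|\cC|$ times), the collection $\{E^L_{X'}(v)\mid v\in\cL_n(X')\}$ is obtained from the analogous collection at a strictly smaller length by the maps $\varphi^L_{\theta,s}$, $s\in\cT^L(\theta)$; this is the finite-length counterpart of the proof of Proposition~\ref{P:image of eventually dendric with graphs}. Since there are finitely many colored multi-cliques on $\cC$ and $(G^L_n(X'))_n$ only refines, it follows that once $G^L_{n+1}(X')=G^L_n(X')$ for some $n\ge n_0$ the sequence is constant from $n$ on; one therefore computes the $G^L_n(X')$ for $n\ge n_0$ until a repetition occurs, and symmetrically for $G^R$.

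\textbf{The main obstacle} is establishing this last point rigorously: the finite-level version of the extended-image analysis of Proposition~\ref{P:image of eventually dendric with graphs}, carefully tracking the boundary words of $\cT^L(\theta)$ and $\cT^R(\theta)$ and the relevant length ranges, so as to certify the stopping criterion and the bound $n_0$. (One could instead identify $G^L(X')$ with the graph of right asymptotic pairs of the primitive substitutive shift $X'$, a finite and effectively computable set; this bypasses the level-by-level computation but relies on a classical computability statement not proved in the present text.)
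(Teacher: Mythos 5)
Your reduction to the primitive substitutive shift $X'=X_y$ and the final steps (pushing $G^L(X')$, $G^R(X')$ through $\lambda$ via Proposition~\ref{P:image of eventually dendric with graphs}, bounding the threshold via Proposition~\ref{P:image of eventually dendric}, then checking the finitely many short factors) match the paper. The gap is exactly where you locate ``the main obstacle'', and it is not a technicality: your stopping criterion is unjustified. Monotonicity (``the sequence $(G^L_n(X'))_n$ only refines'') plus finiteness guarantees eventual stabilization but does \emph{not} make the stabilization point detectable: a clique $E^L_{X'}(v)$ can survive unchanged from length $n$ to $n+1$ (so $G^L_{n+1}(X')=G^L_n(X')$) and only split at length $n+2$, so two consecutive equal terms prove nothing. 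The deterministic recurrence you hope to establish --- that $\{E^L_{X'}(v)\mid v\in\cL_n(X')\}$ is the image under the maps $\varphi^L_{\theta,s}$ of the analogous collection at a smaller length, for all $n\ge n_0$ with computable $n_0$ --- is moreover circular: the equality $E^L_Y(u)=\varphi^L_{\theta,s}(E^L_X(v))$ in the proof of Proposition~\ref{P:image of eventually dendric with graphs} (the surjectivity half) is only obtained for lengths past an $N$ satisfying Proposition~\ref{P:equiv eventually dendric DP}, i.e., past the very stabilization point you are trying to compute; below that point one only gets the inclusion $E^L_Y(u)\subseteq\varphi^L_{\theta,s}(E^L_X(v))$.

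The paper's resolution is different and avoids the finite-level analysis entirely: introduce an Arnoux-Rauzy shift $Z$ on the alphabet of $\theta$ and compute the sequence $(G^L(\theta^n\cdot Z))_{n\ge 0}$. By Proposition~\ref{P:image of eventually dendric with graphs} this satisfies the exact recurrence $G^L(\theta^{n+1}\cdot Z)=\theta^L(G^L(\theta^n\cdot Z))$ --- each term is a fixed computable function of the previous one --- and it is a decreasing sequence of subgraphs starting from the complete graph, so the first repetition of consecutive terms certifies stabilization. One then shows the limit equals $G^L(X_y)$: every sufficiently long factor of $X_y$ occurs in $\theta^M\cdot Z$ with the same extension graph for $M$ large enough, and every factor of $\theta^M\cdot Z$ not in $X_y$ is an extended image of a factor of $Z$, hence ordinary by Remark~\ref{R:extended images when ordinary graph}, so the stable graphs coincide. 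If you want to keep your own route, you would need either this auxiliary-shift trick or the external computability of the asymptotic pairs of a primitive substitutive shift that you mention in parentheses; as written, the argument does not close.
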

\begin{proof}
Let $\lambda,\theta$ be the computable morphisms given by Theorem~\ref{thm:morphic with return morphisms} and let $y = \theta^\omega(1)$. Let us first show that we can compute the graphs $G^L(X_y)$ and $G^R(X_y)$. If $X_x$ is eventually dendric, then $X_y$ is dendric by Theorem~\ref{thm:morphic dendric}. By Proposition~\ref{P:link eventually dendric with graphs}, there exists $K$ such that $G^L(X_y) = G^L_K(X_y)$ and $G^R(X_y) = G^R_K(X_y)$, i.e., the factors of $\cL_{\geq K-1}(X_y)$ are ordinary. Moreover, there exists $M$ such that each length-$(K-1)$ factor is an extended image of a $\theta$-initial factor by some $\theta^m$ with $m < M$. Let $Z$ be an Arnoux-Rauzy shift space over the same alphabet as $X_y$. Then every $w \in \cL_{K-1}(X_y)$ is a factor of $\theta^M \cdot Z$ with the same extension graph.
Observe conversely that every factor of $\theta^M \cdot Z$ is either a factor of $X_y$ (with the same extension graph) or an extended image of a factor of $Z$ by $\theta^M$. In the second case, as every factor of $Z$ is ordinary, so are their extended images under $\theta^M$ by Remark~\ref{R:extended images when ordinary graph}. This shows
that any factor of $\theta^M \cdot Z$ that is not in $X_y$ is ordinary. In particular, the elements of $\cL_{\geq K-1}(\theta^M \cdot Z)$ are ordinary. We then conclude that $G^L(X_y) = G^L_K(X_y) = G^L_K(\theta^M \cdot Z) = G^L(\theta^M \cdot Z)$, and similarly that $G^R(X_y) = G^R(\theta^M \cdot Z)$. Moreover, this remains true for any $M' \geq M$.

Let us study the sequence $(G^L(\theta^n \cdot Z))_{n \geq 0}$. Since $G^L(Z)$ is the complete graph, $G^L(\theta \cdot Z)$ is one of its subgraph.
By Proposition~\ref{P:image of eventually dendric with graphs}, $G^L(\theta^{n+1} \cdot Z) = \theta^L(G^L(\theta^n \cdot Z))$ for all $n$. This then implies by induction that $G^L(\theta^{n+1} \cdot Z)$ is a subgraph of $G^L(\theta^n \cdot Z)$ for all $n$. Moreover, if they are equal, then $G^L(\theta^{n+k} \cdot Z) = G^L(\theta^n \cdot Z)$ for all $k \geq 0$.

This shows that, to compute $G^L(X_y)$, one simply needs to iteratively compute the graphs $G^L(\theta^n \cdot Z)$ using Proposition~\ref{P:image of eventually dendric with graphs} (and starting from the complete graph) until they stabilize. 
We can similarly compute the graph $G^R(X_y)$. By Proposition~\ref{P:image of eventually dendric with graphs} for the morphism $\lambda$, we then obtain $G^L(X_x)$ and $G^R(X_x)$.

We now show that we can compute the dendricity threshold of $X_x$. As $X_y$ is dendric, if $G^L_N(X_y) = G^L(X_y)$ and $G^R_N(X_y) = G^R(X_y)$, then any word of $\cL_{\geq N}(X_y)$ is ordinary. Knowing $G^L(X_y)$ and $G^R(X_y)$, we can find such $N$ by iteratively computing the graphs $G^L_n(X_y)$ and $G^R_n(X_y)$ and comparing them to $G^L(X_y)$ and $G^R(X_y)$. By Proposition~\ref{P:image of eventually dendric}, this gives an upper bound on the dendricity threshold of $X_x$. We can then compute its precise value by checking all small factors.
\end{proof}

Note that, using the notations of the proof, $y$ is dendric if and only if the sequence $(\theta, \theta, \dots)$ labels the paths $(G^L(\theta^n \cdot Z))_{n \geq 0}$ in $\cG^L(\{\theta\})$ and $(G^R(\theta^n \cdot Z))_{n \geq 0}$ in $\cG^R(\{\theta\})$. In other words, the study of the sequences $(G^L(\theta^n \cdot Z))_{n \geq 0}$ and $(G^R(\theta^n \cdot Z))_{n \geq 0}$ also leads to an alternative method to decide (eventual) dendricity.

\begin{cor}
Given two morphisms $\sigma,\tau$ such that $X(\sigma,\tau)$ is minimal, it is decidable whether $X(\sigma,\tau)$ is eventually dendric of threshold $N$.
\end{cor}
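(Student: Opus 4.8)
The plan is to chain together the effective ingredients of this section. First I would apply Lemma~\ref{lem:compute morphisms}: since $X(\sigma,\tau)$ is minimal, this produces a uniformly recurrent morphic word $x$ generating $X(\sigma,\tau)$, together with explicit morphisms $\sigma',\tau'$ and a letter $b'$ such that $x=\tau'(\sigma'^\omega(b'))$. Next I would decide whether $X(\sigma,\tau)$ is finite, equivalently whether $x$ is ultimately periodic (equivalently periodic, since $x$ is uniformly recurrent), which is decidable. If it is finite, then $X(\sigma,\tau)$ is the orbit of a periodic point of some computable period $p$, and for every length $\geq p$ each factor has a unique left and a unique right extension, so its extension graph is a single edge, hence a tree; thus $X(\sigma,\tau)$ is eventually dendric with threshold at most $p$, and the exact threshold is obtained by inspecting the extension graphs of the finitely many factors of length $\leq p$. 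Comparing with $N$ settles this case.

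In the remaining, aperiodic case I would apply Theorem~\ref{thm:morphic with return morphisms} to compute return morphisms $\lambda,\theta$, with $\theta$ primitive, such that $x=\lambda(\theta^\omega(1))$. By Theorem~\ref{thm:morphic dendric}(1), $X(\sigma,\tau)=X_x$ is eventually dendric if and only if $\theta$ is dendric and the constant sequence $(\theta,\theta,\dots)$ labels an infinite path in $\cG^L(\{\theta\})$ and in $\cG^R(\{\theta\})$. Each of these conditions is effective: whether $\theta$ is dendric is checked directly, since the $\theta$-initial factors and their extension graphs in $\cL(\theta)$ are computable; the graphs $\cG^L(\{\theta\})$ and $\cG^R(\{\theta\})$ are finite and computable, because their vertices are the finitely many trees on $\cA$ and whether $(T',\theta,T)$ is a valid triplet is decidable ($\cT^L(\theta)$, $\cT^R(\theta)$ and the partial maps $\varphi^L_{\theta,s}$, $\varphi^R_{\theta,p}$ are read off $\theta$, hence so are the left and right images $\theta^L(T)$, $\theta^R(T)$ and all the connectedness and acyclic-for-the-coloring requirements); and, as every edge of $\cG^L(\{\theta\})$ (resp.\ $\cG^R(\{\theta\})$) carries the label $\theta$, the constant sequence labels an infinite path there if and only if that finite graph contains a cycle, a plain reachability test. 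If either test fails, $X(\sigma,\tau)$ is not eventually dendric, hence not eventually dendric of threshold $N$, and the procedure rejects.

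If both tests succeed, $X(\sigma,\tau)$ is eventually dendric and I would then run the algorithm underlying Proposition~\ref{P:decidability of the threshold}: it computes $G^L$ and $G^R$ of the shift $X_y$ with $y=\theta^\omega(1)$ by iterating the operators $\theta^L$, $\theta^R$ starting from the complete graphs (those of an Arnoux--Rauzy shift on the alphabet of $X_y$) until they stabilize, transports them through $\lambda$ via Proposition~\ref{P:image of eventually dendric with graphs} to obtain $G^L(X_x)$ and $G^R(X_x)$, and then increases $n$ until $G^L_n$, $G^R_n$ reach these limits and all longer factors are ordinary; this yields an explicit upper bound on the dendricity threshold of $X_x=X(\sigma,\tau)$, after which the exact value is found by examining all factors below that bound. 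It remains only to compare the computed threshold with $N$.

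I expect the main subtlety to lie not in any single step but in making sure every ``compute'' above comes with an a priori termination bound. This is supplied upstream: Theorem~\ref{thm:durand_caclculalbe} bounds, by a computable constant, the number of distinct derived sequences that can occur along the tower used in Theorem~\ref{thm:morphic with return morphisms}, which is exactly what makes the stabilization $(\sigma_m,\psi_m)=(\sigma_n,\psi_n)$ detectable and hence $\lambda$ and $\theta$ genuinely computable. Granting this, the remaining steps are finite graph computations together with a bounded search through $\cL(X(\sigma,\tau))$, so the procedure halts; the one interface to verify carefully is that applying $\lambda^L$ (resp.\ $\lambda^R$) to $G^L(X_y)$ (resp.\ $G^R(X_y)$) really returns $G^L(X_x)$ (resp.\ $G^R(X_x)$), which is precisely Proposition~\ref{P:image of eventually dendric with graphs}.
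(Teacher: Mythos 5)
Your proposal is correct and follows essentially the same route as the paper: compute a generating uniformly recurrent morphic word via Lemma~\ref{lem:compute morphisms}, dispose of the periodic case by decidability of periodicity, and in the aperiodic case combine Theorem~\ref{thm:morphic with return morphisms}, Theorem~\ref{thm:morphic dendric} and Proposition~\ref{P:decidability of the threshold}. The extra detail you supply on why each step is effective (finiteness of $\cG^L(\{\theta\})$, cycle detection for the constant sequence, termination bounds from Theorem~\ref{thm:durand_caclculalbe}) is consistent with, and merely elaborates on, the paper's argument.
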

\begin{proof}
By Lemma~\ref{lem:compute morphisms}, we can compute morphisms $\sigma',\tau'$ such that $X(\sigma,\tau) = X_x$, with $x = \tau'(\sigma'^\omega(b))$.
It is decidable whether $x$ is periodic and, if so, one can compute the minimal period~\citep{Durand:HD0L}.
In that case, $X(\sigma,\tau)$ is eventually dendric and the dendricity threshold is bounded by the minimal period.
If $x$ not periodic, then we use Theorem~\ref{thm:morphic dendric} and Proposition~\ref{P:decidability of the threshold} to conclude.
\end{proof}

\section{Interval exchanges}
\label{S:interval exchanges}

An interval exchange transformation is a pair $(\{I_a \mid a \in \cA\}, T)$, where $\cA$ is an alphabet, $\{I_a \mid a \in \cA\}$ is a partition of $[0,1)$ into left-closed and right-open intervals and $T:[0,1) \to [0,1)$ is a bijection whose restriction to each interval $I_a$ is a translation.
Another way of defining an interval exchange transformation is by means of a pair $\binom{\leq_1}{\leq_2}$ of total orders on $\cA$.
An interval exchange transformation is said to be {\em regular} if the orbits of the non-zero left endpoints of the intervals $I_a$ are infinite and disjoint.
We refer to~\citep{Ferenczi_Zamboni:2008} for more precise definitions.

\subsection{Extension graphs in codings of IET}

With an interval exchange transformation is associated a shift space, called the {\em natural coding} of $T$ and defined as the set of infinite words $(x_n)_{n \in \Z}$ over $\cA$ such that for every factor $u = u_0 \cdots u_{n-1}$ of $x$, the set $\bigcap_{0 \leq i < n} T^{-i}(I_{u_i})$ is non-empty.
The definition using total orders allows to give the following characterization of natural codings of regular interval exchange transformations.

\begin{thm}[Ferenczi and Zamboni~\citep{Ferenczi_Zamboni:2008}]
\label{T:Fer-Zam}
A shift space $X$ over $\cA$ is the natural coding of a regular interval exchange transformation with the pair of orders $\binom{\leq_1}{\leq_2}$ if and only if it is minimal, $\cA \subseteq \cL(X)$ and it satisfies the following conditions for every $w \in \cL(X)$:
\begin{enumerate}
\item\label{item:IE non crossing edges}
	for all $(a_1, b_1), (a_2, b_2) \in E_X(w)$, if $a_1 <_2 a_2$, then $b_1 \leq_1 b_2$;
\item\label{item:IE intersection}
	for all $a_1, a_2 \in E^L_X(w)$, if $a_1, a_2$ are consecutive for $\leq_2$, then $E^R_X(a_1w) \cap E^R_X(a_2w)$ is a singleton.
\end{enumerate}
In particular, for every $w \in \cL(X)$, $E_X^L(w)$ is an interval for $\leq_2$ and $E^R_X(w)$ is an interval for $\leq_1$.
\end{thm}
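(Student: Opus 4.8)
The plan is to prove both implications by translating, for each $w \in \cL(X)$, the extension data $E_X(w)$ into the geometry of the cylinder interval $I_w = \bigcap_{0 \le i < |w|} T^{-i}(I_{w_i})$. For the direct implication I would start from a regular interval exchange $T$ given by $\binom{\leq_1}{\leq_2}$, so that $\leq_1$ lists the intervals $I_a$ from left to right and $\leq_2$ lists their images $T(I_a)$ from left to right, and recall the standard facts that $w \in \cL(X) \iff I_w \neq \emptyset$, that $\{I_w : w \in \cL_n(X)\}$ partitions $[0,1)$ into left-closed right-open intervals, and that $T^{|w|}$ restricted to $I_w$ is a translation, so $T^{|w|}(I_w)$ is again an interval. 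Minimality of $X$ follows from regularity (dense orbits) and $\cA \subseteq \cL(X)$ is immediate. The key identities are $aw \in \cL(X) \iff T(I_a) \cap I_w \neq \emptyset$ and $wb \in \cL(X) \iff T^{|w|}(I_w) \cap I_b \neq \emptyset$; since the $T(I_a)$ tile $[0,1)$ in $\leq_2$-order and the $I_b$ tile it in $\leq_1$-order while $I_w$ and $T^{|w|}(I_w)$ are intervals, $E^L_X(w)$ is a $\leq_2$-interval and $E^R_X(w)$ a $\leq_1$-interval, which is the ``in particular'' clause.

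For condition~\eqref{item:IE non crossing edges}, write $awb \in \cL(X) \iff T^{|w|}\bigl(T(I_a)\cap I_w\bigr)\cap I_b \neq \emptyset$; as $a$ grows for $\leq_2$, the subinterval $T(I_a)\cap I_w$ of $I_w$ slides monotonically rightwards, hence so does its $T^{|w|}$-image inside $T^{|w|}(I_w)$, so the $\leq_1$-rank of any $I_b$ it meets cannot decrease -- this is the non-crossing statement. For condition~\eqref{item:IE intersection}, if $a_1,a_2$ are $\leq_2$-consecutive and both lie in $E^L_X(w)$, then $T(I_{a_1})\cap I_w$ and $T(I_{a_2})\cap I_w$ are adjacent subintervals of $I_w$ meeting in a single point $\gamma$ (the left endpoint of $T(I_{a_2})$); applying the translation $T^{|w|}$, they become adjacent subintervals of $T^{|w|}(I_w)$ meeting in the single point $q = T^{|w|}(\gamma)$, so $E^R_X(a_1w)\cap E^R_X(a_2w)$ equals the set of $b$ with $q$ in the interior of $I_b$, which is a singleton -- unless $q$ is itself a left endpoint of some $I_b$, which would be a ``connection'' between endpoints of the exchange, excluded by regularity. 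Making this last step watertight, i.e.\ matching ``regular'' with ``no such connection'', is the delicate point of this direction.

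For the converse, assume $X$ is minimal, $\cA \subseteq \cL(X)$, and conditions~\eqref{item:IE non crossing edges}--\eqref{item:IE intersection} hold (so $E^L_X(w)$ and $E^R_X(w)$ are also intervals). The degenerate case of finite $X$ aside, fix a shift-invariant probability measure $\mu$ on $X$ and write $[w]$ for the cylinder of $w$. Define $T$ by placing, from left to right, the intervals $I_a$ of length $\mu([a])$ in $\leq_1$-order and the intervals $K_a$ of length $\mu([a])$ in $\leq_2$-order, and letting $T$ be the translation carrying $I_a$ onto $K_a$; this is an interval exchange with orders $\binom{\leq_1}{\leq_2}$. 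The core of the argument is to prove, by induction on $n$, that $w \mapsto I_w$ restricts to a bijection from $\cL_n(X)$ onto the set of non-empty level-$n$ cylinders of $T$, with $|I_w| = \mu([w])$; since $\sum_{w \in \cL_n(X)}\mu([w]) = 1$, this forces the level-$n$ cylinders of $T$ to be exactly the $I_w$, $w \in \cL_n(X)$, so the natural coding of $T$ has language $\cL(X)$. The inductive step passes from level $n$ to $n+1$ through the decompositions $I_w = \bigsqcup_b I_{wb}$ and $T(I_{aw}) = T(I_a)\cap I_w$, matched against $[w] = \bigsqcup_{b \in E^R_X(w)}[wb] = \bigsqcup_{a \in E^L_X(w)}[aw]$: condition~\eqref{item:IE non crossing edges} makes the new intervals refine the old ones in an order-preserving way (no crossing), and condition~\eqref{item:IE intersection} makes consecutive left extensions share exactly the predicted right extension, so no length is lost or double-counted. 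Imposing condition~\eqref{item:IE intersection} for every $w$ is moreover what pins down regularity of $T$, a connection being exactly what would make that intersection empty for a suitable $w$.

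The main obstacle is this induction in the converse: conditions~\eqref{item:IE non crossing edges}--\eqref{item:IE intersection} a priori control only a single extension step, and one must show they suffice to keep the entire nested family $(I_w)_{w\in\cL(X)}$ coherent as $n \to \infty$ and that $\mu$ supplies consistent lengths throughout -- in other words, that the combinatorial ``planarity'' encoded by the two orders together with the singleton condition forces the cylinders to assemble into a genuine interval exchange. Establishing cleanly the equivalence between the singleton condition~\eqref{item:IE intersection} (for all $w$) and regularity of $T$ is the secondary point requiring care.
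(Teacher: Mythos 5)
The paper does not prove this statement: it is imported verbatim from Ferenczi--Zamboni~\cite{Ferenczi_Zamboni:2008} as a black box, so there is no internal proof to compare against. Judged on its own, your forward direction is essentially the standard geometric argument and is sound: the identities $aw \in \cL(X) \Leftrightarrow T(I_a)\cap I_w \neq \emptyset$ and $wb \in \cL(X) \Leftrightarrow T^{|w|}(I_w)\cap I_b \neq \emptyset$, together with the fact that $T^{|w|}$ is a translation on $I_w$, do give the interval property and the non-crossing condition~\eqref{item:IE non crossing edges}. Two small points to tidy there: the sets $T(I_{a_1})\cap I_w$ and $T(I_{a_2})\cap I_w$ are disjoint half-open intervals, so they do not ``meet in a single point'' --- the correct statement is that their closures share the point $\gamma = T(\ell_{a_2})$, and $b \in E^R(a_1w)\cap E^R(a_2w)$ iff $q = T^{|w|}(\gamma)$ lies in $I_b$ with $q \neq \ell_b$; and the regularity hypothesis only constrains orbits of \emph{non-zero} left endpoints, so the case $\ell_{a_2}=0$ needs a separate word. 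You correctly identify the ``$q$ is a left endpoint'' obstruction as the connection/idoc issue, but you leave it unresolved, and it is exactly the content of condition~\eqref{item:IE intersection}.

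The genuine gap is in the converse. Your plan --- fix an invariant measure $\mu$, build $T$ with $|I_a|=\mu([a])$ in the prescribed orders, and prove by induction on $n$ that $w \mapsto I_w$ is a length-preserving bijection from $\cL_n(X)$ onto the non-empty level-$n$ cylinders of $T$ --- is the right program, but the inductive step is only asserted. The entire difficulty of the theorem sits there: one must show that condition~\eqref{item:IE non crossing edges} forces the combinatorial order of the children $[wb]$, $b \in E^R_X(w)$, to agree with the geometric order of the subintervals $T^{|w|}(I_w)\cap I_b$, and that condition~\eqref{item:IE intersection} forces the measures to add up so that $|I_{wb}| = \mu([wb])$ with no gap or overlap at the junctions between consecutive left extensions; without carrying this out, nothing rules out $I_w = \emptyset$ for some $w \in \cL(X)$, or a non-empty cylinder of $T$ whose coding word is not in $\cL(X)$. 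You also need, and do not state, that $\mu([w])>0$ for every $w \in \cL(X)$ (which follows from minimality), and the deduction that the resulting $T$ is \emph{regular} (rather than merely an IET whose coding is $X$) is again deferred. So the proposal is a credible outline of the Ferenczi--Zamboni argument rather than a proof: the forward direction is essentially complete modulo the endpoint bookkeeping, while the converse is missing its central induction.
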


This result can be reformulated as follows.
Let $\leq^L$ and $\leq^R$ be two total orders on $\cA$. 
A factor $w$ of a shift space $X \subseteq A^\Z$ is said to be \emph{planar} for $(\leq^L, \leq^R)$ if item~\ref{item:IE non crossing edges} is satisfied for $\binom{\leq^R}{\leq^L}$.
This implies that, placing the left and right vertices of $\cE_X(w)$ on parallel lines and ordering them respectively by $\leq^L$ and $\leq^R$, the edges of $\cE_X(w)$ may be drawn as straight noncrossing segments, resulting in a planar graph. In particular, $\cE_X(w)$ is acyclic.

If $w$ is planar for $(\leq^L, \leq^R)$, item~\ref{item:IE intersection} is then equivalent to the fact that $\cE_X(w)$ is a tree. In particular, this proves that codings of regular interval exchanges are dendric shift spaces~\citep{bifix_IET}.

A shift space over $\cA$ is said to be \emph{planar} for $(\leq^L, \leq^R)$ if every $w \in \cL(X)$ is planar for $(\leq^L, \leq^R)$.
Theorem~\ref{T:Fer-Zam} then states that a shift space is the natural coding of a regular interval exchange transformation for the orders $\binom{\leq^R}{\leq^L}$ if and only if it is a minimal dendric shift space planar for $(\leq^L, \leq^R)$. Note that, in this case, it is known that the pair $(\leq^L, \leq^R)$ is \emph{irreducible}, i.e. for any $0 < k < \Card(\cA)$, the $k$ $\leq^L$-smallest letters are not the $k$ $\leq^R$-smallest letters.

The next result is classical when dealing with interval exchange transformations.
We give a combinatorial proof.
\begin{prop}
\label{P:IET ultimement 2 extensions}
Let $X$ be a minimal dendric shift over $\cA$ which is planar for $(\leq^L,\leq^R)$. 
\begin{enumerate}
\item
Any long enough left special factor $w$ is such that $E^L_X(w)$ is equal to $\{a,b\}$, where $a,b$ are consecutive for $\leq^L$. 
Furthermore, for every two $\leq^L$-consecutive letters $a,b$ and for all $n \in \N$, there is a (unique) left special factor $w$ of length $n$ such that $\{a,b\} \subseteq E_X^L(w)$. 
\item
The same holds on the right, i.e., any long enough right special factor $w$ is such that $E^R_X(w)$ is equal to $\{a,b\}$, where $a,b$ are consecutive for $\leq^R$. 
Furthermore, for every two $\leq^R$-consecutive letters $a,b$ and for all $n \in \N$, there is a (unique) right special factor $w$ of length $n$ such that $\{a,b\} \subseteq E_X^R(w)$. 
\end{enumerate} 
\end{prop}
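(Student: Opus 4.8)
The plan is to prove item~(1); item~(2) will then follow by the left--right symmetric argument (exchanging $E^L$ with $E^R$, $G^L_n$ with $G^R_n$, and $\leq^L$ with $\leq^R$). Write $d=\#\cA$ and enumerate $\cA$ as $a_1<^L\cdots<^La_d$. I would start from two facts. First, by the reformulation of Theorem~\ref{T:Fer-Zam}, $X$ is the natural coding of a regular interval exchange transformation, so $E^L_X(u)$ is an interval for $\leq^L$ and $E^R_X(u)$ is an interval for $\leq^R$ for every $u\in\cL(X)$. Second, $X$ being dendric it has complexity $(d-1)n+1$, so Proposition~\ref{P:complexity} gives $\sum_{v\in\cL_n(X)}(\#E^L_X(v)-1)=d-1$ for every $n$, and Corollary~\ref{C:equiv dendric graphs} says each $G^L_n(X)$ is acyclic for the labeling.

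Next I would prove the ``furthermore'' clause, for every $n$. The key point is that two \emph{distinct} left special factors $v\neq v'$ of length $n$ cannot both have a given pair of $\leq^L$-consecutive letters in their left extension sets: otherwise $G^L_n(X)$ would contain two parallel edges with distinct labels $v$ and $v'$, i.e.\ a simple cycle of length two using two labels, contradicting acyclicity for the labeling. On the other hand, since $E^L_X(v)$ is a $\leq^L$-interval it contains exactly $\#E^L_X(v)-1$ pairs of $\leq^L$-consecutive letters, and summing over $v$ yields $d-1$, which is precisely the number of pairs of $\leq^L$-consecutive letters of $\cA$. Hence every such pair lies in $E^L_X(v)$ for exactly one left special $v\in\cL_n(X)$, which is the claimed existence and uniqueness. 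As a by-product, for each $n$ the sets $E^L_X(v)$ ($v$ left special of length $n$) are $\leq^L$-intervals pairwise meeting in at most one point and ``tiling'' $\{a_1,\dots,a_d\}$, i.e.\ forming a chain of intervals $[a_1,a_{m_1}],[a_{m_1},a_{m_2}],\dots,[a_{m_{t-1}},a_d]$.

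It then remains to show that for $n$ large this chain only uses pairs of $\leq^L$-consecutive letters. By eventual dendricity (Propositions~\ref{P:equiv eventually dendric DP} and~\ref{P:link eventually dendric with graphs}) the family $\{E^L_X(v)\mid v\in\cL_n(X)\text{ left special}\}$ is constant for $n\geq N$; suppose for contradiction that it contains a block $I=\{a_i<^L\cdots<^La_j\}$ with $j\geq i+2$. Using the left version of Proposition~\ref{P:equiv eventually dendric DP} together with the uniqueness just proved, the (unique) left special factor of each length $\geq N$ with left extension set $I$ extends uniquely to the right while keeping $E^L$ equal to $I$, which yields a right-infinite word $x$ in the one-sided shift of $X$ with $E^L_X(x)=I$ and such that $x_{[0,n)}$ is that factor for $n\geq N$. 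Fix $n\geq N$ and put $c=x_n$. Then $a_m\,x_{[0,n)}\,c\in\cL(X)$ for every $m\in\{i,\dots,j\}$, so $c\in E^R_X(a_m\,x_{[0,n)})$ for all these $m$; these are $\leq^R$-intervals, and by planarity (for $a_m<^La_{m+1}$ every right extension of $a_m\,x_{[0,n)}$ is $\leq^R$ every right extension of $a_{m+1}\,x_{[0,n)}$) they are linearly ordered along $m$, so the fact that $c$ lies in all of them forces each intermediate one to shrink to $\{c\}$. In particular, writing $a^{\circ}=a_{i+1}$, one gets $E^R_X(a^{\circ}\,x_{[0,n)})=\{x_n\}$ for every $n\geq N$.

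The hard part is to turn this into a contradiction. The idea is a rigidity argument: $a^{\circ}\,x_{[0,N+1)}$ lies in $\cL(X)$, hence, $X$ being minimal, occurs in $x$, say at position $p$, so $x_p=a^{\circ}$ and $x_{p+1+m}=x_m$ for $0\leq m\leq N$; since every $a^{\circ}\,x_{[0,m)}$ with $m\geq N$ has unique right extension $x_m$, an induction on $m$ upgrades this to $x_{p+1+m}=x_m$ for all $m\geq 0$, so $x$ is purely periodic of some period $q$. But then, by uniform recurrence, every factor of $X$ occurs in $x=x_{[0,q)}^{\omega}$, whence $\#\cL_n(X)\leq q$ for all $n$, contradicting $\#\cL_n(X)=(d-1)n+1$. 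Consequently every block of the chain has two elements, so for $n$ large every left special factor of length $n$ has exactly two left extensions, which are $\leq^L$-consecutive since they form a block of the chain; together with the ``furthermore'' clause this gives~(1). I expect this last paragraph to be the real obstacle: the graph/counting steps are routine, but forcing the chain to become the finest one genuinely needs planarity (to collapse $E^R_X(a^{\circ}\,x_{[0,n)})$ to a singleton) combined with aperiodicity of $X$.
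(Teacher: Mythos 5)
Your proof is correct, and its engine for the first claim of item~(1) is essentially the same as the paper's: isolate a $\leq^L$-middle letter of a left-extension set with at least three elements and show that the corresponding word has a forced (unique) right extension at every length, which is incompatible with minimality and aperiodicity. The two write-ups diverge in how the steps are carried out. For the forced extension, the paper shows that $bw$ (with $a<^Lb<^Lc$ in $E^L_X(w)$) is not right special by combining the uniqueness of special extensions from Proposition~\ref{P:equiv eventually dendric DP} (``$wa'$ and $wc'$ are both left special, hence $a'=c'$'') with the non-crossing condition; you instead get it purely from planarity, observing that the sets $E^R_X(a_m\,x_{[0,n)})$ are linearly ordered along $\leq^L$ and all contain $x_n$, which collapses the intermediate ones to singletons --- a slightly cleaner route. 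You also make the final contradiction explicit (forced periodicity of $x$ versus $p_X(n)=(d-1)n+1$), where the paper simply asserts that the absence of right special words in $bw\cA^*$ contradicts minimality and dendricity. For the ``furthermore'' clause the approaches genuinely differ: the paper runs an induction on $n$ using item~(2) of Theorem~\ref{T:Fer-Zam} (the singleton intersection $E^R_X(au)\cap E^R_X(bu)$ for $\leq^L$-consecutive $a,b$), whereas you use a global counting argument --- $\sum_v(\Card(E^L_X(v))-1)=d-1$ by Proposition~\ref{P:complexity}, each $E^L_X(v)$ being a $\leq^L$-interval contributes $\Card(E^L_X(v))-1$ consecutive pairs, and acyclicity for the labeling of $G^L_n(X)$ (Corollary~\ref{C:equiv dendric graphs}) forbids a pair from being shared by two distinct labels. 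Both are valid; your version leans on the graphs $G^L_n(X)$ introduced earlier in the paper and avoids invoking item~(2) of Theorem~\ref{T:Fer-Zam} for that clause, at the cost of a somewhat longer argument for the first claim.
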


\begin{proof}
We prove the first item.
Since $X$ is dendric, there exists $N$ satisfying the conditions of Proposition~\ref{P:equiv eventually dendric DP}. Let $w \in \cL_{\geq N}(X)$ be a left special factor.
By Theorem~\ref{T:Fer-Zam}, the letters of $E_X^L(w)$ are consecutive for $\leq^L$ and it suffices to prove that $\Card(E^L_X(w)) = 2$. Assume that there exist $a,b,c \in E^L_X(w)$ such that $a <^L b <^L c$.

We claim that $bw$ is not right special. Otherwise, by definition of $N$, $E^R_X(bw) = E^R_X(w)$ and, as there exist $a', c'$ such that $E^R_X(aw) = \{a'\}$ and $E^R_X(cw) = \{c'\}$, $wa'$ and $wc'$ are left special. However, this contradicts the definition of $N$ thus $a' = c'$. Using item~\ref{item:IE non crossing edges} of Theorem~\ref{T:Fer-Zam}, this implies that $E^R_X(bw) = \{a'\}$, a contradiction.

Thus $bw$ is not right special and there exists a unique letter $b'$ such that $bwb' \in \cL(X)$. By definition of $N$, $E^L(wb') = E^L_X(w)$ and we can iterate the reasoning to show that no word of $bw\cA^* \cap \cL(X)$ is right special. This is a contradiction as $X$ is minimal dendric.

We now prove the second part of item 1 by induction on $n$. It is true for $n = 0$ and if $u$ is the unique word of $\cL_n(X)$ such that $a, b \in E^L_X(u)$ then, by item~\ref{item:IE intersection} of Theorem~\ref{T:Fer-Zam}, there exists $c$ such that $E^R_X(au) \cap E^R_X(bu) = \{c\}$. Thus, $uc$ is the unique word of $\cL_{n+1}(X)$ having $a$ and $b$ as left extensions.
\end{proof}

\subsection{S-adic characterization of regular interval exchange transformations}

From Theorem~\ref{T:Fer-Zam}, we know that the natural coding of a regular interval exchange transformation is a minimal dendric shift.
By Theorem~\ref{T:main}, any of its $\cS$-adic representation made of return morphisms labels a path in $\cG^L(\cS)$ and in $\cG^R(\cS)$.
In this section, we characterize those labeled paths.

\begin{defi}
A \emph{line graph} on $\cA$ is a graph $G$ such that, if $\cA = \{a_1, \dots, a_n\}$, the edges are exactly the pairs $(a_i, a_{i+1})$, $i < n$. This graph is associated with the orders
\[
	a_1 < a_2 < \dots < a_n \quad \text{and} \quad a_n \prec a_{n-1} \prec \dots \prec a_1.
\]
We then define $G(\leq)$ and $G(\preceq)$ as the graph $G$.
\end{defi}

The following result is a direct consequence of Proposition~\ref{P:IET ultimement 2 extensions}. Its converse is false, as seen in Example~\ref{Ex:not dendric with acyclic graphs}.

\begin{cor}\label{C:interval exchanges have line graphs}
If $X$ is a minimal dendric shift planar for the orders $(\leq^L, \leq^R)$, then $G^L(X) = G(\leq^L)$ and $G^R(X) = G(\leq^R)$.
\end{cor}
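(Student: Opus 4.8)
The plan is to read the corollary off directly from Proposition~\ref{P:IET ultimement 2 extensions} together with Proposition~\ref{P:link eventually dendric with graphs}. First I would fix $N \in \N$ large enough to serve three purposes simultaneously: $N$ realizes the stabilization of Proposition~\ref{P:link eventually dendric with graphs}, so that $G^L(X) = G(\{E^L_X(v) \mid v \in \cL_N(X)\})$ and $G^R(X) = G(\{E^R_X(v) \mid v \in \cL_N(X)\})$ (this is legitimate since $X$ is dendric, hence eventually dendric); and $N$ exceeds the thresholds hidden in both items of Proposition~\ref{P:IET ultimement 2 extensions}. Write $\cA = \{a_1, \dots, a_d\}$ with $a_1 <^L a_2 <^L \cdots <^L a_d$.

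Next I would describe the family $\{E^L_X(v) \mid v \in \cL_N(X)\}$ explicitly. By the first item of Proposition~\ref{P:IET ultimement 2 extensions}, every left special factor $w$ of length $N$ has $E^L_X(w) = \{a_i, a_{i+1}\}$ for some $i < d$; conversely, for each $i < d$ the second part of that item furnishes a left special factor $w$ of length $N$ with $\{a_i, a_{i+1}\} \subseteq E^L_X(w)$, and since $E^L_X(w)$ is itself a $\leq^L$-consecutive pair it must equal $\{a_i, a_{i+1}\}$; moreover this $w$ is unique. Discarding the sets of cardinality at most $1$, which contribute nothing to a multi-clique, the family $\{E^L_X(v) \mid v \in \cL_N(X)\}$ thus consists exactly of $\{a_1,a_2\}, \{a_2,a_3\}, \dots, \{a_{d-1},a_d\}$, each occurring once. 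Hence $G^L(X) = G(\{\{a_i, a_{i+1}\} \mid i < d\})$, and since every clique here has size two this colored multi-clique is the simple line graph on $\cA$ ordered by $\leq^L$, i.e. the graph $G(\leq^L)$ (identifying the colored graph with its uncolored version, as in Section~\ref{S:stabilization for eventually dendric}). The argument for $G^R(X) = G(\leq^R)$ is word-for-word the same, using the second item of Proposition~\ref{P:IET ultimement 2 extensions} and the order $\leq^R$.

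I do not expect a genuine obstacle: the corollary is a bookkeeping consequence of Proposition~\ref{P:IET ultimement 2 extensions}. The only points requiring a little care are choosing $N$ large enough to satisfy all the invoked thresholds at once, and verifying that each $\leq^L$-consecutive (resp.\ $\leq^R$-consecutive) pair is realized \emph{exactly} once as an extension set at length $N$, so that the resulting multi-clique has no repeated edge and is genuinely the line graph.
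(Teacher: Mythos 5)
Your proof is correct and follows exactly the route the paper intends: the paper simply declares the corollary a direct consequence of Proposition~\ref{P:IET ultimement 2 extensions}, and your argument fills in precisely the bookkeeping (stabilization via Proposition~\ref{P:link eventually dendric with graphs}, plus existence and uniqueness of the left special factor realizing each $\leq^L$-consecutive pair) that this entails.
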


If $\preceq$ and $\leq$ are two total orders on $\cA$, a partial map $\varphi : \cA \to \cA$ is \emph{order preserving} from $\preceq$ to $\leq$ if, for all $x, y \in \dom(\varphi)$, we have
\[
	x \prec y \Rightarrow \varphi(x) \leq \varphi(y).
\]

A return morphism $\sigma : \cA^* \to \cA^*$ is \emph{left order preserving} from $\preceq$ to $\leq$ if, for all $s \in \cT^L(\sigma)$, $\varphi^L_{\sigma, s}$ is order preserving from $\preceq$ to $\leq$.
Similarly, $\sigma$ is \emph{right order preserving} from $\preceq$ to $\leq$ if, for all $p \in \cT^R(\sigma)$, $\varphi^R_{\sigma, p}$ is order preserving from $\preceq$ to $\leq$.

\begin{lem}\label{L:unique order for order preserving}
For every return morphism $\sigma : \cA^* \to \cA^*$ and every total order $\leq$ on $\cA$, there exists a unique total order $\preceq$ on $\cA$ such that $\sigma$ is left (resp., right) order preserving from $\preceq$ to $\leq$.
\end{lem}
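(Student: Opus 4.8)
The plan is to construct $\preceq$ explicitly from the combinatorics of the code $\sigma(\cA)$ and then verify the three things needed: that this order works, that it is the \emph{only} order that works, and that it is genuinely a total order. I treat the left case; the right case is symmetric. Since $\sigma$ is a return morphism, $\sigma(\cA)$ is a suffix code, so for distinct $a,b \in \cA$ the word $s := s_\sigma(a,b)$ is a \emph{proper} suffix of both $\sigma(a)$ and $\sigma(b)$; hence $a,b \in \cA^L_{\sigma,s}$, the letters $\varphi^L_{\sigma,s}(a)$ and $\varphi^L_{\sigma,s}(b)$ are defined, and they are distinct by maximality of $s$. I therefore \emph{define} $a \prec b$ to mean $\varphi^L_{\sigma,s_\sigma(a,b)}(a) < \varphi^L_{\sigma,s_\sigma(a,b)}(b)$. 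Equivalently, reading each $\sigma(a)$ from right to left turns $\sigma(\cA)$ into a prefix code, and $\preceq$ is exactly the lexicographic order on these reversed words induced by $\leq$; in particular $\preceq$ is a total order, transitivity being the standard transitivity of a lexicographic order (alternatively one checks it directly from the ultrametric inequality $|s_\sigma(a,c)| \geq \min(|s_\sigma(a,b)|,|s_\sigma(b,c)|)$ by a short case analysis on which two of the three lengths are smallest).

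Next I show $\sigma$ is left order preserving from $\preceq$ to $\leq$. Fix $s \in \cT^L(\sigma)$ and $x,y \in \cA^L_{\sigma,s}$ with $x \prec y$; I must check $\varphi^L_{\sigma,s}(x) \leq \varphi^L_{\sigma,s}(y)$. If $\varphi^L_{\sigma,s}(x) = \varphi^L_{\sigma,s}(y)$ there is nothing to prove, so assume they differ. Since $x,y \in \cA^L_{\sigma,s}$, the word $s$ is a common suffix of $\sigma(x)$ and $\sigma(y)$, so $s$ is a suffix of $s_\sigma(x,y)$; and if $s_\sigma(x,y)$ were strictly longer than $s$, the letters of $\sigma(x)$ and $\sigma(y)$ immediately preceding $s$ would coincide, contradicting $\varphi^L_{\sigma,s}(x) \neq \varphi^L_{\sigma,s}(y)$. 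Hence $s_\sigma(x,y) = s$, and by the defining property of $\prec$ applied to the pair $x,y$ we get $\varphi^L_{\sigma,s}(x) < \varphi^L_{\sigma,s}(y)$, as wanted.

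For uniqueness, let $\preceq'$ be any total order for which $\sigma$ is left order preserving from $\preceq'$ to $\leq$. Given distinct $a,b \in \cA$, set $s = s_\sigma(a,b)$; then $a,b \in \cA^L_{\sigma,s}$ and $\varphi^L_{\sigma,s}(a) \neq \varphi^L_{\sigma,s}(b)$. If $a \prec' b$, order preservation forces $\varphi^L_{\sigma,s}(a) \leq \varphi^L_{\sigma,s}(b)$, hence $\varphi^L_{\sigma,s}(a) < \varphi^L_{\sigma,s}(b)$, i.e. $a \prec b$; symmetrically $b \prec' a$ implies $b \prec a$. So $\preceq'$ agrees with $\preceq$ on every pair and $\preceq' = \preceq$. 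The right case is handled identically after replacing longest common suffixes of the $\sigma(a)$ by longest common prefixes of the words $\sigma(a)w$: the only extra point is that $\{\sigma(a)w \mid a \in \cA\}$ is a prefix code, which holds because $\sigma$ is a return morphism for $w$ — if $\sigma(a)w$ were a proper prefix of $\sigma(b)w$ with $a \neq b$, then $w$ would occur in $\sigma(b)w$ at position $|\sigma(a)| \notin \{0,|\sigma(b)|\}$, contradicting that $\sigma(b)w$ contains exactly two occurrences of $w$ (one as a prefix, one as a suffix).

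The only genuinely nontrivial point above is that the relation $\preceq$ is transitive; I expect to dispose of it via the lexicographic reformulation in the first paragraph, so that transitivity becomes a standard fact about lexicographic orders on prefix codes. Everything else is routine bookkeeping about prefixes and suffixes of the code words $\sigma(a)$ (resp. $\sigma(a)w$).
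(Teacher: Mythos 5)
Your proof is correct, and it constructs the same order as the paper does --- namely the ``reverse lexicographic'' order induced by $\leq$ on the suffix code $\sigma(\cA)$ --- but it reaches it by a different route. The paper builds $\preceq$ recursively on the suffix trie of $\sigma(\cA)$: for each suffix $s$ it defines an order $\preceq_s$ on $\cB_s=\{a \mid \sigma(a)\in\cA^*s\}$ from the orders $\preceq_{as}$ at the children, and the order-preservation of all $\varphi^L_{\sigma,s'}$ with $s'\in\cA^*s$ comes out of the induction for free (as does totality, so transitivity never has to be checked separately). You instead give a closed-form definition, $a\prec b$ iff $\varphi^L_{\sigma,s_\sigma(a,b)}(a)<\varphi^L_{\sigma,s_\sigma(a,b)}(b)$, and then verify the axioms by hand; the two nontrivial points you must supply --- transitivity (which you correctly reduce to lexicographic order on a prefix code, legitimate because the reversals of a suffix code form a prefix code) and the observation that $s=s_\sigma(x,y)$ whenever $x,y\in\cA^L_{\sigma,s}$ have distinct images under $\varphi^L_{\sigma,s}$ --- are both handled correctly. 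Your uniqueness argument is essentially identical to the paper's. A small bonus of your write-up is that you make explicit why the right-hand case works, by checking that $\{\sigma(a)w \mid a\in\cA\}$ is a prefix code using the two-occurrence property of return morphisms; the paper leaves the right case to symmetry without spelling this out.
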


\begin{proof}
We prove the result for left order preserving. Let us begin with the existence of such an order $\preceq$. For all $s \in \Suff(\sigma(\cA))$, we will build an order $\preceq_{s}$ on $\cB_s = \{a \in \cA \mid \sigma(a) \in \cA^*s\}$\footnote{Note the difference with $\cA^L_{\sigma, s} = \{a \in \cA \mid \sigma(a) \in \cA^+s\}$.} such that, for all $s' \in \cA^*s$, the map $\varphi^L_{\sigma, s'}$ is order preserving from $\preceq_s$ to $\leq$. The conclusion will follow with $s = \eps$.

We proceed by induction on the length of $s$, starting with $|s|$ maximal. If $s$ is maximal, i.e. for all $a \in \cA$, $\cB_{as}$ is empty, then $s \in \sigma(\cA)$ and $\cB_s$ contains a unique element thus $\preceq_s$ is a trivial order.

Assume now that $s$ is not of maximal length and that we have the orders $\preceq_{as}$ for all $a \in \varphi^L_{\sigma, s}(\cA)$. Since $\sigma(\cA)$ is a suffix code, the sets $\cB_{as}$ form a partition of $\cB_s$ thus, we can define the order $\preceq_s$ on $\cB_s$ by $x \prec_s y$ if
\begin{enumerate}
\item\label{item:order preserving first case}
	$x, y \in \cB_{as}$ and $x \prec_{as} y$, or
\item\label{item:order preserving second case}
	$x \in \cB_{as}$, $y \in \cB_{bs}$ and $a < b$.
\end{enumerate}
For all $s' \in \cA^*as$, $\varphi^L_{\sigma, s'}$ is order preserving from $\preceq_{as}$ to $\leq$ thus it is order preserving from $\preceq_s$ to $\leq$.
In addition, if $x, y \in \cB_s$ are as in case~\ref{item:order preserving first case}, then
\[
	\varphi^L_{\sigma, s}(x) = a = \varphi^L_{\sigma, s}(y)
\]
and if they are as in case~\ref{item:order preserving second case}, then
\[
	\varphi^L_{\sigma, s}(x) = a < b = \varphi^L_{\sigma, s}(y).
\]
Thus, $\varphi^L_{\sigma, s}$ is also order preserving from $\preceq_s$ to $\leq$.

We now prove the uniqueness. Assume that $\sigma$ is left order preserving from $\preceq$ to $\leq$ and from $\preceq'$ to $\leq$ and let $x, y \in \cA$ be such that $x \prec y$ and $y \prec' x$. If $s = s_\sigma(x, y)$, then $\varphi^L_{\sigma, s}(x) \ne \varphi^L_{\sigma, s}(y)$. Since $\sigma$ is left order preserving from $\preceq$ to $\leq$, we have $\varphi^L_{\sigma, s}(x) < \varphi^L_{\sigma, s}(y)$. Since $\sigma$ is also left order preserving from $\preceq'$ to $\leq$, we have the converse inequality, which is a contradiction.
\end{proof}

\begin{ex}
Let us consider the morphism $\beta : 0 \mapsto 0, 1 \mapsto 01, 2 \mapsto 02, 3 \mapsto 032$ of Example~\ref{Ex:image of graph} and let us build the order $\preceq$ such that $\beta$ is left order preserving from $\preceq$ to $\leq$ where $3 < 0 < 2 < 1$. Using the notations of the previous proof, we have $3 \prec_2 2$ since $3 < 0$. We then have $0 \prec_\eps 3 \prec_\eps 2 \prec_\eps 1$ since $0 < 2 < 1$, and $\preceq$ is given by the order $\preceq_\eps$.
\end{ex}

We say that a return morphism $\sigma$ is \emph{planar preserving} from $(\preceq^L, \preceq^R)$ to $(\leq^L, \leq^R)$ if it is left order preserving from $\preceq^L$ to $\leq^L$ and right order preserving from $\preceq^R$ to $\leq^R$.
And it is $(\leq^L, \leq^R)$\emph{-planar} if all of the $\sigma$-initial factors are planar for $(\leq^L, \leq^R)$.
Note that this definition is completely independent on the choice of the word $w$ for which $\sigma$ codes the return words.

The terminology planar preserving comes from the following results.

\begin{prop}
\label{P:planar morphisms}
Let $X$ be a shift space and $\sigma$ be a return morphism planar preserving from $(\preceq^L, \preceq^R)$ to $(\leq^L, \leq^R)$. A word $v \in \cL(X)$ is planar for $(\preceq^L, \preceq^R)$ if and only if every extended image of $v$ under $\sigma$ is planar for $(\leq^L, \leq^R)$.
\end{prop}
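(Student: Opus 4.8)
The plan is to reduce the statement about planarity of extended images to the combinatorial description of extension graphs in morphic images given by Proposition~\ref{P:image of graphs by phi}, exactly as was done for dendricity in Proposition~\ref{P:extended images are dendric}. Recall that a word $w$ is planar for a pair of orders precisely when its extension graph, drawn with left vertices ordered along one line and right vertices along the other, has no crossing edges; equivalently, for all bi-extensions $(a_1,b_1),(a_2,b_2)$, $a_1 \prec^L a_2$ implies $b_1 \preceq^R b_2$ (with the convention that $a_1=a_2$ forces nothing and $b_1=b_2$ is allowed). So the whole statement is about the ordering of edges, and the $\sigma$-initial factors are handled separately by the hypothesis that $\sigma$ is — wait, actually in this proposition $\sigma$ is only assumed planar preserving, not $(\leq^L,\leq^R)$-planar, so the $\sigma$-initial factors play no role here: every extended image has an antecedent $v$ and conversely.

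First I would fix $v \in \cL(X)$ and let $u = s\sigma(v)p$ be an extended image, with $(s,v,p)$ the triplet of Proposition~\ref{P:definition of antecedent}. By Proposition~\ref{P:image of graphs by phi}, $\cE_Y(u)$ is the image of the subgraph $\cE_{X,s,p}(v) \subset \cE_X(v)$ under $\varphi^L_{\sigma,s}$ on the left and $\varphi^R_{\sigma,p}$ on the right. For the forward direction, assume $v$ is planar for $(\preceq^L,\preceq^R)$. Take two bi-extensions $(a_1',b_1'),(a_2',b_2') \in E_Y(u)$ with $a_1' <^L a_2'$; lift them via Equation~\eqref{Eq: link between extensions} to $(a_1,b_1),(a_2,b_2) \in E_{X,s,p}(v)$ with $\varphi^L_{\sigma,s}(a_i)=a_i'$, $\varphi^R_{\sigma,p}(b_i)=b_i'$. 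Since $\varphi^L_{\sigma,s}$ is order preserving from $\preceq^L$ to $\leq^L$ and $a_1' \neq a_2'$, we must have $a_1 \prec^L a_2$ (the $\preceq^L$-order of $a_1,a_2$ cannot be reversed or equal, else their images would coincide). Planarity of $v$ gives $b_1 \preceq^R b_2$, and applying the order-preserving map $\varphi^R_{\sigma,p}$ yields $b_1' \leq^R b_2'$. Hence $u$ is planar. For the converse, I would argue contrapositively: if $v$ is not planar, there are $(a_1,b_1),(a_2,b_2) \in E_X(v)$ with $a_1 \prec^L a_2$ but $b_2 \prec^R b_1$. I need these to lie in $E_{X,s,p}(v)$ for the relevant $(s,p)$ — here is where I must be careful about which $(s,p)$ to pick, and this is the main obstacle (see below). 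Granting that, their images give a crossing in $\cE_Y(u)$ for the corresponding extended image $u$, so that extended image is not planar.

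The main obstacle is the converse direction: a non-planar pair of edges of $\cE_X(v)$ must be witnessed inside a single piece $\cE_{X,s,p}(v)$, i.e. one must exhibit $s \in \cT^L(\sigma)$ and $p \in w\cB^*$ (or $\eps$) such that $a_1,a_2 \in \cA^L_{\sigma,s}$ and $b_1,b_2 \in \cA^R_{\sigma,p}$, and such that the triplet-uniqueness of Proposition~\ref{P:definition of antecedent} picks out an extended image with this antecedent. The clean way is: set $s = s_\sigma(a_1,a_2) \in \cT^L(\sigma)$, the longest common suffix, and $p = p_\sigma(b_1,b_2) \in \cT^R(\sigma)$; then by definition $a_1,a_2 \in \cA^L_{\sigma,s}$ and $b_1,b_2 \in \cA^R_{\sigma,p}$, so $(a_i,b_i) \in E_{X,s,p}(v)$, and $u = s'\sigma(v)p'$ for suitable $s',p'$ refining $s,p$ is an extended image of $v$ whose extension graph contains the crossing $\varphi^L_{\sigma,s}(a_1) <^L \varphi^L_{\sigma,s}(a_2)$, $\varphi^R_{\sigma,p}(b_2) <^R \varphi^R_{\sigma,p}(b_1)$ — using that $\varphi^L_{\sigma,s}$ strictly orders $a_1,a_2$ (they differ, since $s$ is the maximal common suffix) and likewise on the right, together with order preservation in the reverse form. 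One should double-check that such an extended image $u$ with this particular $(s,v,p)$ actually occurs in $\cL(Y)$: since $awb \in \cL(X)$ for the chosen $(a,b) = (a_i,b_i)$, the word $\sigma(a)\sigma(v)\sigma(b)w$ lies in $\cL(Y)$ and contains the required extended image as a factor, so we are fine. Finally I would remark, as in Proposition~\ref{P:extended images are dendric} and Remark~\ref{R:extended images when ordinary graph}, that this is purely about the order of the vertices and does not interact with connectedness, so no dendricity hypothesis on $v$ or $X$ is needed.
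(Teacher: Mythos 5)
Your proposal is correct and follows essentially the same route as the paper's proof: both reduce planarity to the absence of crossing edges, use Proposition~\ref{P:image of graphs by phi} to transport pairs of bi-extensions, and handle the converse by choosing $s = s_\sigma(a_1,a_2)$ and $p = p_\sigma(b_1,b_2)$ so that the crossing pair survives with distinct images in the extended image $s\sigma(v)p$. The paper merely packages the two directions into a single equivalence (a pair of edges of $\cE_X(v)$ crosses if and only if the corresponding pair in the corresponding extended image crosses), whereas you argue the two implications separately; the content is the same.
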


\begin{proof}
We prove that the graph $\cE_X(v)$ has two crossing edges for $(\preceq^L, \preceq^R)$ if and only if there exists an extended image $u$ such that $\cE_{\sigma \cdot X}(u)$ has two crossing edges for $(\leq^L, \leq^R)$.
Let $(x_1, y_1)$ and $(x_2, y_2)$ be two bi-extensions of $v$ such that $x_1 \ne x_2$ and $y_1 \ne y_2$. If $s = s_\sigma(x_1, x_2)$ and $p = p_\sigma(y_1, y_2)$, we denote
\[
	x'_1 = \varphi_{\sigma, s}^L(x_1), \quad
	x'_2 = \varphi_{\sigma, s}^L(x_2), \quad
	y'_1 = \varphi_{\sigma, p}^R(y_1) 
	\quad \text{and} \quad 
	y'_2 = \varphi_{\sigma, p}^R(y_2).
\]
By construction, $(x'_1, y'_1)$ and $(x'_2, y'_2)$ are two bi-extensions of the extended image $u := s\sigma(v)p$ and are such that $x'_1 \ne x'_2$ and $y'_1 \ne y'_2$.
Remark that, for any such pair of bi-extensions of an extended image $u'$ of $v$, it is possible to find a corresponding pair of bi-extensions of $v$.

To conclude, it suffices to prove that $(x_1, y_1)$ and $(x_2, y_2)$ are crossing edges in $\cE_X(v)$ if and only if $(x'_1, y'_1)$ and $(x'_2, y'_2)$ are crossing edges in $\cE_{\sigma \cdot X}(u)$.
Assume that $x_1 \prec^L x_2$. Since $\sigma$ is planar preserving, $\varphi^L_{\sigma, s}$ is order preserving from $\preceq^L$ to $\leq^L$ and we have $x'_1 <^L x'_2$. Moreover, $\varphi^R_{\sigma, p}$ is order preserving from $\preceq^R$ to $\leq^R$ thus
\[
    y_1 \prec^R y_2 \Leftrightarrow y'_1 <^R y'_2.
\]
This ends the proof.
\end{proof}

\begin{cor}\label{C:planar preserving}
Let $X$ and $Y$ be two dendric shifts such that $Y = \sigma \cdot X$ where $\sigma$ is $(\leq^L, \leq^R)$-planar and planar preserving from $(\preceq^L, \preceq^R)$ to $(\leq^L, \leq^R)$. The shift $X$ is planar for $(\preceq^L, \preceq^R)$ if and only if $Y$ is planar for $(\leq^L, \leq^R)$.
\end{cor}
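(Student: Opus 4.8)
The plan is to combine Proposition~\ref{P:planar morphisms} with the structural description of $\cL(Y)$ recalled after Proposition~\ref{P:definition of antecedent}, namely that every factor of $Y$ is either a $\sigma$-initial factor or an extended image of a (unique) factor of $X$.

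First I would treat the forward implication. Assume $X$ is planar for $(\preceq^L,\preceq^R)$ and let $u\in\cL(Y)$. If $u$ is $\sigma$-initial, then it is planar for $(\leq^L,\leq^R)$ precisely because $\sigma$ is $(\leq^L,\leq^R)$-planar. Otherwise $u$ is an extended image of some $v\in\cL(X)$; since $v$ is planar for $(\preceq^L,\preceq^R)$, the ``only if'' direction of Proposition~\ref{P:planar morphisms} gives that $u$ is planar for $(\leq^L,\leq^R)$. As every factor of $Y$ falls into one of these two cases, $Y$ is planar for $(\leq^L,\leq^R)$.

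For the converse I would argue directly on factors of $X$: assume $Y$ is planar for $(\leq^L,\leq^R)$ and take any $v\in\cL(X)$. Every extended image of $v$ is a factor of $Y$, hence planar for $(\leq^L,\leq^R)$, so the ``if'' direction of Proposition~\ref{P:planar morphisms} yields that $v$ is planar for $(\preceq^L,\preceq^R)$. Since $v$ was arbitrary, $X$ is planar for $(\preceq^L,\preceq^R)$.

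The only point requiring a word of justification — and the sole potential obstacle — is that the ``if'' direction of Proposition~\ref{P:planar morphisms} be non-vacuously applicable, i.e. that every $v\in\cL(X)$ admits at least one extended image in $\cL(Y)$. This is implicit in the construction of $Y=\sigma\cdot X$: choosing $avb\in\cL(X)$ and an occurrence of $\sigma(a)\sigma(v)\sigma(b)$ inside some $\sigma(x)$ with $x\in X$, one extracts a factor of the form $s\sigma(v)p$ with $s\in\cB^*$, $p\in w\cB^*$ and $E_{X,s,p}(v)\neq\emptyset$, which by definition is an extended image of $v$. Beyond this observation the argument is pure bookkeeping, and dendricity of $X$ and $Y$ is not actually needed.
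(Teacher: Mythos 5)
Your proof is correct and follows exactly the route the paper intends: the corollary is stated without proof as an immediate consequence of Proposition~\ref{P:planar morphisms} together with the fact (from Proposition~\ref{P:definition of antecedent}) that every factor of $Y$ is either $\sigma$-initial or an extended image of a factor of $X$. Your closing worry about vacuity is already absorbed into Proposition~\ref{P:planar morphisms} itself, whose proof shows that a non-planar $v$ necessarily produces a concrete non-planar extended image $s_\sigma(x_1,x_2)\,\sigma(v)\,p_\sigma(y_1,y_2)$, and your observation that dendricity of $X$ and $Y$ is not actually used is accurate.
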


We can now prove a result similar to Theorem~\ref{T:main} but in the case of interval exchanges.
This time, the characterization only uses one graph. Observe that, if $\leq^*$ denotes the reverse order of $\leq$, then the planar shift spaces (or codings of interval exchange transformations) for $(\leq^L, \leq^R)$ and for $((\leq^L)^*,(\leq^R)^*)$ coincide. Therefore, to avoid duplicates in the graph, we fix $a <^L b$ for some $a,b \in \cA$.

\begin{defi}\label{D:graph interval exchanges}
Let $a,b$ be two distinct letters in $\cA$.
Let $\cS$ be a set of dendric return morphisms from $\cA^*$ to $\cA^*$. The graph $\cG_{IET}(\cS)$ is such that its vertices are the irreducible pairs of orders $(\leq^L,\leq^R)$ such that $a <^L b$, and there is an edge from $(\leq^L,\leq^R)$ to $(\preceq^L,\preceq^R)$ labeled by $\sigma \in \cS$ if 
\begin{enumerate}
\item
	$(G(\leq^L), \sigma, G(\preceq^L))$ is an edge of $\cG^L(\cS)$;
\item
	$(G(\leq^R), \sigma, G(\preceq^R))$ is an edge of $\cG^R(\cS)$;
\item
	$\sigma$ is $(\leq^L, \leq^R)$-planar;
\item
	$\sigma$ is planar preserving from $(\preceq^L, \preceq^R)$ to $(\leq^L, \leq^R)$, or from $((\preceq^L)^*, (\preceq^R)^*)$ to $(\leq^L, \leq^R)$.
\end{enumerate}
\end{defi}

By Lemma~\ref{L:unique order for order preserving}, the graph $\cG_{IET}$ is deterministic in the sense that, for each vertex and each morphism, there is at most one edge labeled by the morphism leaving the vertex.

\begin{thm}
\label{T:iet in G}
    Let $\cS$ be a family of dendric return morphisms from $\cA^*$ to $\cA^*$ and let $X$ be a shift space having an $\cS$-adic representation $\bsigma = (\sigma_n)_{n \geq 0}$. Then $X$ is the coding of a regular interval exchange if and only if $\bsigma$ is primitive and labels an infinite path in the graph $\cG_{IET}(\cS)$. Moreover, if this path starts in the pair $(\leq^L, \leq^R)$, then $X$ is the coding of a regular interval exchange for the orders $\binom{\leq^R}{\leq^L}$.
\end{thm}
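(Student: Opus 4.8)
The plan is to mimic the proof of Theorem~\ref{T:main}, threading the planarity data through both directions. For the forward direction, suppose $X$ is the natural coding of a regular interval exchange transformation for the orders $\binom{\leq^R}{\leq^L}$. By Theorem~\ref{T:Fer-Zam}, $X$ is a minimal dendric shift planar for the irreducible pair $(\leq^L, \leq^R)$, so by Theorem~\ref{T:main} the sequence $\bsigma$ is primitive and labels infinite paths in $\cG^L(\cS)$ and $\cG^R(\cS)$. It remains to upgrade this to an infinite path in $\cG_{IET}(\cS)$. As in the proof of Theorem~\ref{T:main}, for each $n$ the shift $X_\bsigma^{(n)}$ is again minimal dendric, hence (being the image of $X_\bsigma^{(n+1)}$ under a dendric return morphism and itself planar, being a coding of a regular IET by Theorem~\ref{T:ult dendric iff derived dendric}-type stability — more directly, by induction using Corollary~\ref{C:planar preserving}) each $X_\bsigma^{(n)}$ is planar for some irreducible pair $(\leq^L_n, \leq^R_n)$ with, say, $a <^L_n b$ after possibly reversing. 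By Corollary~\ref{C:interval exchanges have line graphs}, $G^L(X_\bsigma^{(n)}) = G(\leq^L_n)$ and $G^R(X_\bsigma^{(n)}) = G(\leq^R_n)$ are line graphs, so conditions (1) and (2) of Definition~\ref{D:graph interval exchanges} hold at each step by Proposition~\ref{P:equiv dendric image of dendric shift with graphs} and Proposition~\ref{P:image of eventually dendric with graphs}. Since each $\sigma_n$ is a $\sigma_n$-initial-factor of some extended image computation inside the planar shift $X_\bsigma^{(n)}$, it is $(\leq^L_n, \leq^R_n)$-planar; and Corollary~\ref{C:planar preserving} forces $\sigma_n$ to be planar preserving from $(\leq^L_{n+1}, \leq^R_{n+1})$ (or its reverse) to $(\leq^L_n, \leq^R_n)$, giving conditions (3) and (4). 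Thus $\bsigma$ labels the infinite path $(\leq^L_n, \leq^R_n)_{n \geq 0}$ in $\cG_{IET}(\cS)$.

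For the converse, assume $\bsigma$ is primitive and labels an infinite path $(\leq^L_n, \leq^R_n)_{n \geq 0}$ in $\cG_{IET}(\cS)$. Conditions (1) and (2) say this path projects to infinite paths in $\cG^L(\cS)$ and $\cG^R(\cS)$, so by Theorem~\ref{T:main}, $X = X_\bsigma$ is minimal dendric. It remains to show $X$ is planar for $(\leq^L_0, \leq^R_0)$; the IET statement then follows from Theorem~\ref{T:Fer-Zam} and the irreducibility of the pair (which is built into the vertex set). As in the proof of Theorem~\ref{T:main}, take any factor $u \in \cL(X)$: either $u$ is $\sigma_0$-initial, in which case condition (3) ($\sigma_0$ is $(\leq^L_0, \leq^R_0)$-planar) makes it planar directly, or $u$ is an extended image of a $\sigma_k$-initial factor $v \in \cL(X_\bsigma^{(k)})$ under $\sigma_{[0,k)}$ for some $k>0$. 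Pick an Arnoux-Rauzy shift $Y$ over the appropriate alphabet and set $Z = \sigma_k \cdot Y$; since $v$ is $\sigma_k$-initial, $\cE_Z(v) = \cE_{X_\bsigma^{(k)}}(v)$, and $v$ is planar for $(\leq^L_k, \leq^R_k)$ because $\sigma_k$ is $(\leq^L_k, \leq^R_k)$-planar. Now apply Proposition~\ref{P:planar morphisms} (together with Lemma~\ref{L:graph containing valid triplet} to keep dendricity, so that Corollary~\ref{C:planar preserving} applies) successively to $\sigma_{k-1}, \dots, \sigma_0$: condition (4) at each step guarantees each $\sigma_j$ is planar preserving from $(\leq^L_{j+1}, \leq^R_{j+1})$ or its reverse to $(\leq^L_j, \leq^R_j)$, and since planarity is insensitive to simultaneously reversing both orders, the extended image of $v$ under $\sigma_{[0,k)}$ — namely a word containing $u$ as, or having the same relevant bi-extensions as, $u$ — is planar for $(\leq^L_0, \leq^R_0)$. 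Hence $u$ is planar, and $X$ is planar for $(\leq^L_0, \leq^R_0)$.

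The main obstacle I expect is bookkeeping the reversal ambiguity in condition (4): the order pair can flip between consecutive levels, so one must verify that planarity of an extended image is genuinely invariant under reversing $(\preceq^L,\preceq^R)\mapsto((\preceq^L)^*,(\preceq^R)^*)$ — this is exactly the remark preceding Definition~\ref{D:graph interval exchanges} and needs to be invoked carefully when composing many morphisms. A secondary technical point is making precise the relationship between "$u$ is an extended image of $v$" and the planar graph of $u$: strictly, Proposition~\ref{P:planar morphisms} speaks of extended images and their bi-extensions, so one should phrase the induction in terms of "every extended image of $v$ under $\sigma_{[0,k)}$ is planar" and then note $u$ is such an extended image (or a factor whose extension graph is a subgraph of one, which suffices since subgraphs of planar graphs are planar). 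Finally, one should double-check that the irreducibility condition defining the vertices of $\cG_{IET}(\cS)$ is automatically consistent along the path — but this is guaranteed since a minimal dendric planar shift always has an irreducible order pair, as recalled after Theorem~\ref{T:Fer-Zam}.
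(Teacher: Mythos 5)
Your overall strategy is the paper's: forward direction by descending through the levels $X_\bsigma^{(n)}$ carrying a pair of orders, converse by lifting planarity of $\sigma_k$-initial factors through Proposition~\ref{P:planar morphisms}. The converse direction as you wrote it is essentially correct (the detour through an Arnoux--Rauzy shift $Z=\sigma_k\cdot Y$ is unnecessary here, since Theorem~\ref{T:main} already gives that every $X_\bsigma^{(n)}$ is minimal dendric and one can work with $v\in\cL(X_\bsigma^{(k)})$ directly; and your two ``obstacles'' --- reversal-invariance of planarity and phrasing the induction in terms of extended images --- are exactly the points the paper handles, the first by the remark preceding Definition~\ref{D:graph interval exchanges} and the second by the statement of Proposition~\ref{P:planar morphisms} itself).

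The forward direction, however, has one step stated backwards, and as written it does not go through. You assert that each $X_\bsigma^{(n)}$ is planar for some pair ``by induction using Corollary~\ref{C:planar preserving}'' and that this corollary ``forces $\sigma_n$ to be planar preserving from $(\leq^L_{n+1},\leq^R_{n+1})$ (or its reverse) to $(\leq^L_n,\leq^R_n)$.'' Corollary~\ref{C:planar preserving} cannot force anything of the sort: planar preservation is a \emph{hypothesis} of that corollary, not a conclusion, and nothing in your argument produces the orders at level $n+1$. The missing ingredient is Lemma~\ref{L:unique order for order preserving}: given $(\leq^L_n,\leq^R_n)$, it supplies the unique pair $(\leq^L_{n+1},\leq^R_{n+1})$ (normalized so that $a<^L_{n+1}b$, possibly after reversing both orders) for which $\sigma_n$ is planar preserving --- this is what gives condition (4) of Definition~\ref{D:graph interval exchanges} --- and only \emph{then} does Corollary~\ref{C:planar preserving} apply to transfer planarity from $X_\bsigma^{(n)}$ to $X_\bsigma^{(n+1)}$, whence $X_\bsigma^{(n+1)}$ is a coding of a regular IET by Theorem~\ref{T:Fer-Zam} and the new pair is irreducible, i.e.\ a legitimate vertex of $\cG_{IET}(\cS)$. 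With that lemma inserted at the right place, your argument matches the paper's; conditions (1)--(2) then follow from Corollary~\ref{C:interval exchanges have line graphs} as you say, and condition (3) from the fact that the $\sigma_n$-initial factors lie in the planar language $\cL(X_\bsigma^{(n)})$.
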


\begin{proof}
Let us assume that $X$ is the coding of a regular interval exchange for the orders $\binom{\leq^R_0}{\leq^L_0}$, we can moreover assume that $a <^L_0 b$, where $a$ and $b$ are the letters used to define $\cG_{IET}(\cS)$. Then, $\sigma_0$ is $(\leq^L_0, \leq^R_0)$-planar, and by Lemma~\ref{L:unique order for order preserving}, there exist two orders $\leq^L_1$ and $\leq^R_1$ such that $a <^L_1 b$ and $\sigma$ is planar preserving from $(\leq^L_1, \leq^R_1)$ to $(\leq^L_0, \leq^R_0)$ or from $((\leq^L_1)^*, (\leq^R_1)^*)$ to $(\leq^L_0, \leq^R_0)$.

As $X_\bsigma^{(1)}$ is dendric, by Corollary~\ref{C:planar preserving}, it is the coding of a regular interval exchange for the orders $\binom{\leq^R_1}{\leq^L_1}$. In particular, the pair $(\leq^L_1,\leq^R_1)$ is irreducible. Moreover, $\sigma_0$ labels an edge from $G(\leq^L_0)$ to $G(\leq^L_1)$ in $\cG^L(\cS)$ and from $G(\leq^R_0)$ to $G(\leq^R_1)$ in $\cG^R(\cS)$. Therefore, $\sigma_0$ labels an edge from $(\leq^L_0, \leq^R_0)$ to $(\leq^L_1,\leq^R_1)$ in $\cG_{IET}(\cS)$. We then iterate this reasoning to show that $\bsigma$ labels an infinite path in $\cG_{IET}(\cS)$.

Assume now that $\bsigma$ is primitive and labels a path $((\leq^L_n, \leq^R_n))_{n \geq 0}$ in $\cG_{IET}(\cS)$. Since $\bsigma$ also labels an infinite path in $\cG^L(\cS)$ and $\cG^R(\cS)$, $X$ is minimal dendric. Let us prove that it is planar for $(\leq^L_0, \leq^R_0)$.
The $\sigma_0$-initial factors of $X$ are planar for $(\leq^L_0, \leq^R_0)$ since $\sigma_0$ is $(\leq^L_0, \leq^R_0)$-planar.
Let $u \in \cL(X)$ be a non-$\sigma_0$-initial factor of $X$. There exist a unique $k > 0$ and a unique $v \in \cL(X_\bsigma^{(k)})$ $\sigma_k$-initial such that $u$ is an extended image of $v$ by $\sigma_0 \dots \sigma_{k-1}$ by Proposition~\ref{P:definition of antecedent}. However, $\sigma_k$ is $(\leq^L_k, \leq^R_k)$-planar thus $v$ is planar for these orders. Iterating Proposition~\ref{P:planar morphisms} $k$ times allows us to conclude that $u$ is planar for $(\leq^L_0, \leq^R_0)$. As this is true for any $u \in \cL(X)$, $X$ is the coding of a regular interval exchange for the orders $\binom{\leq^R_0}{\leq^L_0}$.
\end{proof}

As in the dendric case, we can reduce the number of vertices of this graph using permutation. As a first step, we can completely fix the right (or top) order. However we can reduce this graph even more. For example, on the alphabet $\{0,1,2,3\}$, the irreducible pairs $\binom{0<1<2<3}{1<3<2<0}$ and $\binom{0<1<2<3}{3<1<0<2}$ are equivalent up to permutation even if they have the same top order. Indeed, $\binom{0<1<2<3}{3<1<0<2}$ is equivalent to $\binom{3<2<1<0}{2<0<1<3}$ which becomes $\binom{0<1<2<3}{1<3<2<0}$ with a simple permutation of the letters. In particular, this then gives a graph with $2$ vertices on a ternary alphabet, as done in~\citep{Gheeraert_Lejeune_Leroy:2021}, and with $9$ vertices for an alphabet of size 4.

\section{Future work}

Given a set $\cS$ of dendric return morphisms, Theorem~\ref{T:main} characterizes the $\cS$-adic shifts that are minimal and dendric.
We thus directly obtain the following corollary.

\begin{cor}
Let $\cS$ be a set of dendric return morphisms defined on some alphabet $\cA$ such that any minimal dendric shift over $\cA$ admits an $\cS$-adic representation.
Then a shift space $X \subseteq \cA^\mathbb{Z}$ is minimal and dendric if and only if it has a primitive $\cS$-adic representation labeling paths in the graphs $\cG^L(\cS)$ and $\cG^R(\cS)$.  
\end{cor}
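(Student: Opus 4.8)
The plan is to obtain the corollary directly from Theorem~\ref{T:main} by bookkeeping hypotheses; no new argument is needed. The key fact to exploit is that $\cS$ is, by assumption, a family of dendric return morphisms, so Theorem~\ref{T:main} applies verbatim to any shift space admitting an $\cS$-adic representation.

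First I would handle the backward implication. Assume $X \subseteq \cA^{\Z}$ has a primitive $\cS$-adic representation $\bsigma = (\sigma_n)_{n\geq 0}$ labeling infinite paths in $\cG^L(\cS)$ and in $\cG^R(\cS)$. Then $X = X_{\bsigma}$ satisfies the hypotheses of Theorem~\ref{T:main} (it has an $\cS$-adic representation with $\cS$ a family of dendric return morphisms) together with the primitivity and path-labeling conditions, so that theorem immediately yields that $X$ is minimal and dendric.

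Next I would handle the forward implication. Assume $X$ is minimal and dendric. By the standing hypothesis on $\cS$, $X$ admits \emph{some} $\cS$-adic representation $\bsigma$. Now apply the ``only if'' direction of Theorem~\ref{T:main} to this particular $\bsigma$: since $\cS$ consists of dendric return morphisms and $X$ is minimal dendric, $\bsigma$ is forced to be primitive and to label infinite paths in both $\cG^L(\cS)$ and $\cG^R(\cS)$. Hence $X$ has a primitive $\cS$-adic representation labeling paths in the two graphs, as required.

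The only step deserving care, and the reason the corollary is not purely formal, is the forward implication: it relies on the fact that \emph{every} $\cS$-adic representation of a minimal dendric shift over $\cA$ is automatically primitive and path-labeling, which is precisely the force of the equivalence in Theorem~\ref{T:main} applied to the a priori arbitrary representation furnished by the hypothesis on $\cS$. I do not expect any genuine obstacle here: all the substantive work lies in Theorem~\ref{T:main}, and the corollary merely specializes it to those sets $\cS$ that are rich enough to represent every minimal dendric shift over $\cA$.
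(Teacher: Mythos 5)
Your argument is correct and is exactly what the paper intends: the corollary is stated as a direct consequence of Theorem~\ref{T:main}, with the backward implication being the ``if'' direction applied to the given representation and the forward implication combining the representability hypothesis on $\cS$ with the ``only if'' direction. No gap here; all the substance is indeed in Theorem~\ref{T:main}.
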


This corollary is however not completely satisfactory.
Indeed it is in general difficult to find such a set $\cS$ of dendric return morphisms.
Recall that Theorem~\ref{T:decoding dendric} ensures that all morphisms in $\cS$ can be assume to be tame, i.e., are compositions of permutations and of the elementary morphisms
\[
	R_{ab}:
        \begin{cases}
		a \mapsto ab, \\
		c \mapsto c, & \text{for } c \neq a,
	\end{cases}
\qquad \text{and} \qquad
	L_{ab}:
        \begin{cases}
		a \mapsto ba, \\
		c \mapsto c, & \text{for } c \neq a.
	\end{cases}
\]

Over the alphabet $\cA = \{0,1,2\}$, a sufficient~\citep{Gheeraert_Lejeune_Leroy:2021} set of morphisms is given by the permutations and by 
\[
    \begin{array}{lll}
    \alpha : 
    \begin{cases}
        0 \mapsto 0 \\
        1 \mapsto 01 \\
        2 \mapsto 02
    \end{cases}
    & \beta : 
    \begin{cases}
        0 \mapsto 0 \\
        1 \mapsto 01 \\
        2 \mapsto 021
    \end{cases}
    & \gamma : 
    \begin{cases}
        0 \mapsto 0 \\
        1 \mapsto 01 \\
        2 \mapsto 012
    \end{cases}
    \\
    \delta_n : 
    \begin{cases}
        0 \mapsto 0 \\
        1 \mapsto 012^n \\
        2 \mapsto 012^{n+1}
    \end{cases}
    & 
    \zeta_n : 
    \begin{cases}
        0 \mapsto 02^n \\
        1 \mapsto 01 \\
        2 \mapsto 02^{n+1}
    \end{cases}
    & \eta : 
    \begin{cases}
        0 \mapsto 02 \\
        1 \mapsto 01 \\
        2 \mapsto 012
    \end{cases}
    \end{array}
\]

Furthermore, there exists a regular language over the alphabet of elementary morphisms that exactly corresponds to the set $\cS_3 = \{\alpha,\beta,\gamma,\delta_n, \zeta_n, \eta \mid n \in \mathbb{N}\}$:
\begin{align*}
\alpha = L_{10}L_{20}
\quad 
& \beta = L_{20}R_{21}L{10}
\quad 
\gamma = L_{10}L_{21}
\\ 
\delta_n = L_{10}R_{12}^nL{21}
\quad 
& 
\zeta_n = L_{10}R_{02}^nL{20}
\quad 
\eta = L_{10}R_{02}L{21}
\end{align*}

For more general alphabets, it is also possible to find regular languages over the alphabet of elementary morphisms, but only for restricted classes of dendric shifts.
Such a regular language is for instance trivially described for Arnoux-Rauzy shifts, and can be obtain from the Rauzy diagrams for regular interval exchange transformations.
The Rauzy diagrams encode the behavior of interval exchanges under the Rauzy induction, and this induction process transfers to the codings by means of elementary morphisms~\citep{Rauzy:79}.
For codings of regular IET, derivation with respect to a word $w$ corresponds to induction on a particular interval associated with $w$, and this induction can be performed by a sequence of Rauzy inductions~\citep{Dolce_Perrin:interval}.
Hence the return morphisms appearing in codings of $k$-regular interval exchange form a regular language over the alphabet of elementary morphisms and an automaton can be deduced from the Rauzy diagram.

This raises the question of finding a regular language for the general case. 
More precisely, if $\cA$ is fixed, is there a regular language over the alphabet of elementary morphisms over $\cA$ that exactly corresponds to dendric return morphisms over $\cA$?

\bibliography{biblio.bib}

\end{document}